\documentclass[a4paper,11pt,leqno]{article}
\usepackage[utf8]{inputenc}

\usepackage[english]{babel}
\usepackage{amsmath,amsfonts,amssymb}
\usepackage{amsthm,ifthen}
\usepackage[mathscr]{eucal}
\renewcommand{\thesection}{\S~\arabic{section}.}
\newcounter{subsec}[section]
\newcommand{\subSect}{\refstepcounter{subsec}%
\arabic{section}.\thesubsec.}
\newtheorem{Teo}{Theorem}

\newtheorem{Remark}{Remark}[section]
\newtheorem{Pro}{Proposition}[section]
\newtheorem{lemma}[Remark]{Lemma}

\renewcommand{\theRemark}{\arabic{section}.\arabic{Remark}}
\renewcommand{\thePro}{\arabic{section}.\arabic{Pro}}

\newtheorem{Teoo}[Remark]{Theorem}
\newtheorem{Deff}[Remark]{Definition}
\newcommand{\var}{\ensuremath{\mathop\mathrm{var}}}
\numberwithin{equation}{section}
\newtheorem{Sti}[Remark]{Stipulation}

\textwidth 150mm
\oddsidemargin 5mm
\textheight 230mm
\topmargin -10mm

\newcommand{\N}[1][N]{\ensuremath{\mathbb{#1}}}
\renewcommand{\cite}[2][]{\ifthenelse{\equal{#1}{}}{[#2]}%
{[#2, #1]}}
\renewcommand{\P}[1][P]{\ensuremath{\mathscr{#1}}}
\newcommand{\K}{\P[K]}
\newcommand{\card}{\mathop{\mathrm{card}}}

\newcommand{\B}{\P[B]}
\newcommand{\Hh}{\P[H]}
\newcommand{\C}{\P[C]}
\newcommand{\F}{\P[F]}

\renewcommand{\theequation}{\arabic{section}.\arabic{equation}}
\begin{document}\large\sloppy
\begin{center}
\Large
Nonlinear Piecewise
Polynomial Approximation and Multivariate $BV$ spaces of a Wiener--L.~Young 
Type. I.\\
Yu. Brudnyi (Technion, Haifa)
\end{center}

\begin{abstract}
The named space denoted by $V_{pq}^k$ consists of $L_q$ functions on $[0,1)^d$ 
of bounded $p$-variation of order $k\in\N$. It generalizes the classical spaces 
$V_p(0,1)$ ($=V_{p\infty}^1$) and $BV([0,1)^d)$ ($V_{1q}^1$ where $q:=\frac 
d{d-1}$) and closely relates to several important smoothness spaces, e.g., to 
Sobolev spaces over $L_p$, $BV$ and $BMO$ and to Besov spaces.

The main approximation result concerns the space $V_{pq}^k$ of 
\textit{smoothness} $s:=d\left(\frac1p-\frac1q\right)\in(0,k]$. It asserts the 
following:

Let $f\in V_{pq}^k$ are of smoothness $s\in(0,k]$ and $N\in\N$. There exist a 
family $\Delta_N$ of $N$ dyadic subcubes of $[0,1)^d$ and a piecewise 
polynomial $g_N$ over $\Delta_N$ of degree $k-1$ such that
\[
 \|f-g_N\|_q\leqslant CN^{-s/d}|f|_{V_{pq}^k}.
\]
This implies the similar results for the above mentioned smoothness spaces, 
in 
particular, solves the going back to the 1967 Birman--Solomyak paper \cite{BS} 
problem of approximation of functions from $W_p^k([0,1)^d)$ in $L_q([0,1)^d)$ 
when ever $\frac kd=\frac1p-\frac1q$ and $q<\infty$.

\textit{Key words}: $N$-term approximation. Piecewise polynomials. Dyadic 
cubes. Spaces of $q$-integrable functions of bounded variation. Sobolev spaces. 
Besov spaces.
\end{abstract}

\section{Introduction}

\subSect{} The present paper is initiated by two results removed from each 
other for more than thirty years. The first, the 1967 pioneering 
paper 
\cite{BS}, asserts the following:
\begin{Teo}\label{TeoA}
Given $f\in W_p^k([0,1)^d)$, $N\in\N$ and $q<\infty$ satisfying 
\[
 \frac kd>\frac1p-\frac1q
\]
there exist a partition $\Delta_N$ of $[0,1)^d$ into at most $N$ dyadic 
subcubes and a piecewise polynomial $g_N$ on $\Delta_N$ of degree $n-1$ such 
that 
\begin{equation}\label{1.1}
\|f-g_N\|_q\leqslant CN^{-k/d} \sup\limits_{|\alpha|=k}
 \|D^\alpha f\|_p;
\end{equation}
the constant $C>0$ is independent of $f$ and $N$ and $C\to\infty$ as $q$ 
tends to the \textit{Sobolev limiting exponent} $q^*:=\left(\frac1p-\frac 
kd\right)^{-1}$.
\end{Teo}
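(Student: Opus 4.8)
The plan is the classical adaptive (stopping–time) scheme: construct $g_N$ by gluing together \emph{local} polynomial approximants over an adaptively chosen dyadic partition. The starting point is the local estimate: for every dyadic cube $Q\subset[0,1)^d$ of side $h_Q$ there is a polynomial $P_Q$ of degree $\leqslant k-1$ with
\[
\|f-P_Q\|_{L_q(Q)}\leqslant c\,h_Q^{\,k+d/q-d/p}\,\mu(Q)^{1/p},\qquad \mu(Q):=\sum_{|\alpha|=k}\int_Q|D^\alpha f|^p\,dx .
\]
This comes from the Bramble--Hilbert/Deny--Lions lemma (on the unit cube $\|\nabla^k\cdot\|_p$ is an equivalent norm on the quotient $W_p^k/\mathcal P_{k-1}$, and $W_p^k\hookrightarrow L_q$ by Sobolev since $k/d>1/p-1/q$), followed by an affine change of variables $Q\to[0,1)^d$; the exponent of $h_Q$ is nonnegative precisely because $\gamma:=q(k-d/p)+d>0$, which is the hypothesis. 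Note $\mu$ is a finite Borel measure with $M:=\mu([0,1)^d)<\infty$. Put $\sigma:=q/p$. For \emph{any} partition $\Delta_N$ of $[0,1)^d$ into dyadic cubes, setting $g_N:=\sum_{Q\in\Delta_N}P_Q\mathbf 1_Q$ and using $q<\infty$,
\[
\|f-g_N\|_q^q=\sum_{Q\in\Delta_N}\|f-P_Q\|_{L_q(Q)}^q\leqslant c\sum_{Q\in\Delta_N}h_Q^\gamma\mu(Q)^\sigma=:c\sum_{Q\in\Delta_N}e(Q).
\]

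Next I would choose $\Delta_N$ adaptively. Since a child $Q'$ of $Q$ satisfies $e(Q')\leqslant 2^{-\gamma}e(Q)$, the property $e(Q)\leqslant\varepsilon$ is inherited by descendants, and $e(Q)\leqslant h_Q^\gamma M^\sigma\to0$; hence the family $\Delta(\varepsilon)$ of \emph{maximal} dyadic cubes with $e(Q)\leqslant\varepsilon$ is a genuine partition of $[0,1)^d$, and every $Q\in\Delta(\varepsilon)$ has $e(Q)\leqslant\varepsilon$ while its parent $\widehat Q$ has $e(\widehat Q)>\varepsilon$. Thus $\sum_{Q\in\Delta(\varepsilon)}e(Q)\leqslant\varepsilon\,\#\Delta(\varepsilon)$, and the crux is to bound $\#\Delta(\varepsilon)$. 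Group the cubes of $\Delta(\varepsilon)$ by side $2^{-j}$: their parents are distinct cubes of the single generation $j-1$, hence pairwise disjoint, and each satisfies $\mu(\widehat Q)>2^{-\beta}\varepsilon^{1/\sigma}2^{j\beta}$ with $\beta:=\gamma/\sigma>0$; since $\sum\mu(\widehat Q)\leqslant M$, there are at most $\min\!\big(2^{jd},\,2^{d+\beta}M\varepsilon^{-1/\sigma}2^{-j\beta}\big)$ of them. Summing over $j\geqslant1$ (a two-piece geometric series whose tail contributes a factor $(1-2^{-\beta})^{-1}$) gives
\[
\#\Delta(\varepsilon)\leqslant C_{d,\beta}\big(M\varepsilon^{-1/\sigma}\big)^{d/(d+\beta)},\qquad C_{d,\beta}\leqslant C(d)\,(1-2^{-\beta})^{-1}.
\]

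Finally, choose $\varepsilon=\varepsilon_N$ with $C_{d,\beta}\big(M\varepsilon_N^{-1/\sigma}\big)^{d/(d+\beta)}=N$, i.e. $\varepsilon_N=M^\sigma(C_{d,\beta}/N)^{\sigma(d+\beta)/d}$, and set $\Delta_N:=\Delta(\varepsilon_N)$, a dyadic partition into at most $N$ cubes. Then
\[
\|f-g_N\|_q^q\leqslant c\,\varepsilon_N N=c\,C_{d,\beta}^{\sigma(d+\beta)/d}\,M^\sigma\,N^{\,1-\sigma(d+\beta)/d}.
\]
The exponent is forced: $\sigma\beta=\gamma$ and $\sigma+\gamma/d=1+qk/d$, so $\sigma(d+\beta)/d=\sigma+\gamma/d=1+qk/d$ and $1-\sigma(d+\beta)/d=-qk/d$. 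Therefore $\|f-g_N\|_q\leqslant C\,M^{1/p}N^{-k/d}\leqslant C\,N^{-k/d}\sup_{|\alpha|=k}\|D^\alpha f\|_p$ with $C=C(d,k,p,q)$, and $C\to\infty$ as $q\to q^\ast=(1/p-k/d)^{-1}$ (where $\gamma,\beta\to0$) through the factor $(1-2^{-\beta})^{-1}$. For the finitely many $N$ with $\varepsilon_N\geqslant M^\sigma$ (equivalently $N$ below a constant) the single-cube approximant $g_N=P_{[0,1)^d}$ already gives the bound, since $N^{-k/d}$ is then bounded above and below by positive constants.

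I expect the only real work to be the scale-by-scale counting of $\#\Delta(\varepsilon)$ — this is where dyadic structure and the hypothesis $\gamma>0$ are genuinely used — together with the exponent bookkeeping that makes the output rate come out exactly $N^{-k/d}$ (it is the two identities $\sigma\beta=\gamma$, $\sigma+\gamma/d=1+qk/d$ that encode why the theorem holds precisely for $k/d>1/p-1/q$). The local estimate and the assembly of $g_N$ are routine.
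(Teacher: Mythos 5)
The paper does not prove Theorem~A; it is quoted as background from the 1967 Birman--Solomyak paper \cite{BS}, and the paper's own machinery (Theorem~2.1, the $V^k_{pq}$ spaces, the graph--theoretic algorithm in Appendix~II) is aimed at the \emph{endpoint} case $q=q^*$ where Theorem~A fails. So there is no in-paper proof to compare against. On its own terms your argument is correct and is essentially the classical adaptive stopping-time proof: rescale the Bramble--Hilbert/Deny--Lions estimate to get the local quantity $e(Q)=h_Q^\gamma\mu(Q)^{q/p}$ with $\gamma=q(k-d/p)+d>0$, observe that $e$ decreases (by a factor $2^{-\gamma}$) along the dyadic tree, take the maximal cubes with $e(Q)\leqslant\varepsilon$, count them scale by scale against the total mass $M=\mu(Q^d)$, and tune $\varepsilon$. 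The exponent bookkeeping ($\sigma\beta=\gamma$, $\sigma+\gamma/d=1+qk/d$) checks out, the small-$N$ case is handled by the trivial partition, and the factor $(1-2^{-\beta})^{-1}$ correctly exhibits the blow-up as $q\to q^*$.

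Two small points of hygiene, neither a real gap. First, the sentence ``their parents are distinct cubes of the single generation $j-1$, hence pairwise disjoint'' is loosely worded: siblings of $\Delta(\varepsilon)$ share a parent, so what you mean is that the \emph{set of distinct parents} is a disjoint subfamily of generation $j-1$ and each parent has at most $2^d$ children in $\Delta(\varepsilon)$; your $\min\!\left(2^{jd},\,2^{d+\beta}M\varepsilon^{-1/\sigma}2^{-j\beta}\right)$ already carries the $2^d$, so the arithmetic is right even though the phrasing isn't. Second, the passage from $M^{1/p}=\left(\sum_{|\alpha|=k}\|D^\alpha f\|_p^p\right)^{1/p}$ to $\sup_{|\alpha|=k}\|D^\alpha f\|_p$ costs a constant depending on $k,d$, which you should make explicit (it is absorbed into $C$). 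It is also worth stating explicitly why this route cannot give the paper's Theorem~\ref{Teoo-1.2}: at $q=q^*$ you have $\gamma=\beta=0$, so $e(Q)$ no longer decays along a branch and the geometric tail sum diverges --- this is exactly the obstruction that the paper's $V^k_{pq}$ framework and the basic-path partition $\B_N$ of Proposition~\ref{Pro-4.3} are designed to circumvent.
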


Using the compactness argument from \cite[\S~5]{BS} one can prove that 
validity of \eqref{1.1} for $q=q^*$ implies (incorrect) compactness of
embedding 
$W_p^k\subset L_{q^*}$. 

This leads to the following:

\textbf{Problem}. Find a generalization of Theorem~\ref{TeoA} for $q=q^*$. 

For the special case $k=p=1$, $d=2$ the answer was given in the 1999 paper 
\cite{CDPX} by A.~Cohen, De Vore, Petrushev and Hong Xu; the case $d>2$ was 
than proved by Wojtashchyk \cite{W}. The result states:
\begin{Teo}\label{TeoB}
Given $f\in W_1^1([0,1)^d)$, $d\geqslant 2$, and $N\in\N$ there exist a 
partition $\Delta_N$ of $[0,1)^d$ into at most $N$ $d$-rings (differences of 
two dyadic subcubes) and a piecewise constant function $g_N$ on $\Delta_N$ such 
that
\begin{equation}\label{1.2}
 \|f-g_N\|_{q^*} \leqslant c(d) \sup\limits_{\|\alpha\|=1} \|D^\alpha f\|_1.
\end{equation}
\end{Teo}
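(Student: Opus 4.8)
The plan is to derive \eqref{1.2} from the scale-invariant Sobolev--Poincar\'e inequality together with the super-additivity of $t\mapsto t^{q^*}$; since \eqref{1.2} carries no decay factor in $N$, no adaptive choice of $\Delta_N$ is needed, and the statement is at bottom the Gagliardo--Nirenberg--Sobolev inequality applied on each cell of a partition and summed.

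The one ingredient is the local estimate. For a $d$-ring $R=Q_1\setminus Q_2$ (with $Q_1\supseteq Q_2$ dyadic subcubes, the degenerate case $Q_2=\emptyset$ meaning $R=Q_1$) write $f_R:=\tfrac1{|R|}\int_R f$; then
\[
 \|f-f_R\|_{L_{q^*}(R)}\le c(d)\,\|\nabla f\|_{L_1(R)},\qquad q^*=\tfrac d{d-1}.
\]
Since $q^*$ is precisely the Sobolev conjugate of $1$ in dimension $d$, this inequality is invariant under dilations of $R$; the point that calls for care is that $c(d)$ may be taken to depend on $d$ alone, uniformly over the class of $d$-rings. This holds because every $d$-ring is a uniform (John) domain whose character is controlled by $d$ --- equivalently, $W_1^1(R)$ carries an extension operator into $W_1^1(Q_1)$ of norm $\le c(d)$, built by reflecting $f$ across each face of $Q_2$ not lying in $\partial Q_1$ and gluing the reflections with a partition of unity at the scale of $Q_2$; the limiting case in which $Q_2$ degenerates to a point recovers the classical inequality on the cube $Q_1$. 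I expect this uniformity to be the only genuinely technical point in \eqref{1.2}.

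Now the assembly. Given any partition $\Delta=\{R_i\}$ of $[0,1)^d$ into $d$-rings, put $g:=\sum_i f_{R_i}\chi_{R_i}$, a piecewise constant (degree-$0$) function on $\Delta$. Then $\|f-g\|_{q^*}^{q^*}=\sum_i\|f-f_{R_i}\|_{L_{q^*}(R_i)}^{q^*}$; applying the local estimate to each summand and then, since $q^*\ge1$, the elementary bound $\sum_i t_i^{q^*}\le(\sum_i t_i)^{q^*}$ for $t_i\ge0$ with $t_i:=\|\nabla f\|_{L_1(R_i)}$, we get
\[
 \|f-g\|_{q^*}^{q^*}\le c(d)^{q^*}\sum_i t_i^{q^*}\le c(d)^{q^*}\Bigl(\sum_i t_i\Bigr)^{q^*}=c(d)^{q^*}\,\|\nabla f\|_1^{q^*},
\]
hence $\|f-g\|_{q^*}\le c(d)\,\|\nabla f\|_1\le c(d)\,d\,\sup_{|\alpha|=1}\|D^\alpha f\|_1$, which is \eqref{1.2} after renaming $c(d)$. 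It remains only to exhibit, for each $N\in\N$, a partition of $[0,1)^d$ into at most $N$ $d$-rings; the dyadic partition at level $j=\lfloor(\log_2 N)/d\rfloor$ --- with $2^{jd}\le N$ cells, each a dyadic cube and hence a degenerate $d$-ring --- does this, and inserting it into the last display finishes the proof. (Peeling dyadic corner cubes off the current cube one at a time gives, if one prefers, partitions into exactly $N$ $d$-rings for every $N$; but \eqref{1.2} does not require it.)

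The construction is easily sharpened: choosing $\Delta_N$ adaptively --- repeatedly subdividing the $d$-ring on which the mass $\|\nabla f\|_{L_1(\cdot)}$ is largest and stopping at $N$ rings --- one gains a decay factor in $N$. Indeed, if the $N$ rings so produced can be made to carry mass $\le c(d)N^{-1}\|\nabla f\|_1$ each, then $\sum_{i\le N}t_i^{q^*}\le c(d)^{q^*}N^{1-q^*}\|\nabla f\|_1^{q^*}$, so $\|f-g_N\|_{q^*}\le c(d)\,N^{-(1-1/q^*)}\|\nabla f\|_1=c(d)\,N^{-1/d}\|\nabla f\|_1$. Showing that such a balanced decomposition is attainable using only $O(N)$ \emph{$d$-rings} --- which is precisely where dyadic cubes alone fall short --- is the substantive content of \cite{CDPX} and \cite{W}; it is not needed for \eqref{1.2} in the form stated here.
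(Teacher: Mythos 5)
Your observation is correct that the printed statement is essentially trivial: with no $N$-dependence on the right-hand side of \eqref{1.2}, the one-cube partition together with the mean-matching constant already satisfies the bound via the Sobolev--Poincar\'e inequality, and your more general argument on an arbitrary partition into $d$-rings (local Sobolev--Poincar\'e, then super-additivity of $t\mapsto t^{q^*}$) is sound, modulo the uniformity of the Poincar\'e constant over $d$-rings, which does hold --- one can verify directly that $Q_1\setminus Q_2$ is a John domain with John constant controlled by $d$ alone, or, closer to the paper's machinery, cover the ring by $O(2^d)$ mutually overlapping cubes as in Lemma~3.10 of Appendix~I and chain Poincar\'e across them (Lemmas~3.8 and 3.9). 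Your ``reflect across the inner faces and glue'' sketch for a uniform extension operator is more delicate near edges and corners of $Q_2$ and is the one place I would not call routine.

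However, the statement in \eqref{1.2} is a misprint: the theorem attributed to [CDPX] and [W], and the one that the paper's own Theorem~1.2 and Remark~1.3 reduce to at $k=p=1$, carries a factor $N^{-1/d}$ on the right-hand side. That factor is what makes the theorem nontrivial, and without it the claim is indeed just Gagliardo--Nirenberg--Sobolev, as you note. The paper does not prove Theorem~B directly --- it is cited as the motivating known result --- and obtains the version with decay as a consequence of Theorem~2.1, via the $V_{pq}^k$ formalism (identifying $W_1^1$ with $V_{1,q^*}^1$ of smoothness $1$) and the adaptive algorithm of \S~4 and Appendix~II, which selects the $O(N)$ dyadic cubes/$d$-rings in a balanced way so that each carries $O(N^{-1})$ of the weight. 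Your closing paragraph correctly identifies this as the substantive content that your argument does not supply. In short: the proof is valid for the statement as printed, but the printed statement is not the intended theorem, and the intended theorem cannot be reached by your non-adaptive route.
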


Hereafter $c(x,y,\ldots)$ denotes a positive constant \textit{depending only} 
on the parameters in the brackets.
\begin{Remark}\label{R1.1}\em
(a) Theorem~\ref{TeoB} is proved for $L_1([0,1)^d)$ functions whose 
distributions derivatives of the first order are bounded Radon measures, see 
the cited papers. However, this more general result follows directly from 
\eqref{1.2}, see sec.~4.7 below.

(b) The partition $\Delta_N$ in Theorem \ref{TeoB} can be replaced by a 
covering of $[0,1)^d$ by at most $N$ dyadic cubes. This result is equivalent to 
the previous, see Remark~4.9 
below.
\end{Remark}

\subSect{} In this paper, we prove a general result on $N$-term piecewise 
polynomial approximation implying as consequences the similar results for a 
good few spaces of smoothness functions.

To illustrate the main result we formulate its consequence giving the solution 
of the Birman--Solomyak problem and the generalization of Theorem~\ref{TeoB} to 
functions of higher smoothness.
\begin{Teoo}\label{Teoo-1.2}
(a) Given $f\in W_p^k([0,1)^d)$, $n\in\N$ and $q^*<\infty$ such that
\[
 \frac kd=\frac1p-\frac1{q^*},\quad
 d\geqslant 2,
\]
there exist a covering $\Delta_N$ of $[0,1)^d$ by at most $N$ dyadic subcubes 
and a family of polynomials $\{P_Q\}_{Q\in \Delta_N}\subset \P_{k-1}$ 
(of degree $k-1$) such that\footnote{hereafter $f_S$ stands for the 
\textit{indicator} (characteristic) function of a set $S$}
\begin{equation}\label{1.3}
\left\|f-\sum\limits_{Q\in\Delta_N} P_Q\cdot 1_Q\right\|_{q^*}
\leqslant c(k,d)
N^{-k/d}\sup\limits_{|\alpha|=k} \|D^\alpha f\|_p.
\end{equation}
(b) For $p:=1$, hence, $q^*=\frac d{d-k}$, the previous holds for $f\in 
L_1$, whose derivatives of order $k$ are bounded Radon 
measures.
\end{Teoo}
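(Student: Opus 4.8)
The plan is to deduce Theorem~\ref{Teoo-1.2} from the general $N$-term approximation theorem announced in the abstract; all of that theorem's proof is the substance of the paper, and the deduction needs only two further ingredients: (i) that $W_p^k([0,1)^d)$, read at the critical exponent $q^*$, sits inside $V_{pq^*}^k$ with its seminorm controlled by $\sup_{|\alpha|=k}\|D^\alpha f\|_p$; and (ii) that the conclusion of the main theorem can be transcribed into the form \eqref{1.3}. Part (b) will then be reduced to (the proof of) part (a) by mollification. Granting the main theorem, the one step that is not mere bookkeeping is ingredient (i), and there the delicate point is the uniformity of the constant in the critical Sobolev--Poincar\'e inequality on a cube: its scale invariance, and --- if the $p$-free constant of \eqref{1.3} is to be read literally --- its behaviour in $p$ along the critical line $\tfrac1p-\tfrac1{q^*}=\tfrac kd$.

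I would set up (i) as follows. For a cube $Q\subset[0,1)^d$ and $1\le r<\infty$ put $E_{k-1}(f;Q)_r:=\inf_{P\in\P_{k-1}}\|f-P\|_{L_r(Q)}$. Choosing for $P$ the averaged Taylor polynomial of $f$ on $Q$ expresses $f-P$ as the image of $D^kf$ under an integral operator on $Q$ with kernel homogeneous of degree $k-d$, so the Sobolev--Poincar\'e (Gagliardo--Nirenberg) inequality gives
\[
 E_{k-1}(f;Q)_{q^*}\le c(k,d)\,\|D^kf\|_{L_p(Q)},\qquad \tfrac1{q^*}=\tfrac1p-\tfrac kd,
\]
the exponent identity $\tfrac d{q^*}-\tfrac dp+k=0$ being precisely what makes this constant independent of the side of $Q$. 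Using the description of $|f|_{V_{pq^*}^k}$ as the supremum of $\bigl(\sum_iE_{k-1}(f;Q_i)_{q^*}^{\,p}\bigr)^{1/p}$ over disjoint families $\{Q_i\}$ of cubes, I raise the displayed estimate to the power $p$, sum, and use $\sum_i\|D^kf\|_{L_p(Q_i)}^p\le\|D^kf\|_p^p$ to get
\[
 |f|_{V_{pq^*}^k}\le c(k,d)\sup_{|\alpha|=k}\|D^\alpha f\|_p.
\]
Since $q^*<\infty$ forces $1\le p<d/k$ (hence $d>k$), and since the smoothness of $V_{pq^*}^k$ equals $s=d\bigl(\tfrac1p-\tfrac1{q^*}\bigr)=k\in(0,k]$, this places $f$ in the hypotheses of the main theorem.

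Next I would apply the main theorem with $q:=q^*$ and $s:=k$: it yields a family $\Delta_N$ of at most $N$ dyadic subcubes and a piecewise polynomial $g_N$ over $\Delta_N$ of degree $k-1$ with $\|f-g_N\|_{q^*}\le CN^{-k/d}|f|_{V_{pq^*}^k}$. Combining this with the previous display, and rewriting $g_N$ in the form $\sum_{Q\in\Delta_N}P_Q\cdot 1_Q$ over a covering of $[0,1)^d$ by $\lesssim N$ dyadic cubes --- the equivalence recorded in Remark~\ref{R1.1}(b) and Remark~4.9 --- yields \eqref{1.3}, the bounded combinatorial overhead being absorbed into $c(k,d)$. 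This proves (a).

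Finally, for (b) I would run the same scheme for $f\in L_1$ with $D^\alpha f$ ($|\alpha|=k$) finite Radon measures, extending the estimate of the second paragraph to
\[
 E_{k-1}(f;Q)_{q^*}\le c(k,d)\sum_{|\alpha|=k}|D^\alpha f|(\widehat Q),
\]
$\widehat Q$ a fixed slight dilate of $Q$; this follows by mollifying $f$ on the single cube $Q$, applying the (already proven) $p=1$ case to the mollifications, and letting the mollification width tend to $0$, Young's inequality bounding $\|D^\alpha f_\varepsilon\|_{L_1(Q)}$ by $|D^\alpha f|(\widehat Q)$. Summing over a disjoint dyadic family and using additivity of the measures $|D^\alpha f|$ gives $f\in V_{1q^*}^k$ with $|f|_{V_{1q^*}^k}\le c(k,d)\sup_{|\alpha|=k}|D^\alpha f|([0,1)^d)$, whereupon (b) follows from the main theorem exactly as (a) did. (Alternatively one can mollify $f$ globally, apply (a) to $f_\varepsilon$ with an $\varepsilon$-uniform bound, and pass to a weak-$L_{q^*}$ limit of the $g_N^\varepsilon$ along a subsequence on which membership in $\Delta_N^\varepsilon$ has stabilized for each of the countably many dyadic subcubes, using weak lower semicontinuity of $\|\cdot\|_{q^*}$; this is the soft route indicated for $k=1$ in Remark~\ref{R1.1}(a) and sec.~4.7.)
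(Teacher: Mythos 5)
Your proposal is correct and follows essentially the same route as the paper: deduce the theorem from the main result (Theorem~\ref{Teo-2.1}) applied at smoothness $s=k$, $q=q^*$, once it is known that $|f|_{V_{pq^*}^k}$ is controlled by the Sobolev (resp.\ $BV^k$) seminorm. The one place you diverge is in how that embedding is established: the paper simply cites the two-sided equivalence $|f|_{V_{pq^*}^k}\approx|f|_{W_p^k}$ (resp.\ $|f|_{BV^k}$), inequality \eqref{5.3}, from Theorems~4 and~12 of \cite[\S~4]{B-71}, whereas you prove the needed one-sided inequality directly from the scale-invariant critical Sobolev--Poincar\'e estimate $E_{k-1}(f;Q)_{q^*}\le c(k,d)\|D^kf\|_{L_p(Q)}$ on a single cube, raised to the $p$-th power and summed over a disjoint family. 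Your version is more self-contained and also more economical, since only the forward inequality is needed, not the equivalence; and your explicit remark about the exponent identity $\tfrac d{q^*}-\tfrac dp+k=0$ making the constant cube-size independent is exactly the point that the paper leaves implicit in its citation. The caveat you raise about $p$-uniformity of the constant is also warranted: the paper states $c=c(k,d,q^*)$ in \eqref{2.6} but $c(k,d)$ in \eqref{1.3}, and since $q^*$ is determined by $(k,d,p)$, this silently asserts a $p$-independence that the cited theorems would have to furnish. For part~(b), your cube-by-cube mollification argument (with a bounded-overlap family of dilates $\widehat Q$) reaches the same inequality $|f|_{V_{1q^*}^k}\le c(k,d)|f|_{BV^k}$ that the paper again simply cites; the compactness alternative you sketch is closer in spirit to the passage from $C^\infty$ to general $f$ carried out in sec.~4.4--4.5 of the paper, and is a legitimate soft route as well.
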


The associated seminorm of the latter function space denoted by 
$BV^k([0,1)^d)$ is given by
\begin{equation}\label{1.4}
|f|_{BV^k}:=\sup\limits_{|\alpha|=k} \var \limits_{[0,1)^d} (D^\alpha 
f).
\end{equation}
\begin{Remark}\label{Remark1.3}\em
The result can be reformulated equivalently with the covering 
$\Delta_N$ substituted for a \textit{partition} of $[0,1)^d$ into at most 
$N$ $d$-rings. 
\end{Remark}

\subSect{} The formulated aim will be achieved by using the $BV$ 
spaces of integrable on $[0,1)^d$ functions of arbitrary smoothness introduced 
in \cite[sec.~4.5]{B-71}. To motivate definition of the corresponding space 
denoted by $V_{pq}^k$ we begin with a model case, the Wiener--L.~Young 
space $V_p$, 
whose associated seminorm we present in the following equivalent form:
\begin{equation}\label{1.6}
 \var{}_p f:=\sup\limits_\Delta\left(\sum\limits_{I\in\Delta} osc
(f;I)^p\right)^{1/p}
\end{equation}
where $\Delta$ runs over disjoint families of intervals $I=[a,b)\subset [0,1)$ 
and
\begin{equation}\label{1.7}
 osc (f;I):=\sup\limits_{x,y\in I} |f(x)-f(y)|.
\end{equation}

To obtain the required seminorm of $V_{pq}^k$ denoted by $\var_p^k(\cdot; L_q)$ 
we replace in \eqref{1.6} intervals by cubes $Q\subset [0,1)^d$, in 
\eqref{1.7} the first difference by the $k$-th one, and the uniform norm by 
$L_q$ norm. This gives the following:
\begin{Deff}\label{Deff1.4}
The seminorm $f\mapsto\var_p^k(f;L_q)$ is a function on $L_q([0,1)^d)$ given by
\begin{equation}\label{1.8}
 \var{}_p^k (f;L_q):= \sup\limits_{\Delta} \left\{ \sum\limits_{Q\in\Delta}
 osc_q^k (f;Q)^p\right\}^{1/p}
\end{equation}
where $\Delta$ runs over disjoint families of cubes $Q\subset[0,1)^d$ and
\begin{equation}\label{1.9}
osc_q^k (f;Q):= \sup\limits_{h\in \N[R]^d}
 \|\Delta_h^k\|_{L_q(Q_{kh})}; 
\end{equation}
here $\Delta_h^k:=\sum\limits_{j=0}^k (-1)^{n-j} 
\begin{pmatrix}
 k\\j
\end{pmatrix}
\delta_{jh}$ and
$Q_{kh}:=\{x\in \N[R]; x+jh\in Q,j=0,1,\ldots,k\}$.
\end{Deff}

The important characteristic of the  space $V_{pq}^k$ is its 
\textit{smoothness} introduced by the following:
\begin{Deff}\label{Deff1.5}
Smoothness of the space $V_{pq}^k$ denoted by $s(V_{pq}^k)$ is a real number 
given by 
\begin{equation}\label{1.10}
 s(V_{pq}^k):=d\left(\frac1p-\frac1q\right).
\end{equation}
\end{Deff}

This concept closely relates to differential and approximation 
properties of $V_{pq}^k$ functions. In fact, a function with $s(V_{pq}^k)=s$ 
belongs to the Taylor class $T_q^s(x)$ a.e. if $0<s<k$ and $t_q^k(x)$ a.e. 
if $s=k$, see \cite[\S~2]{B-94}.

Moreover, its order of $N$-term approximation in $L_q([0,1)^d)$ by piecewise 
polynomial is, as we will see, $N^{-s/d}$ for $0<s\leqslant k$.

In particular, proof of Theorem~\ref{Teoo-1.2} is based on the equality
\[
 V_{pq^*}^k=W_p^k
\]
and the fact that 
\[
 s(V_{pq^*}^k):=d\left(\frac1p-\frac1{q^*}\right)=k
\]
that allow to derive it directly from the corresponding result for 
$V_{pq^*}^k$.

\subSect{} The outline of the paper is the following:

The main result, Theorem~2.1, is formulated in \S~2 along with its consequences 
describing the similar approximation results for classical smoothness spaces 
(one of them is Theorem~\ref{Teoo-1.2} formulated above)

In \S~3, we prove properties of $V_{pq}^k$ spaces essential for the proof 
of Theorem~2.1. The first asserts that a function $f\in V_{pq}^k$ can be weakly 
approximated  in $L_q$ by $C^\infty$ functions whose $(k,p)$-variations 
bounded by $\var_p^k(f;L_q)$. The special case for the space $BV([0,1)^d)$ 
($=V_{11}^1$), see, e.g., in \cite[Thm.~5.3.3]{Z}.

The second results estimates polynomial approximation of order $k-1$ for $f\in 
V_{pq}^k(Q'\setminus Q'')$ via its $(k,p)$-variation; here $Q''\subsetneqq Q'$ 
are dyadic cubes.

The latter result essentially uses a covering lemma proved in collaboration 
with V.~Dolnikov; its proof is presented in Appendix~I.

For $k=1$ (approximation by constants) the result was proved in \cite{CDPX} for 
$d=2$ and in \cite{W} for $d>2$ by two different methods that do not work 
for 
$k>1$.

The main result, Theorem 2.1 is proved in \S~4 and its consequences for the 
classical smoothness spaces in \S~5 (one of them, Theorem~\ref{Teoo-1.2}, had 
been yet formulated). The approximation algorithm used in construction of the 
family of dyadic cubes for Theorem~\ref{Teo-2.1} is presented in Appendix~II.

Its primary version was developed to prove the similar to 
Theorem~\ref{Teoo-1.2} result for $N$-term approximation of functions from 
Besov spaces by $B$--splines; the result announced in \cite{BI-87} and proved in 
\cite{BI-92}.

The special case of Theorem~2.1 for functions with absolutely continuous 
$(k,p)$-variation proved many times ago and announced in \cite{B-2002}. This 
result allows to prove all consequences of Theorem~2.1 presented in \S~2 but 
only much 
later the author understood how to derive Theorem~2.1 from this special case, 
see Section~4.7.\smallskip

\noindent {\em Acknowledgement}. I am very grateful to my friends Irina 
Asekritova, Vladimir Dolnikov and Natan Kruglyak for their lively interest to 
and useful discussions of this paper.

\section{Formulations of the Main Results}

Throughout the paper we use the following notions and notations.

A \textit{cube} denoted by $Q$, $Q'$, $K$ etc. is a set of $\N[R]^d$ homothetic 
to the (half--open) \textit{unit cube}
\begin{equation}\label{2.1}
  Q^d:=[0,1)^d.
\end{equation}
$\P[D](Q)$ denotes the family of \textit{dyadic cubes} of $Q$, i.e., cubes of 
the form
\begin{equation}\label{2.2}
 K:=2^{-j}(Q+\alpha)
\end{equation}
where $j\in\N[Z]_+:=\{0,1,2,\ldots,\}$ and $\alpha\in\N[Z]^d$.

Further, $\P_l=\P_l(\N[R]^d)$ is \textit{the space of polynomials in} 
$x:=(x_1,x_2,\ldots,x_d)$ \textit{of degree} $l$ while $\P_l(\Delta)$ denotes 
the \textit{space of piecewise polynomials on a set $\Delta \subset D(Q)$ of 
degree $l$.} 

In other words, 
\begin{equation}\label{2.3}
 \P_l(\Delta):=\{f\in L_\infty(Q);
 f=\sum\limits_{K\in\Delta} P_K\cdot 1_K\}
\end{equation}
where $\{P_K\}_{K\in\Delta}\subset \P_l$.
\begin{Teoo}\label{Teo-2.1}
(a) Let $f\in V_{pq}^k(Q^d)$ where the smoothness $s:=s(V_{pq}^k)$, 
see \eqref{1.10}, and 
$d$, $p,q$ be such that 
\begin{equation}\label{2.4}
d\geqslant2,\,\, 0<s\leqslant k\text{ and } 1\leqslant p<q<\infty. 
\end{equation}
Given $N\in\N$ there exist a covering $\Delta_N\subset \P[D](Q^d)$ of $Q^d$ 
with 
card $\Delta_N\leqslant N$ and $g_N\in \P_{k-1}(\Delta_N)$ such that 
\begin{equation}\label{2.5}
 \|f-g_N\|_q\leqslant c(d)N^{-s/d} |f|_{V_{pq}^k}.
\end{equation}
The same is true for $q=\infty$, i.e., for $s/d=1/p$, if $f$ is uniformly 
continuous on $Q^d$.

(b) The covering $\Delta_N$ can be replaced by a 
partition of $Q^d$ into at most $N$ dyadic $d$-rings.
\end{Teoo}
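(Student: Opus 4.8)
\textbf{Proof strategy for Theorem~\ref{Teo-2.1}.}

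The plan is to reduce the general case to the special case of functions with absolutely continuous $(k,p)$-variation (announced in \cite{B-2002}) via the weak approximation property described in \S~3, and then to prove the latter special case by a greedy adaptive partitioning argument. First I would invoke the first result of \S~3: given $f \in V_{pq}^k(Q^d)$, there is a sequence of $C^\infty$ functions $f_m$ converging to $f$ weakly in $L_q$ with $\var_p^k(f_m; L_q) \leqslant \var_p^k(f; L_q) = |f|_{V_{pq}^k}$. Since $C^\infty$ functions have absolutely continuous $(k,p)$-variation, the special case applies to each $f_m$, yielding coverings $\Delta_N^{(m)}$ and piecewise polynomials $g_N^{(m)}$ with $\|f_m - g_N^{(m)}\|_q \leqslant c(d) N^{-s/d} |f_m|_{V_{pq}^k}$. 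A compactness argument — passing to a subsequence so that the combinatorial data of $\Delta_N^{(m)}$ stabilizes (there are only finitely many coverings of $Q^d$ by $N$ dyadic cubes of bounded generation, so one must first control the generation, or argue with a diagonal/weak-$*$ limit of the $g_N^{(m)}$ on a fixed partition) — produces the desired $\Delta_N$ and $g_N$ for $f$ itself, using lower semicontinuity of $\|\cdot\|_q$ under weak convergence. This reduction is exactly what Section~4.7 is advertised to carry out.

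For the special case (absolutely continuous variation), the core is an adaptive stopping-time construction of the dyadic cube family, governed by the set function $Q \mapsto osc_q^k(f;Q)^p$, which is (super)additive over disjoint subcubes by definition of the variation. The second result of \S~3 provides the local estimate: on a $d$-ring $Q' \setminus Q''$ one can approximate $f$ by a polynomial of degree $k-1$ with $L_q$-error controlled by the $(k,p)$-variation of $f$ on that ring, this being where the Brudnyi--Dolnikov covering lemma (Appendix~I) enters. I would run the algorithm of Appendix~II: starting from $Q^d$, repeatedly subdivide a dyadic cube whenever its local oscillation functional exceeds a threshold $\varepsilon^p$ chosen so that the total number of selected cubes is $\leqslant N$; the superadditivity of $osc_q^k(\cdot)^p$ bounds the count by $|f|_{V_{pq}^k}^p / \varepsilon^p$, forcing $\varepsilon \sim N^{-1/p} |f|_{V_{pq}^k}$. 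On each terminal cube (or $d$-ring) the local polynomial approximation contributes error $\lesssim osc_q^k$ on that piece; summing the $q$-th powers and using $q > p$ together with the constraint $s/d = 1/p - 1/q$ to convert the $\ell^p$ bound on oscillations into an $\ell^q$ bound on errors gives, via Hölder/the embedding $\ell^p \hookrightarrow \ell^q$ and the cardinality bound, precisely $\|f - g_N\|_q \leqslant c(d) N^{-s/d} |f|_{V_{pq}^k}$. The exponent arithmetic $N^{-1/p} \cdot N^{(1/p - 1/q)} = N^{-1/q}$, combined with geometric summation over scales, is what produces $N^{-s/d}$.

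The main obstacle I expect is the local polynomial approximation on $d$-rings in the regime $p < q$ with $q$ possibly large (or $\infty$) — controlling the $L_q$-norm of $f$ minus its best polynomial by the $k$-th order $L_q$-oscillation \emph{integrated against the variation measure} over a ring, uniformly in the geometry of the ring. Standard Whitney-type estimates handle a single cube, but a $d$-ring $Q' \setminus Q''$ with $Q''$ an arbitrarily deep dyadic descendant of $Q'$ is genuinely harder: the annular region can be long and thin, and one needs a chaining argument that pays only logarithmically (or not at all, after the covering lemma) in the ratio of side lengths. This is precisely why the paper isolates the Brudnyi--Dolnikov covering lemma and why, as the excerpt notes, the $k=1$ methods of \cite{CDPX} and \cite{W} do not generalize. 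Part (b), converting the covering by $N$ cubes into a partition into $N$ $d$-rings, is then a combinatorial post-processing step: given a covering, one orders the cubes and replaces each by its set-difference with the union of previously listed cubes that it contains, checking that this yields a partition into $d$-rings without increasing the count or the approximation error (the piecewise polynomial simply restricts to each ring), as indicated in Remark~\ref{Remark1.3} and sec.~4.9.
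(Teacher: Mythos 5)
Your high-level plan matches the paper's architecture (regularize to a $C^\infty$ function via the weak approximation of \S~3, run an adaptive algorithm on the dyadic tree, use the $d$-ring polynomial estimate and the Brudnyi--Dolnikov covering lemma, then derive (b) by post-processing). But your description of how the algorithm controls $\card\Delta_N$ contains a genuine gap, and your reduction step is not the one the paper uses and does not straightforwardly close.

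First, the cardinality bound. You propose to ``repeatedly subdivide a dyadic cube whenever its local oscillation functional exceeds a threshold $\varepsilon^p$'' and claim superadditivity of $osc_q^k(\cdot)^p$ bounds the count by $|f|_{V^k_{pq}}^p/\varepsilon^p$. This fails on two counts. (i) The superadditive set function is $W(S):=\var_p^k(f;S;L_q)^p$ on $S\in A(\P[D])$, \emph{not} the single-cube oscillation: Proposition~\ref{Pro-3.1} is a statement about the variation over disjoint sets, and indeed the weight \eqref{4.7} used by the algorithm is the variation, which must make sense on $d$-rings like $H_B\setminus T_B$, not only on cubes. (ii) More importantly, the set $G_N=\{Q:W(Q)\geqslant N^{-1}\}$ is a rooted subtree of $\P[D]$ that can contain arbitrarily many cubes (a long tower), so the terminal cubes of a naive ``stop when small'' subdivision are not bounded by $N$. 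Superadditivity only controls the number of \emph{pairwise disjoint} bad cubes. The whole point of Appendix~II is to partition $G_N\setminus\{Q^d\}$ into $\leqslant 3N+1$ basic paths $B$ (Proposition~\ref{Pro-4.3}), each cut so that $W(H_B\setminus T_B)<N^{-1}$, and then to place into $\Delta_N$ only the heads $H_B$, tails $T_B$, and $\P[D]_1(T_B)$, with a single polynomial on the long $d$-ring $H_B\setminus T_B$; that is what yields $\card\Delta_N\leqslant c(d)N$ in \eqref{4.22}. Without the path decomposition and the $d$-ring device, the cardinality estimate collapses.

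Second, the reduction from general $f$ to smooth $f$. You propose to extract a subsequence of coverings $\Delta_N^{(m)}$ and pass to a limit, and correctly flag that this requires controlling the generation of the selected cubes, which is not given by the construction. The paper's proof in \S~4.5 sidesteps this entirely: for a slightly shrunken cube $Q\subset Q^d$ it applies Proposition~\ref{Pro-4.5} to a single regularizer $f_\varepsilon\in C^\infty$ (Theorem~\ref{Teo-3.3} guarantees $\var_p^k(f_\varepsilon;Q;L_q)\leqslant|f|_{V^k_{pq}}$), transports the covering and piecewise polynomial back to $Q^d$ via a homothety $h$, and bounds $\|f-g_N\|_q$ by the triangle inequality; letting $\varepsilon,\delta\to0$ only refines the error bound, not the combinatorial data. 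If you want to keep your diagonal/weak-$*$ idea, you would have to supply the missing compactness (e.g.\ a priori bound on the generation or a weak limit of the $g_N^{(m)}$ on a fixed master partition), which the paper never needs.

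Your remarks on the $d$-ring estimate (Theorem~\ref{Teo-3.7}) and its dependence on the Brudnyi--Dolnikov covering lemma, and on converting a covering into a partition by $d$-rings for part~(b), are essentially correct and aligned with the paper's argument.
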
  
\begin{Sti}\label{Sti-2.2}\em
We delete the symbol $Q^d$ from the next notations writing, e.g., $\P[D]$, 
$V_{pq}^k$, $L_q$ instead of $\P[D](Q^d)$, $V_{pq}^k(Q^d)$, $L_q(Q^d)$ if it 
does 
not lead to misunderstanding.
\end{Sti}

The first consequence of the main result, Theorem~\ref{Teoo-1.2}, immediately 
follows from Theorem~\ref{Teo-2.1}(a) and the inequality
\begin{equation}\label{2.6}
 \var\nolimits_p^k(f;L_{q^*})\leqslant c
 \left\{ 
 \begin{array}{cll}
  |f|_{W_p^k} & \text{if} & p>1\\
  |f|_{BV^k} & \text{if} & p=1,
 \end{array}
 \right.
\end{equation}
here $c=c(k,d,q^*)$ and $q^*:=\left(\frac kd-\frac1p\right)^{-1}$.

This and analogous embedding results for Besov spaces are presented in \S~5.

Let now $\dot B_p^{\lambda \theta}:=\dot B_p^{\lambda \theta}(Q^d)$ be the 
homogeneous Besov space defined by the seminorm
\begin{equation}\label{2.7}
 |f|_{B_p^{\lambda\theta}}:=
 \left\{
 \int\limits_0^1\left(\frac{\omega_k(t;f;L_p)}{t^\lambda}
 \right)^\theta\frac{dt}t\right\}^{1/\theta} 
\end{equation}
where $k=k(\lambda):=\min\{n\in\N;n>\lambda\}$ and $\omega_k(\cdot; f; L_p)$ is 
the $k$-th modulus of continuity of $f\in L_p$, see e.g., \cite{N} or 
\cite{DL} for its 
definition.

The first result concerns the ``diagonal'' Besov space $\dot B_p^\lambda:= \dot 
B_p^{\lambda p}$, $1\leqslant p<\infty$.
\begin{Teoo}\label{Teo-2.3}
Let $f\in \dot B_p^\lambda$ and $d,p,q,\lambda$ be such that 
\[
 d\geqslant2 , 1\leqslant p<q<\infty \,\,\text{ and }\,\,\frac\lambda 
d=\frac1p-\frac1q.
\]

Given $N\in\N$ 
there exist a covering $\Delta_N\subset \P[D]$ of $Q^d$ by at most $N$  cubes 
and 
$g_N\in\P_{k-1}(\Delta_N)$ such that
\[
 \|f-g_N\|_q\leqslant c(k,d) N^{-\lambda/d} |f|_{B_p^\lambda}.
\]
\end{Teoo}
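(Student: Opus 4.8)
The plan is to derive Theorem~\ref{Teo-2.3} from the main result, Theorem~\ref{Teo-2.1}(a), exactly as Theorem~\ref{Teoo-1.2} was obtained from it: namely, by establishing a one-line embedding of the diagonal Besov space into the Wiener--L.~Young space of the appropriate smoothness. Concretely, I would prove the inequality
\begin{equation}\label{embedding-plan}
 \var\nolimits_p^k(f;L_q)\leqslant c(k,d)\,|f|_{\dot B_p^\lambda}
\end{equation}
whenever $1\leqslant p<q<\infty$, $\lambda=k(\lambda)$ is not an integer (so $k=k(\lambda)=\lceil\lambda\rceil$ is the order used in both definitions), and $\tfrac{\lambda}{d}=\tfrac1p-\tfrac1q$. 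Once \eqref{embedding-plan} is in hand, note that the smoothness parameter of $V_{pq}^k$ is precisely $s(V_{pq}^k)=d(\tfrac1p-\tfrac1q)=\lambda\in(0,k]$, so $f\in V_{pq}^k$ with $|f|_{V_{pq}^k}\leqslant c\,|f|_{\dot B_p^\lambda}$, and Theorem~\ref{Teo-2.1}(a) immediately yields the covering $\Delta_N$ and $g_N\in\P_{k-1}(\Delta_N)$ with $\|f-g_N\|_q\leqslant c(d)N^{-\lambda/d}|f|_{V_{pq}^k}\leqslant c(k,d)N^{-\lambda/d}|f|_{\dot B_p^\lambda}$. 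This is exactly the claimed estimate.

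The substance therefore lies in \eqref{embedding-plan}, and the natural route is through the $osc_q^k$ functional. For a disjoint family $\Delta$ of cubes $Q$, I would estimate $osc_q^k(f;Q)=\sup_h\|\Delta_h^k f\|_{L_q(Q_{kh})}$ in terms of the $L_p$-modulus of smoothness restricted to (a slight enlargement of) $Q$. The key classical tool is a local embedding inequality of Sobolev--Besov type on a cube $Q$ of sidelength $\ell(Q)$: for $t\lesssim\ell(Q)$,
\[
 \|\Delta_h^k f\|_{L_q(Q_{kh})}\leqslant c\,\ell(Q)^{-d(1/p-1/q)}\!\int_{0}^{\ell(Q)}\!\Bigl(\frac{\omega_k(u;f;L_p(Q))}{u^{\lambda}}\Bigr)^{p}\frac{du}{u}\cdot\bigl(\text{suitable power of }|h|\bigr),
\]
or, more cleanly, Marchaud/Ul'yanov-type inequalities giving $osc_q^k(f;Q)^p\leqslant c\int_0^{\ell(Q)}\bigl(u^{-\lambda}\omega_k(u;f;L_p(Q))\bigr)^p\,\tfrac{du}{u}$. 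Summing this over the disjoint family $\Delta$ and using that $\omega_k(u;f;L_p)^p$ controls the sum of the local $\omega_k(u;f;L_p(Q))^p$ up to a bounded overlap factor (this is where the disjointness of $\Delta$ and a covering/finite-intersection argument for the translates $x+jh$ enter) gives $\sum_{Q\in\Delta}osc_q^k(f;Q)^p\leqslant c\int_0^1\bigl(u^{-\lambda}\omega_k(u;f;L_p)\bigr)^p\,\tfrac{du}{u}=c\,|f|_{\dot B_p^\lambda}^p$, which is \eqref{embedding-plan}. The borderline case $\lambda=k$ (i.e. $q=\infty$, $s/d=1/p$) is handled separately via the uniform-continuity hypothesis in Theorem~\ref{Teo-2.1}(a), using that $\dot B_p^{k}$ with $\tfrac{k}{d}=\tfrac1p$ embeds into the space of uniformly continuous functions.

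The main obstacle I anticipate is making the local-to-global passage quantitatively correct: the pointwise Sobolev embedding on a single cube $Q$ loses a factor $\ell(Q)^{-d(1/p-1/q)}$, and one must check that the choice of exponents $\tfrac{\lambda}{d}=\tfrac1p-\tfrac1q$ makes these scaling factors cancel exactly against the $\omega_k$-normalization so that the constant $c$ depends only on $k$ and $d$ and not on the sizes of the cubes in $\Delta$ nor on $p,q$ individually. A secondary technical point is controlling the overlap of the difference sets $Q_{kh}$ and the translated copies $Q+jh$ when summing over a disjoint but otherwise arbitrary family $\Delta$ — here one restricts attention, without loss of generality, to families of dyadic cubes (or cubes with comparable sizes within dyadic levels) and sums over levels, absorbing a geometric series. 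Modulo these bookkeeping issues, which are of the same flavour as the proof of \eqref{2.6} announced for the Sobolev case in \S~5, the argument is routine; the genuinely new content is Theorem~\ref{Teo-2.1} itself, on which this corollary is built.
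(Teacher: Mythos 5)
Your plan is essentially the paper's own proof: reduce to the embedding $|f|_{V_{pq}^k}\leqslant c\,|f|_{\dot B_p^\lambda}$ with $s(V_{pq}^k)=\lambda$ and then invoke Theorem~\ref{Teo-2.1}(a); the paper establishes the embedding exactly the way you outline, via a local estimate $E_k(f;Q;L_q)\leqslant c(d,\lambda,q)\,|f|_{B_p^\lambda(Q)}$ (inequality~\eqref{5.7}, proved in Remark~\ref{Remark-5.1} by a nonincreasing-rearrangement estimate plus the Hardy inequality rather than a Marchaud/Ul'yanov chain, but to the same effect) followed by summing over a disjoint family and invoking a subadditivity lemma for local Besov seminorms (Lemma~2 of \cite[\S~5]{B-94}), which is precisely the local-to-global bounded-overlap step you flag as the main bookkeeping obstacle. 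One small slip: your remark about handling the borderline case $\lambda=k$, $q=\infty$ inside this theorem is misplaced --- Theorem~\ref{Teo-2.3} by hypothesis has $q<\infty$, and the $q=\infty$ endpoint is the separate Theorem~\ref{Teo-2.4}, proved in the paper by a different (interpolation) argument.
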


The second result concerns approximation in the uniform norm ($q=\infty$).
\begin{Teoo}\label{Teo-2.4}
 Let $f\in \dot B_p^{\lambda1}$ and 
\[
 d\geqslant2, \quad
 1\leqslant p<\infty,
 \quad
 \frac\lambda d=\frac1p.
\]

Given $N\in\N$ there exist $\Delta_N\subset \P[D]$ satisfying the condition of 
Theorem~\ref{Teo-2.3} and $g_N\in\P_{k-1}(\Delta_N)$ such that 
\[
 \|f-g_N\|_\infty\leqslant c(\lambda,d) N^{-\lambda/d} |f|_{B_p^{\lambda1}}.
\]
\end{Teoo}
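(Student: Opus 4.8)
The plan is to derive Theorem~\ref{Teo-2.4} from the main result (Theorem~\ref{Teo-2.1}) together with an embedding of the Besov space $\dot B_p^{\lambda1}$ into the appropriate variation space $V_{pq}^k$, exactly in the spirit of how Theorem~\ref{Teoo-1.2} is obtained via \eqref{2.6}. Concretely, under the hypothesis $\lambda/d = 1/p$, I want to show
\begin{equation*}
\var\nolimits_p^k(f;L_\infty)\leqslant c(\lambda,d)\,|f|_{\dot B_p^{\lambda1}},
\end{equation*}
so that $f\in V_{p\infty}^k$ with $s(V_{p\infty}^k)=d/p=\lambda$. Since $\lambda/d=1/p$ forces $s=\lambda\leqslant k$ precisely when $k=k(\lambda)=\min\{n\in\N:n>\lambda\}$ — which is the value of $k$ used in the statement — the endpoint case $q=\infty$, $s/d=1/p$ of Theorem~\ref{Teo-2.1}(a) applies, provided $f$ is uniformly continuous. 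Uniform continuity is automatic: with $\lambda>d/p\cdot\mathbf{1}$... more precisely $\omega_k(t;f;L_p)\lesssim t^{\lambda}$ together with $\lambda>0$ and the summability at $0$ of $\omega_k(t;f;L_p)/t^\lambda$ gives, by a standard Besov embedding $\dot B_p^{\lambda1}\hookrightarrow C(Q^d)$ when $\lambda = d/p$, that $f$ has a (uniformly) continuous representative. Then Theorem~\ref{Teo-2.1}(a) yields the covering $\Delta_N\subset\P[D]$ with $\card\Delta_N\leqslant N$ and $g_N\in\P_{k-1}(\Delta_N)$ with $\|f-g_N\|_\infty\leqslant c(d)N^{-\lambda/d}|f|_{V_{p\infty}^k}\leqslant c(\lambda,d)N^{-\lambda/d}|f|_{\dot B_p^{\lambda1}}$, which is the assertion.

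The key steps, in order, are: (1) prove the embedding $\dot B_p^{\lambda1}\hookrightarrow V_{p\infty}^k$ with control of the seminorm; (2) record the elementary consequence that $s(V_{p\infty}^k)=\lambda\in(0,k]$ under the stated relation between $\lambda$, $p$, $d$, and the choice $k=k(\lambda)$; (3) invoke the endpoint case of Theorem~\ref{Teo-2.1}(a), noting its uniform-continuity hypothesis is met; (4) assemble the constants. For step~(1), the natural route is: fix a disjoint family $\Delta$ of cubes $Q\subset Q^d$; for each $Q$ of side $\ell(Q)$ bound $osc_\infty^k(f;Q)=\sup_h\|\Delta_h^k f\|_{L_\infty(Q_{kh})}$ by the $L_\infty$-modulus $\omega_k(\ell(Q);f;L_\infty(Q))$, then convert to the $L_p$-modulus on a slightly dilated cube. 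The cleanest way to handle the passage from a single cube to the sum over $\Delta$ is to use the integral representation
\begin{equation*}
f - (\text{local polynomial on }Q) = \sum_{m\geqslant m_0} (\text{telescoping dyadic corrections}),
\end{equation*}
i.e. to bound $osc_\infty^k(f;Q)$ by a sum over dyadic scales $2^{-m}\leqslant \ell(Q)$ of local $L_p$-oscillations at scale $2^{-m}$, weighted by $2^{md/p}$ (the $L_p\to L_\infty$ Nikolskii factor on a cube of side $2^{-m}$). Raising to the power $p$, summing over the \emph{disjoint} family $\Delta$, and using disjointness to sum the local $L_p$-oscillations into global $\omega_k(2^{-m};f;L_p)^p$ terms, one gets
\begin{equation*}
\var\nolimits_p^k(f;L_\infty)^p \leqslant c\sum_{m} \bigl(2^{md/p}\,\omega_k(2^{-m};f;L_p)\bigr)^p\cdot 2^{-?}
\end{equation*}
— and here the exponent bookkeeping closes exactly because $d/p=\lambda$, turning the right side into $c\sum_m\bigl(2^{m\lambda}\omega_k(2^{-m};f;L_p)\bigr)^p$ times a geometric factor, which is comparable to $|f|_{\dot B_p^{\lambda p}}^p\leqslant|f|_{\dot B_p^{\lambda1}}^p$; the $\ell^1$ versus $\ell^p$ in the Besov index is what makes the constant finite at the endpoint rather than merely the $\ell^p$ sum.

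The main obstacle I anticipate is step~(1), specifically the uniform (in the disjoint family $\Delta$) control when converting per-cube uniform oscillations into a summable quantity: one must avoid overcounting when many cubes of $\Delta$ sit inside one dyadic cube at a coarse scale, and one must correctly pay the Nikolskii factor $2^{md/p}$ only against the \emph{correct} number of cubes at scale $2^{-m}$. The disjointness of $\Delta$ is essential and must be used carefully — the natural bookkeeping is to assign to each $Q\in\Delta$ the scales $2^{-m}$ below its side length and, at each fixed $m$, note that the dilated cubes of side $\sim 2^{-m}$ meeting a fixed $Q\in\Delta$ have bounded overlap across $\Delta$ because $\ell(Q)\gtrsim 2^{-m}$. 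A secondary, more routine point is the passage from $L_\infty$-modulus to $L_p$-modulus on a cube, which is the standard Nikolskii inequality for $k$-th differences together with a dilation; and the verification that the endpoint case of Theorem~\ref{Teo-2.1}(a) is exactly the case $q=\infty$, $s=d/p=\lambda\leqslant k$, which is immediate from the definitions once $k=k(\lambda)$ is fixed as in the hypothesis.
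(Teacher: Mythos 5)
Your overall strategy is the same as the paper's: reduce Theorem~\ref{Teo-2.4} to the endpoint case $q=\infty$ of Theorem~\ref{Teo-2.1}(a) by proving the embedding $\dot B_p^{\lambda 1}(Q^d)\subset V_{p\infty}^k(Q^d)\bigcap C(\N[R]^d)$, and you correctly observe that uniform continuity of $f$ is furnished by the classical embedding $B_p^{\lambda1}\subset C$ at $\lambda=d/p$. Where you part company with the paper is in \emph{how} the embedding into $V_{p\infty}^k$ is established. The paper reduces to $k=d+1$, proves the embedding $\dot B_p^{\lambda1}(\N[R]^d)\subset V_{p\infty}^{d+1}(\N[R]^d)$ only at the endpoints $p=1$ (via \eqref{5.14}, the $q=\infty$ case of \eqref{5.7}) and $p=\infty$ (where $\lambda=0$), obtains the general $p$ by real interpolation using the sublinear map $\P[E]$ and \cite[4.1.5(c)]{BK}, and finally transfers from $\N[R]^d$ to $Q^d$ via an extension operator for $\dot B_p^{\lambda1}$. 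You instead propose a direct telescoping/Nikolskii computation on $Q^d$. That is a legitimate alternative in spirit, but as sketched your step~(1) contains a genuine error.

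The error is in the line ``raising to the power $p$, summing over the disjoint family $\Delta$ \ldots one gets $\var{}_p^k(f;L_\infty)^p\leqslant c\sum_m\bigl(2^{m\lambda}\omega_k(2^{-m};f;L_p)\bigr)^p$''. For $p>1$ the inequality $\bigl(\sum_m a_{m,Q}\bigr)^p\leqslant\sum_m a_{m,Q}^p$ is false; the $p$th power does not pass through the telescoping sum in the direction you need. What actually works is Minkowski's inequality in $\ell^p(\Delta)$: from $osc_\infty^k(f;Q)\lesssim\sum_{m\geqslant m_0(Q)}a_{m,Q}$ one gets
\[
 \var{}_p^k(f;L_\infty;\Delta)
 =\Bigl(\sum_{Q\in\Delta}osc_\infty^k(f;Q)^p\Bigr)^{1/p}
 \leqslant
 \sum_m\Bigl(\sum_{Q\in\Delta}a_{m,Q}^p\Bigr)^{1/p},
\]
and after using disjointness to control the inner sum at each fixed scale (which is itself the delicate bounded--overlap point you flagged, and is not free), the Nikolskii factor $2^{md/p}=2^{m\lambda}$ turns the outer sum into $\sum_m 2^{m\lambda}\omega_k(2^{-m};f;L_p)\approx|f|_{\dot B_p^{\lambda1}}$ --- the $\ell^1$ Besov seminorm directly. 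Your claimed intermediate bound by $|f|_{\dot B_p^{\lambda p}}$ cannot be correct: it would give $\var{}_p^k(f;L_\infty)\lesssim|f|_{\dot B_p^\lambda}$, hence $\dot B_p^{\lambda p}\subset L_\infty$ at $\lambda=d/p$, which is false for $p>1$ (only $\theta=1$ embeds into $L_\infty$ at the critical smoothness). So the appearance of the $\ell^1$ index is not an afterthought but the essential feature your computation must produce; once the Minkowski step replaces the incorrect interchange, and the per--scale summability over $\Delta$ is proven rigorously, your route gives an alternative to the paper's interpolation argument.
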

\begin{Remark}\label{Remark-2.5}\em
(a) The seminorms $|f|_{V_{pq}^k}$ and $|f|_{B_p^{\lambda\theta}}$ are 
naturally extended by the formulas \em(3.1$^c$)\em{} and \eqref{2.7} to $p<1$. 
Moreover. unlike \eqref{2.6}, the corresponding embedding results are true for 
this case. For instance,
\[
 |f|_{B_p^\lambda}\leqslant c(\lambda,d,p)  |f|_{V_{p\infty}^k}
\]
for $0<p\leqslant 1$ and $\frac\lambda d=\frac1p$.

Therefore Theorems~\ref{Teo-2.3} and \ref{Teo-2.4} are true for $p<1$ as well.

(b) For $B$--splines and, more generally, for refinable functions with the 
Strang--Fix condition the analog of Theorem~\ref{Teo-2.3} was proved in 
\cite{BI} and \cite{DJP}, respectively. The latter result was extended to $p<1$ 
in \cite{DPY}.

Formally, Theorem~\ref{Teo-2.3} does not follow from these results but it can 
be surely proved by the approximation algorithms used in the cited 
papers.
\end{Remark}

\section{Spaces $V_{pq}^k$}

\subSect{} \textit{$(k,p)$-variation}. The named object is a set-function 
defined by \eqref{1.8} with $Q^d$  substituted for a measurable set $S\subset 
\N[R]^d$ of nonempty interior.

We, however, prefer to use an equivalent definition more suitable for our aim. 
There we replace $osc_p^k$ by \textit{local best approximation}, a set-function 
given for $f\in L_q^{loc}(\N[R]^d)$ and $S\subset \N[R]^d$ by
\begin{equation}\label{3.1}
  E_{k}(f;S;L_q):=
 \inf\limits_{m\in\P_{k-1}} \|f-m\|_{L_q(S)}.
\end{equation}

This replacement gives the following:
\begin{Deff}\label{Deff-3.1}
$(k,p)$-variation of a function $f\in L_q^{loc}(\N[R]^d)$ is a set-function on 
subsets $S\subset \N[R]^d$ with nonempty interior given by 
\[
 \var{}_{{p}}^{{k}} (f;S;L_q):=
 \sup\limits_{\Delta} \left\{
 \sum\limits_{Q\in\Delta} E_{{k}} (f;Q;L_q)^p \right\}^{1/p}
\leqno(\ref{3.1}^a)
\]
where $\Delta$ runs over all \textit{disjoint} families of cubes $Q\subset S$.
\end{Deff}

Equivalence of this definition to the previous follows from the main result of 
\cite{B-70} implying, e.g., the next two-sided inequality with constants 
depending only on $k$:
\[
 osc_p^k(f;Q;L_q)\approx E_k(f;Q;L_q).
\leqno(\ref{3.1}^b)
\]

It should be emphasized that in the forthcoming text all definitions and 
results involving the space $V_{pq}^k$ use Definition \ref{Deff-3.1}. In 
particular, the associated seminorm of this space is
\[
 \|f\|_{V_{pq}^k}:=
\sup\limits_{\Delta} \left\{ \sum\limits_{Q\in\Delta} E_k(f;Q;L_q)^p 
 \right\}^{1/p}
\leqno(\ref{3.1}^c)
\]
where $\Delta$ runs over all disjoint families of $Q\subset Q^d$.

Now we present some basic properties of $(k,p)$-variation beginning with the
directly following from Definition~\ref{Deff-3.1}$^a$.

In its formulation, $A(\P[D])$ denoted the $\sigma$-\textit{algebra generated 
by 
dyadic cubes}.
\begin{Pro}[\em Subadditivity]\label{Pro-3.1}
Let $\{S_i\}$ be a disjoint families of sets from $A(\P[D])$. Then 
\begin{equation}\label{3.1.a}
 \left\{\sum\limits_i\var{}_p^k(f;S_i:L_q)^p\right\}^{1/p}
 \leqslant \var{}_p^k (f;\bigcup\limits_iS_i;L_q).
\end{equation}
(Lower semicontinuity) If $\{f_j\}$ converges in $L_q$ to a function $f$, then
\begin{equation}\label{3.2}
  \var{}_p^k(f;S;L_q)\leqslant
  \mathop{\underline{\lim}}\limits_{j\to\infty} \var{}_p^k (f_j;S;L_q).
\end{equation}
\end{Pro}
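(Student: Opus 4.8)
\textbf{Proof proposal for Proposition~\ref{Pro-3.1}.}

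The plan is to treat the two assertions separately, each by unwinding Definition~\ref{Deff-3.1}$^a$ and pushing the supremum through the obvious monotonicity.

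For the subadditivity inequality \eqref{3.1.a}, the idea is to build, from near-optimal disjoint cube families in each $S_i$, a single admissible family in $\bigcup_i S_i$. Fix $\varepsilon>0$ and, for each index $i$ appearing in the (at most countable) collection, choose a disjoint family $\Delta_i$ of cubes contained in $S_i$ with $\sum_{Q\in\Delta_i}E_k(f;Q;L_q)^p\geqslant \var_p^k(f;S_i;L_q)^p-\varepsilon 2^{-i}$; if some $\var_p^k(f;S_i;L_q)$ is infinite the left-hand side of \eqref{3.1.a} is infinite only if the right-hand side is, so one may assume all terms finite, and a truncation to finitely many $i$ reduces to a finite sum. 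Since the $S_i$ are pairwise disjoint and each cube of $\Delta_i$ lies in $S_i$, the union $\Delta:=\bigcup_i\Delta_i$ is again a disjoint family of cubes, now contained in $\bigcup_i S_i$, so it is admissible in Definition~\ref{Deff-3.1}$^a$ for the set $\bigcup_i S_i$. Hence
\[
 \var{}_p^k\Bigl(f;\bigcup_i S_i;L_q\Bigr)^p
 \geqslant \sum_{Q\in\Delta}E_k(f;Q;L_q)^p
 = \sum_i\sum_{Q\in\Delta_i}E_k(f;Q;L_q)^p
 \geqslant \sum_i\var{}_p^k(f;S_i;L_q)^p-\varepsilon,
\]
and letting $\varepsilon\to0$ and taking $p$-th roots gives \eqref{3.1.a}. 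The only point requiring a line of care is the membership $A(\P[D])$: it guarantees that ``cube contained in $S_i$'' is a meaningful, non-degenerate condition and that the $S_i$ being disjoint forces the chosen families to be mutually disjoint — so this hypothesis is used exactly to legitimise the concatenation of the $\Delta_i$'s.

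For lower semicontinuity \eqref{3.2}, I would fix an arbitrary disjoint family $\Delta$ of cubes $Q\subset S$ and note that for each single cube $Q$ the functional $g\mapsto E_k(g;Q;L_q)$ is continuous with respect to $L_q(Q)$-convergence, being $1$-Lipschitz: $|E_k(g;Q;L_q)-E_k(h;Q;L_q)|\leqslant\|g-h\|_{L_q(Q)}$ by the triangle inequality for the infimum in \eqref{3.1}. Thus $\lim_{j}E_k(f_j;Q;L_q)=E_k(f;Q;L_q)$ for every $Q\in\Delta$, and since $\Delta$ may be assumed finite when estimating a partial sum (a countable $\Delta$ is handled by first truncating), Fatou's lemma for sums — or simply termwise passage to the limit over a finite index set — yields
\[
 \sum_{Q\in\Delta}E_k(f;Q;L_q)^p
 = \lim_{j\to\infty}\sum_{Q\in\Delta}E_k(f_j;Q;L_q)^p
 \leqslant \mathop{\underline{\lim}}_{j\to\infty}\var{}_p^k(f_j;S;L_q)^p.
\]
Taking the supremum over all admissible $\Delta$ on the left and the $p$-th root gives \eqref{3.2}.

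I do not expect a serious obstacle here; the statement is an ``abstract nonsense'' consequence of the supremum-of-sums structure of Definition~\ref{Deff-3.1}$^a$ together with the elementary continuity of local best approximation. The only mildly delicate bookkeeping is the reduction from countable to finite families and the handling of the case where some $\var_p^k(f;S_i;L_q)$ is $+\infty$, both of which are routine: an infinite right-hand side makes \eqref{3.1.a} trivial, and an infinite term on the left already forces the corresponding single-set variation on the right to be infinite since $S_i\subset\bigcup_i S_i$ together with monotonicity of $\var_p^k$ in its set argument — itself an immediate consequence of Definition~\ref{Deff-3.1}$^a$.
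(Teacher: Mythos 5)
Your proof is correct and follows essentially the same route as the paper's (which is extremely terse: concatenate near-optimal disjoint cube families $\Delta_i\subset S_i$ into a single admissible family in $\bigcup_i S_i$, then take suprema; for lower semicontinuity the paper merely says ``proved similarly,'' and you supply the natural filling via the $1$-Lipschitz continuity of $E_k(\cdot;Q;L_q)$ together with truncation to finite subfamilies). The only minor quibble is your side remark attributing the legitimacy of the concatenation to $A(\P[D])$-membership — in fact only the pairwise disjointness of the $S_i$ is what is used there — but this does not affect the argument.
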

\begin{proof}
Let $\Delta:=\{Q\}$ be a disjoint family of cubes and 
\begin{equation}\label{3.3}
 \var{}_p^k(f;\Delta;L_q):=\left\{\sum\limits_{Q\in\Delta}
 E_k(f;Q;L_q)^p\right\}^{1/p}.
\end{equation}

If $\{\Delta_i\}$ is disjoint and $\bigcup\limits_{Q\in\Delta_i}Q\subset S_i$ 
then 
\[
 \sum\limits_i\var{}_p^k (f;\Delta_i;L_q)^p=
 \var{}_p^k (f;\bigcup\Delta_i;L_q)
 \leqslant \var{}_p^k (f; \bigcup S_i;L_q) 
\]
and it remains to take supremum over each $\Delta_i$ to prove \eqref{3.1.a}.

The property \eqref{3.2} is proved similarly.
\end{proof}

A more substantive property of $(k,p)$-variation gives the next result.
\begin{Pro}\label{Pro-3.2}
Let a $C^\infty$ function $f$ belong to the space $V_{pq}^k$ of smoothness 
${s}\leqslant {k}$. Then uniformly in $S\in A(\P[D])$
\begin{equation}\label{3.4}
 \lim\limits_{|{S}|\to0}
 \var{}_{p}^{k} (f;S;L_q)=0.
\end{equation}
\end{Pro}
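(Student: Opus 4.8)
The plan is to reduce the uniform smallness of $\var_p^k(f;S;L_q)$ over $S$ of small measure to a single, uniform modulus estimate coming from the smoothness of $f$. First I would record the elementary pointwise estimate for a single cube: for any cube $Q\subset Q^d$,
\[
E_k(f;Q;L_q)\leqslant c(k,d)\,|Q|^{1/q}\,\omega_k(f;\operatorname{diam}Q;L_\infty(Q^d))\leqslant c(k,d)\,|Q|^{1/q}\,\|f\|_{C^k}\,(\operatorname{diam}Q)^k,
\]
using that $f$ is $C^\infty$ on the (compact) closure of $Q^d$, so its $k$-th modulus of continuity in the sup-norm is controlled by $\|f\|_{C^k}\cdot t^k$. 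This already shows $E_k(f;Q;L_q)\leqslant c\,|Q|^{1/q+k/d}$, which is summable-friendly because the exponent $\tfrac1q+\tfrac kd$ strictly exceeds $\tfrac1p$ exactly when $s=d(\tfrac1p-\tfrac1q)<k$; the borderline case $s=k$ needs the genuine modulus $\omega_k$, which tends to $0$, rather than the crude $t^k$ bound.

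Next, given a disjoint family $\Delta=\{Q\}$ with $\bigcup_{Q\in\Delta}Q\subset S$, I would split $\Delta$ according to the size of the cubes: fix a threshold $\varepsilon>0$ and write $\Delta=\Delta_{\text{big}}\cup\Delta_{\text{small}}$ where $\Delta_{\text{big}}$ collects the $Q$ with $\operatorname{diam}Q\geqslant\varepsilon$. The family $\Delta_{\text{big}}$ has at most $c(d)\varepsilon^{-d}$ members (disjointness plus a volume count inside $Q^d$), and since $f\in V_{pq}^k$ its full $(k,p)$-variation over $Q^d$ is finite, so the tail of that fixed convergent series over cubes with small measure — here "the cubes of $\Delta_{\text{big}}$ that also lie in a set $S$ of measure $\to0$" — must be small. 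More carefully: each $Q\in\Delta_{\text{big}}$ has $|Q|\geqslant c\varepsilon^d$, so $\card\Delta_{\text{big}}\leqslant |S|/(c\varepsilon^d)\to0$, forcing $\Delta_{\text{big}}=\varnothing$ once $|S|<c\varepsilon^d$. For $\Delta_{\text{small}}$ I would use the single-cube bound above and the subadditivity Proposition~\ref{Pro-3.1}: in the case $s<k$,
\[
\sum_{Q\in\Delta_{\text{small}}}E_k(f;Q;L_q)^p\leqslant c\,\varepsilon^{(k-s/d\cdot d/... )}\!\!\sum_{Q\in\Delta}|Q|\leqslant c\,\varepsilon^{\,d(k/d+1/q-1/p)\cdot p/... }\,|S|,
\]
and in the case $s=k$ replace the power $\varepsilon^{k}$ by $\omega_k(f;\varepsilon;L_\infty)^{\,}$, which can be made $<\eta$ by choosing $\varepsilon$ small first. (The exact exponent bookkeeping is routine; the point is that the sum over $\Delta_{\text{small}}$ is bounded by a quantity independent of $S$ that tends to $0$ with $\varepsilon$.)

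Combining: given $\eta>0$, first pick $\varepsilon$ so small that the $\Delta_{\text{small}}$-contribution is $<\eta^p$ for every disjoint family (uniformly in $S$), then require $|S|<c(d)\varepsilon^d$ so that $\Delta_{\text{big}}$ is empty; taking the supremum over $\Delta$ gives $\var_p^k(f;S;L_q)<\eta$ uniformly in $S\in A(\P[D])$, which is \eqref{3.4}. The main obstacle is the limiting exponent $s=k$: there the crude Hölder-times-Taylor estimate only gives boundedness, not decay, so one must instead invoke that $\omega_k(f;t;L_q)=o(t^k)$-type behavior — more precisely just $\omega_k(f;t;L_\infty)\to0$ — for the smooth function $f$, and feed that modulus (rather than a fixed power of $\varepsilon$) into the $\Delta_{\text{small}}$ estimate; one also has to be a little careful that the covering lemma / volume counting is applied to genuine cubes inside the bounded domain $Q^d$ so that $\card\Delta_{\text{big}}$ is truly finite.
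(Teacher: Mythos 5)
Your single–cube bound $E_k(f;Q;L_q)\leqslant c(k,d)\,|Q|^{1/q+k/d}\|f\|_{C^k}$ is exactly the right starting point, but you then talk yourself out of it: the claim that this ``only gives boundedness, not decay'' when $s=k$ is incorrect, and it is precisely at this step that you diverge from the paper and introduce an unnecessary (and, as written, not fully closing) detour. The observation you are missing is that the quantity that must go to zero is controlled by $|S|$, not by the cube diameter $\varepsilon$. Raising the Taylor bound to the $p$-th power gives
\[
E_k(f;Q;L_q)^p\leqslant c\,|Q|^{\,p(1/q+k/d)}\,\|f\|_{C^k}^p,
\]
and since $s\leqslant k$ one has $p\bigl(\tfrac1q+\tfrac kd\bigr)\geqslant p\bigl(\tfrac1q+\tfrac sd\bigr)=1$. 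Because every $Q\subset Q^d$ has $|Q|\leqslant 1$, this yields $|Q|^{\,p(1/q+k/d)}\leqslant|Q|$, so for any disjoint family $\Delta$ of cubes inside $S$,
\[
\sum_{Q\in\Delta}E_k(f;Q;L_q)^p\leqslant c\,\|f\|_{C^k}^p\sum_{Q\in\Delta}|Q|\leqslant c\,\|f\|_{C^k}^p\,|S|,
\]
uniformly in $\Delta$. Taking the supremum over $\Delta$ gives $\var_p^k(f;S;L_q)\leqslant c\,\|f\|_{C^k}\,|S|^{1/p}$, which tends to $0$ with $|S|$ in every case $s\leqslant k$, including the borderline $s=k$ (where the exponent is exactly $1$). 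No splitting into large and small cubes, and no modulus of continuity, is needed.

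The modulus argument you sketch for $s=k$ does not actually close as stated. You would bound $E_k(f;Q;L_q)\leqslant c|Q|^{1/q}\omega_k(f;\varepsilon;L_\infty)$ for small cubes and then need $\sum_Q|Q|^{p/q}$ to be controlled; but at $s=k$ one has $0<p/q<1$, so $|Q|^{p/q}\geqslant|Q|$ and that sum is not bounded by $|S|$ (nor a priori by anything uniform). To repair it you would have to reinsert the factor $|Q|^{kp/d}$ coming from the Taylor estimate — at which point you have exactly the paper's bound, and the factor $\omega_k(\varepsilon)$ is superfluous. Likewise the big/small split is harmless but adds nothing: once you see $\sum_{Q\in\Delta}|Q|^{p(1/q+k/d)}\leqslant\sum_{Q\in\Delta}|Q|\leqslant|S|$, the whole estimate is a one-line consequence of disjointness.
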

\begin{proof}
Let $\Delta$ be a disjoint family of cubes $Q\subset S$. By the Taylor formula
\[
 E_k(f;Q;L_q)\leqslant
 |Q|^{1/q} E_k(f;Q;{C})\leqslant
 c(k,d)|Q|^{1/q+k/d} \max\limits_{|\alpha|=k}
 \max\limits_Q |D^\alpha f|;
\]
this implies
\[
 \var{}_p^k(f;\Delta;L_q) \leqslant
 c(k,d) \left\{
 \sum\limits_{Q\in\Delta} |Q|^{p\left(\frac1q+\frac kd\right)}
 \right\}^{1/p}
 |f|_{C^k(Q^d)}.
\]
Since $p\left(\frac1q+\frac kd\right)\geqslant p\left(\frac1q+\frac 
sd\right)=1$, the sum  here is bounded by $\left\{\sum\limits_{Q\in \Delta} 
|Q|\right\}^{1/p}\leqslant |S|^{1/p}$, and therefore
\[
  \var{}_p^k(f;S;L_q):=
 \sup\limits_\Delta \var{}_p^k (f;\Delta;L_q)
\to0
 \text{ as } |S|\to0.\quad
 \qedsymbol
\]\renewcommand{\qed}{}
\end{proof}

\subSect{} \textit{ $C^\infty$ Approximation of \ $V_{pq}^k$ Functions.}
Since the space $V_{pq}^k$ is, in general, nonseparable, $C^\infty$ 
approximated functions form a proper subspace of $V_{pq}^k$. However, a weaker 
form of $C^\infty$ approximation is true.
\begin{Teoo}\label{Teo-3.3}
Let a function $f$ belong to $V_{pq}^k$ if $q<\infty$ and to $V_{p\infty}^k 
\bigcap C(\N[R]^d)$, otherwise. Assume that $Q$ is a subcube of $Q^d$ 
such that
\begin{equation}\label{3.5}
 dist (Q;\N[R]\setminus Q^d)>0.
\end{equation}
Then there exists a sequence $\{f_j\}\subset C^\infty(\N[R]^d)$ such that 
\begin{equation}\label{3.6}
 \lim\limits_{j\to\infty} f_j =f
 \text{ (convergence in $L_q(Q)$)}
\end{equation}
and, moreover,
\begin{equation}\label{3.7}
 \sup\limits_{j} \var{}_p^k (f_j;Q;L_q)
 \leqslant \var{}_p^k (f;L_q).
\end{equation}
\end{Teoo}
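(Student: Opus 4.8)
The natural approach is mollification: set $f_j := f * \varphi_j$ where $\{\varphi_j\}$ is a standard approximate identity, with $\varphi_j$ supported in a ball of radius $\varepsilon_j \to 0$. Condition \eqref{3.5} guarantees that for $j$ large enough the convolution is well-defined on a neighborhood of $Q$ (we only need values of $f$ inside $Q^d$), so $f_j \in C^\infty$ near $Q$ and can be extended smoothly to all of $\N[R]^d$; the convergence \eqref{3.6} in $L_q(Q)$ is classical when $q < \infty$, and in $C(Q)$ (hence in $L_\infty(Q)$) when $f$ is continuous, which handles the $q=\infty$ case. So the entire content of the theorem is the seminorm bound \eqref{3.7}: one must show $\var_p^k(f_j; Q; L_q) \leqslant \var_p^k(f; L_q)$ uniformly in $j$.

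First I would reduce \eqref{3.7} to a statement about a single finite disjoint family of cubes: it suffices to show that for every disjoint family $\Delta = \{Q_i\}$ of cubes contained in $Q$,
\[
 \Bigl\{ \sum_i E_k(f_j; Q_i; L_q)^p \Bigr\}^{1/p} \leqslant \var_p^k(f; L_q).
\]
The key mechanism is that the $k$-th order local best approximation $E_k(\cdot; Q_i; L_q)$ is translation-covariant in the right way: $E_k(\delta_h f; Q_i; L_q) = E_k(f; Q_i + h; L_q)$ since translating a polynomial of degree $k-1$ gives a polynomial of degree $k-1$. Therefore, writing $f_j(x) = \int \varphi_j(h) (\delta_{-h} f)(x)\, dh = \int \varphi_j(h) f(x-h)\,dh$, Minkowski's integral inequality in $L_q(Q_i)$ applied inside the infimum gives
\[
 E_k(f_j; Q_i; L_q) \leqslant \int \varphi_j(h)\, E_k(f; Q_i - h; L_q)\, dh.
\]
Then the $\ell^p$-norm over $i$ together with Minkowski's integral inequality in $\ell^p$ yields
\[
 \Bigl\{ \sum_i E_k(f_j; Q_i; L_q)^p \Bigr\}^{1/p}
 \leqslant \int \varphi_j(h) \Bigl\{ \sum_i E_k(f; Q_i - h; L_q)^p \Bigr\}^{1/p} dh.
\]
Now for each fixed $h$ in the (small) support of $\varphi_j$, the translated family $\{Q_i - h\}$ is again a disjoint family of cubes contained in $Q^d$ — this is exactly where \eqref{3.5} is used, ensuring $Q_i - h \subset Q^d$ once $|h| < \mathrm{dist}(Q; \N[R]\setminus Q^d)$ — so the bracketed quantity is $\leqslant \var_p^k(f; L_q)$ by \eqref{3.1}$^c$. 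Since $\int \varphi_j = 1$, the right side is $\leqslant \var_p^k(f; L_q)$, and taking the supremum over $\Delta$ finishes \eqref{3.7}.

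\textbf{Main obstacle.} The two genuine technical points are: (i) making precise that $f_j$ is $C^\infty$ in a full neighborhood of $Q$ and choosing a smooth global extension — routine, handled by \eqref{3.5} and a cutoff; and (ii) the double application of Minkowski's integral inequality, once to pull the infimum-over-polynomials past the convolution integral and once to pull the $\ell^p(\Delta)$-sum past it. The first Minkowski step needs a small argument: one picks, for the translated function, the averaged polynomial $m_j := \int \varphi_j(h) \tau_{-h} m_h\, dh$ where $m_h$ is near-optimal for $f(\cdot - h)$ on $Q_i - h$... but cleaner is to note $E_k$ is a seminorm on $L_q(Q_i)$ (it is the quotient norm modulo $\P_{k-1}$), so $E_k(\int \varphi_j(h) f(\cdot-h)\,dh; Q_i) \leqslant \int \varphi_j(h) E_k(f(\cdot-h); Q_i)\,dh$ is immediate from the integral triangle inequality for seminorms, and then $E_k(f(\cdot-h); Q_i; L_q) = E_k(f; Q_i - h; L_q)$. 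No step is deep; the only thing requiring care is bookkeeping of which translates of cubes stay inside $Q^d$, which \eqref{3.5} is tailored to provide. (The $q = \infty$ case is identical with $\ell^\infty$/sup replacing the relevant norms and continuity giving uniform convergence.)
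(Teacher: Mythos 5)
Your proof is correct and follows the same overall strategy as the paper: mollify $f$ to get $f_j\in C^\infty$, obtain \eqref{3.6} from Minkowski plus translation continuity of $L_q$ (or of $C$ when $q=\infty$), reduce \eqref{3.7} to the single-scale convolution inequality
\[
E_k(f_j;Q_i;L_q)\leqslant\int \varphi_j(h)\,E_k(f;Q_i-h;L_q)\,dh,
\]
apply Minkowski in $\ell_p$ over the disjoint family, and invoke \eqref{3.5} to keep the translated cubes inside $Q^d$. The one point where you genuinely diverge from the paper is in how this convolution inequality (the paper's Lemma~3.4) is justified. The paper argues by duality: it first treats $q<\infty$, writes $E_k(f_\varepsilon;Q;L_q)$ as a supremum over test functions $g\in\P_{k-1}^\perp(Q)$, uses that $g(\cdot+\varepsilon y)\in\P_{k-1}^\perp(Q-\varepsilon y)$ to subtract a polynomial under the integral, applies H\"older, and finally lets $q\to\infty$. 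You instead observe that $E_k(\cdot;Q_i;L_q)$ is precisely the quotient seminorm of $L_q(Q_i)$ modulo $\P_{k-1}$, so the Bochner-integral triangle inequality gives the estimate directly, together with the exact translation identity $E_k(f(\cdot-h);Q_i;L_q)=E_k(f;Q_i-h;L_q)$. Your route is shorter and works uniformly in $1\leqslant q\leqslant\infty$ (no separate passage to the limit), at the modest cost of having to justify the vector-valued triangle inequality for the quotient seminorm --- which you do correctly via the linearity of the quotient map and the Bochner inequality in $L_q(Q_i)/\P_{k-1}$, or equivalently via the averaged near-optimal polynomial $\int\varphi_j(h)\,m_h(\cdot+h)\,dh$ that you sketch. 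The paper's duality argument avoids this small measurability/Bochner point but is otherwise heavier. Both are sound; yours is the more elementary of the two.
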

\begin{proof}
Let $f_\varepsilon$ be a \textit{regularizer} of $f$ given by
\begin{equation}\label{3.8}
f_\varepsilon(x):= \int\limits_{Q_\varepsilon}
 f(x-\varepsilon y)\varphi(y)\,dy,\quad
 x\in Q,
\end{equation}
where $\varphi\in C^\infty(\N[R]^d)$ is a test function, i.e., 
\begin{equation}\label{3.9}
 \varphi\geqslant0,
 \int\varphi\,dx=1
 \text{ and } supp\varphi\subseteq[-1,1]^d,
\end{equation}
where $\varepsilon>0$ is such that
\begin{equation}\label{3.10}
 Q_\varepsilon:=Q+[-\varepsilon,\varepsilon]^d\subseteq Q^d,
\end{equation}
see \eqref{3.5}.

Now \eqref{3.9} and the Minkowski inequality yield
\[
 \|f-f_\varepsilon\|_{L_q(Q)} 
 \leqslant \sup\limits_{|y|\leqslant1} \|f(\cdot-\varepsilon y)-f\|_{L_q( Q)}.
\]
Since the right-hand side tends to $0$ as $\varepsilon\to0$ for $q<\infty$ and 
for $q=\infty$ if $f\in C(\N[R]^d)$, \eqref{3.6} follows.
\end{proof}

To proceed we need the following:
\begin{lemma}\label{lem-3.4}
It is true that
\begin{equation}\label{3.11}
 E_k(f_\varepsilon;Q;L_q)
 \leqslant \int\limits_{|y|\leqslant1}
 E_k(f;Q-\varepsilon y;L_q)\varphi(y)\,dy.
\end{equation}
\end{lemma}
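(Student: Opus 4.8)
The plan is to use that $f_\varepsilon$ is, by \eqref{3.8}, an average of the translates $x\mapsto f(x-\varepsilon y)$ against the probability density $\varphi$, and to combine this with two elementary facts about $E_k(\,\cdot\,;Q;L_q)$: that it is compatible with translation, and that it passes under integral averages via Minkowski's integral inequality.

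First I would record the translation identity: for fixed $y$ the map $m\mapsto m(\cdot+\varepsilon y)$ is a bijection of $\P_{k-1}$, so the change of variables $u=x-\varepsilon y$ gives
\[
 E_k\bigl(f(\cdot-\varepsilon y);Q;L_q\bigr)=\inf_{m\in\P_{k-1}}\|f-m(\cdot+\varepsilon y)\|_{L_q(Q-\varepsilon y)}=E_k(f;Q-\varepsilon y;L_q).
\]
Next, for a fixed $\eta>0$ I would, for each $y\in\operatorname{supp}\varphi$, pick $m_y\in\P_{k-1}$ with $\|f(\cdot-\varepsilon y)-m_y\|_{L_q(Q)}\leqslant E_k(f;Q-\varepsilon y;L_q)+\eta$, arranging $y\mapsto m_y$ to be measurable. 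Setting $P:=\int_{|y|\leqslant1}m_y\,\varphi(y)\,dy\in\P_{k-1}$, from \eqref{3.8} one has $f_\varepsilon-P=\int_{|y|\leqslant1}\bigl(f(\cdot-\varepsilon y)-m_y\bigr)\varphi(y)\,dy$, and Minkowski's integral inequality together with the previous display and $\int\varphi=1$ yields
\[
 E_k(f_\varepsilon;Q;L_q)\leqslant\|f_\varepsilon-P\|_{L_q(Q)}\leqslant\int_{|y|\leqslant1}E_k(f;Q-\varepsilon y;L_q)\,\varphi(y)\,dy+\eta ,
\]
and letting $\eta\to0$ proves \eqref{3.11}.

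The only point requiring care --- and what I expect to be the main obstacle --- is making the choice $y\mapsto m_y$ measurable, so that the polynomial $P$ is well defined (its $L_q(Q)$-coefficients are then automatically bounded, since the near-optimality bound together with boundedness of $y\mapsto f(\cdot-\varepsilon y)$ into $L_q(Q)$ on the compact $\operatorname{supp}\varphi$ controls $\|m_y\|_{L_q(Q)}$ uniformly). Since $\P_{k-1}$ restricted to $Q$ is finite-dimensional and $(y,m)\mapsto\|f(\cdot-\varepsilon y)-m\|_{L_q(Q)}$ is jointly continuous --- using continuity of translation in $L_q$ when $q<\infty$, respectively uniform continuity of $f$ when $q=\infty$ --- this can be settled either by a measurable-selection theorem or, more concretely, by subdividing $\operatorname{supp}\varphi$ into finitely many small pieces, using on each a single polynomial nearly optimal for one representative translate, and letting the mesh tend to zero; both routes produce the stated inequality with a vanishing error term. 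No other step is delicate: the analytic core of the argument is the single application of Minkowski's integral inequality.
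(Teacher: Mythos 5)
Your argument is correct, but it runs along a genuinely different line from the paper's. You work on the \emph{primal} side: for each $y$ you select a near-optimal $m_y\in\P_{k-1}$, average these to build a single competitor polynomial $P=\int m_y\varphi(y)\,dy$, and then apply Minkowski's integral inequality to $f_\varepsilon-P=\int(f(\cdot-\varepsilon y)-m_y)\varphi(y)\,dy$. The paper instead works on the \emph{dual} side: it represents $E_k(f_\varepsilon;Q;L_q)$ via Hahn--Banach as a supremum of pairings $\int_Q f_\varepsilon g$ over unit-norm $g\in L_{q'}$ annihilating $\P_{k-1}$, uses Fubini and a change of variables to write this as an average over $y$ of pairings of $f$ against translates $g(\cdot+\varepsilon y)$ (which still annihilate $\P_{k-1}$ and so may be tested against $f-m$), and then applies H\"older. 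The dual route avoids entirely the measurable-selection/averaged-polynomial step that you correctly flag as the delicate point of the primal route; the price it pays is that the duality representation is formulated for $q<\infty$, and the case $q=\infty$ is then recovered by a limiting argument in $q$. Your route, conversely, handles $q<\infty$ and $q=\infty$ uniformly (using continuity of translation in $L_q$, resp.\ uniform continuity of $f$), at the cost of the selection argument. Both are clean once the respective technical point is addressed; the translation identity $E_k(f(\cdot-\varepsilon y);Q;L_q)=E_k(f;Q-\varepsilon y;L_q)$ that you record explicitly is implicit in the paper's change-of-variables step.
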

\begin{proof}
It suffices to prove \eqref{3.11} for $q<\infty$ and then pass $q$ to $+\infty$.

Let $q<\infty$ and $q'$ denote the conjugate exponents. By $\P_{k-1}^\perp(Q)$ 
we denote a set of functions $g\in L_{q'}(\N[R]^d)$ such that 
\begin{equation}\label{3.12}
 \|g\|_{L_{q'}}=1,
 \,\,
 supp\, g\subset Q,
 \,\,
 \int x^\alpha g(x)\,dx=0,
 \,\,
 |\alpha|\leqslant k-1.
\end{equation}

By the duality of $L_q$ and $L_{q'}$ 
\begin{equation}\label{3.13}
  E_k(f_\varepsilon;Q;L_q)=
 \sup\left\{ \int\limits_Q f_\varepsilon g\,dx;
 g\in\P_{k-1}^\perp(Q)\right\}.
\end{equation}

On the other hand,
\begin{equation}\label{3.14}
  \left|\int\limits_Q f_\varepsilon g\,dx\right|\leqslant
 \int\limits_{\N[R]^d} \left|\,\,
 \int\limits_{Q-\varepsilon y} f(x) g(x+\varepsilon y)\,dx\right|
 \varphi(y)\,dy
\end{equation}
and the function $x\mapsto g(x+\varepsilon y)$, $x\in Q$, clearly, belongs to 
the set $\P_{k-1}^\perp (Q-\varepsilon y)$. Therefore for every polynomial $m$ 
of degree $k-1$
\[
 \int\limits_{Q-\varepsilon y} f_\varepsilon (x) g(x+\varepsilon y)\, dx=
 \int\limits _{Q-\varepsilon y} f(x) (g-m) (x+\varepsilon y)\,dx.
\]
Combining this with \eqref{3.14} and using the H\"{o}lder inequality to obtain 
\[
 \left|\int\limits_Q f_\varepsilon g\,dx\right|\leqslant
 \int\limits_{\N[R]^d} \varphi (y) \|f-m\|_{L_q(Q-\varepsilon y)}
 \|g\|_{L_{q'}(\N[R]^d)}\,dy.
\]

Taking here infimum over all polynomials $m$ and then supremum over all 
$g\in\P_{k-1}^\perp(Q)$ we get by \eqref{3.13}
\[
 E_k(f_\varepsilon;Q;L_q)\leqslant
 \int\limits_{\N[R]^d} E_k(f;Q-\varepsilon y;L_q)\varphi (y)\, dy.
\]

The result is done.
\end{proof}

Now we prove \eqref{3.7}. To this aim we first estimate 
$\var{}_p^k(f;\Delta;L_q)$, see \eqref{3.4}, for a disjoint family $\Delta$ of 
cubes $ \widehat Q\subset  Q$.
Due to \eqref{3.11} the Minkowski inequality gives for such $\Delta$
\[
 \var{}_p^k(f_\varepsilon;\Delta;L_q)\leqslant
 \int\limits_{\N[R]^d} \var{}_p^k (f;\Delta-\varepsilon y;L_q)\varphi(y)\,dy.
\]

Since $\Delta-\varepsilon y:=\{\widehat Q-\varepsilon y;\widehat Q\in\Delta\}$ 
is a disjoint 
family of cubes containing for small $\varepsilon$ in $Q^d$, the 
right-hand side is bounded by $\var{}_p^k(f;L_q)$. Taking then supremum over 
all such $\Delta$, we obtain the inequality
\[
 \var{}_p^k(f_\varepsilon; Q;L_q)\leqslant \var{}_p^k(f;L_q)
\]
that clearly implies \eqref{3.7}. $\qedsymbol$
\begin{Remark}\label{Rema-3.5}\em
Let $Q^d$ be an extension domain of $V_{pq}^k$, i.e., there exists a 
linear continuous extension operator from $V_{pq}^k$ int $V_{pq}^k(\widetilde 
Q)$ where $Q^d\subset \widetilde Q$ and $dist (Q^d;\N[R]\setminus 
\widetilde Q)>0$. In this case, $f_\varepsilon$ can be defined on the unit cube 
$Q^d$ and therefore Theorem~\ref{Teo-3.3} holds for $Q^d$ substituted for 
$Q$. 
\end{Remark}

Unfortunately, the corresponding extension theorem is unknown though it is true 
for some spaces $V_{pq}^k$, e.g., for $s(V_{pq}^k)=k$. The special case of the 
last assertion for the space $BV(Q^d)$ and even for more general class of 
domains is presented, e.g., in \cite[Th,~5.6]{Z}.

This remark leads to the following:

\subSect{} \textit{Conjecture}. For every $f\in V_{pq}^k$ there is a sequence 
$\{f_j\}\subset C^\infty(\N[R]^d)$ such that
\[
  \|f-f_j\|_{L_q}\to 0
  \text{ as } j\to\infty
\]
and, moreover,
\begin{equation}\label{3.15}
 \lim\limits_{i\to\infty} \var{}_p^k (f_j;L_q)\leqslant
 \var{}_p^k(f;L_q)
\end{equation}

\subSect{} \textit{Polynomial Approximation on $d$-Rings}. 
\begin{Teoo}\label{Teo-3.7}
Let $Q\subset Q^*$ be dyadic subcubes of $Q^d$. Then it is true that
\begin{equation}\label{3.16}
  E_k(f;Q^*\setminus Q;L_q)\leqslant
 c(k,d) \var{}_p^k (f;Q^*\setminus Q;L_q).
\end{equation}
\end{Teoo}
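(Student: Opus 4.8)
The plan is to bound the best polynomial approximation $E_k(f;Q^*\setminus Q;L_q)$ by a sum of local best approximations over a disjoint family of cubes contained in $Q^*\setminus Q$, since such a sum is controlled by $\var{}_p^k(f;Q^*\setminus Q;L_q)$ via Definition~\ref{Deff-3.1}$^a$. Concretely, I would first fix a polynomial $m\in\P_{k-1}$ that is near-optimal for $f$ on some fixed reference cube inside the ring (for instance, on a specific dyadic sub-cube $K_0$ of $Q^*\setminus Q$ of side comparable to that of $Q^*$), so that $\|f-m\|_{L_q(K_0)}\leqslant E_k(f;Q^*\setminus Q;L_q)$ trivially but, more usefully, $\|f-m\|_{L_q(K_0)}\leqslant\var{}_p^k(f;Q^*\setminus Q;L_q)$. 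The whole difficulty is then to propagate this estimate from $K_0$ to the entire ring $Q^*\setminus Q$, i.e. to show $\|f-m\|_{L_q(Q^*\setminus Q)}\leqslant c(k,d)\var{}_p^k(f;Q^*\setminus Q;L_q)$.

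To carry out the propagation I would build a chain of cubes. The ring $Q^*\setminus Q$ decomposes naturally into dyadic ``shells'': the cubes of $\P[D](Q^*)$ of a given generation that meet $Q^*\setminus Q$ but lie outside $Q$. For each dyadic cube $K$ appearing in this decomposition one has the one-cube comparison of two polynomials: if $m_1,m_2\in\P_{k-1}$ then on any cube $K$, by equivalence of norms on the finite-dimensional space $\P_{k-1}$ and a rescaling argument,
\[
 \|m_1-m_2\|_{L_q(K)}\leqslant c(k,d)\bigl(\|f-m_1\|_{L_q(K)}+\|f-m_2\|_{L_q(K)}\bigr),
\]
and moreover $\|m_1-m_2\|_{L_q(K')}\leqslant c(k,d)(|K'|/|K|)^{1/q}\|m_1-m_2\|_{L_q(K)}$ whenever $K'\supseteq K$ are comparable dyadic cubes (polynomial ``reproduction'' / Remez-type inequality). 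Summing the $p$-th powers of local errors $\|f-m_K\|_{L_q(K)}$, where $m_K$ is a near-best polynomial for $f$ on $K$, over a disjoint family of such $K$ filling the ring gives exactly $\var{}_p^k(f;Q^*\setminus Q;L_q)^p$ up to the constant in \eqref{3.1}$^b$; the key point is that the geometric decay of the side-lengths of the shells makes the corresponding geometric series in $2^{-js}$ (coming from the smoothness $s>0$ built into the hypothesis, or more directly from the volume factors $|K|^{1/q}$ against the fixed $p$-sum) converge, so that the telescoping of the $m_K$'s along a chain from $K_0$ out to any point of the ring costs only a bounded factor.

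This is where the covering lemma of the author and V.~Dolnikov (announced in the introduction as used here, proof in Appendix~I) enters: one needs that $Q^*\setminus Q$ can be covered by a controlled-overlap family of dyadic cubes, each of which is ``thick'' relative to its distance to the inner boundary $\partial Q$, so that consecutive cubes in the chain are comparable and the number of generations of shells is harmless. I would invoke that lemma to produce the chain structure, then assemble: (i) pick $m=m_{K_0}$; (ii) for each cube $K$ in the covering, telescope $\|f-m\|_{L_q(K)}\leqslant\|f-m_K\|_{L_q(K)}+\|m_K-m\|_{L_q(K)}$ and bound the second term by a chain sum of $\|m_{K_i}-m_{K_{i+1}}\|$ each controlled by neighboring local errors; (iii) take $L_q$ (or $\ell_p\hookrightarrow\ell_q$, using $p<q$) aggregation over $K$, absorbing the bounded-overlap and geometric-decay constants into $c(k,d)$, to arrive at \eqref{3.16}. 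The main obstacle is step (iii): making the bookkeeping of the chains uniform — ensuring the total number of times any given local error $E_k(f;K;L_q)$ is reused across all chains is bounded by $c(k,d)$ — and this is precisely what the covering lemma is designed to guarantee.
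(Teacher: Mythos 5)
You have the right global strategy (bound $E_k$ on the ring by a sum of local $E_k$'s over cubes inside it, each of which is trivially $\leqslant\var_p^k(f;Q^*\setminus Q;L_q)$ since a single cube is a one-element disjoint family, and you correctly sense that the covering lemma is the crux), but you misidentify what the covering lemma actually says, and this leads to a gap that your repair does not close.

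You imagine a Whitney-type decomposition of $Q^*\setminus Q$ into geometrically many dyadic ``shells'', so that the number of cubes (and hence the length of the chain from a reference cube $K_0$ out to the boundary of $Q^*$) grows like the logarithm of $|Q^*|/|Q|$. You are then forced to hope that a geometric series in $2^{-js}$ saves the day, ``coming from the smoothness $s>0$ built into the hypothesis.'' This is the gap: Theorem~\ref{Teo-3.7} has \emph{no} smoothness hypothesis at all — it is a purely local statement about any $f$ with finite $(k,p)$-variation on the ring, and there is no parameter $s$ available. Moreover, the constant in the chaining lemma (Lemma~\ref{Lem-3.9}) is of the form $(c(k,d)\varepsilon^{-k+1})^{N-1}$, i.e.\ \emph{exponential} in the chain length $N$; with an unboundedly long chain this cannot produce a constant $c(k,d)$ depending only on $k$ and $d$, and neither the $p<q$ aggregation nor the volume factors $|K|^{1/q}$ tame it. The bookkeeping of ``how often each local error is reused'' that you flag at the end is also a real problem in your scheme, but it is secondary to the chain-length issue.

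What Lemma~\ref{lem-3.10} (Appendix~I) actually provides is much stronger and quite different in geometry: a covering $\K$ of $Q^*\setminus Q$ by (non-dyadic, general half-open) cubes, arranged along a Hamiltonian cycle of a hypercube graph, with pairwise overlaps $|K_1\cap K_2|\geqslant\tfrac12\min\{|K_1|,|K_2|\}$ and — crucially — of cardinality at most $4(2^d-1)$, \emph{independent of the ratio of sizes of $Q^*$ and $Q$}. The cubes far from $Q$ are large, the ones near $Q$ are small, but there are only $O(2^d)$ of them in total. With this, Lemma~\ref{Lem-3.9} is applied only $O(2^d)$ times, yielding $E_k(f;Q^*\setminus Q;L_q)\leqslant (c(k,d)2^{k-1})^{4(2^d-1)}\sum_{K\in\K}E_k(f;K;L_q)$, and each summand is $\leqslant\var_p^k(f;Q^*\setminus Q;L_q)$ by (\ref{3.1}$^a$). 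No shells, no geometric series, no reuse bookkeeping. Your proposal would need to replace its Whitney-shell picture with this bounded-cardinality cycle covering to close the argument.
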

\begin{proof}
We need several auxiliary results.
\begin{lemma}\label{Lem-3.8}
Let $S_1,S_2\subset \N[R]^d$ be subsets of finite measure such that for 
$\varepsilon>0$
\begin{equation}\label{3.17}
 |S_1\bigcap S_2|\geqslant \varepsilon\cdot 
 \min\limits_{i=0,1} \{|S_i|\}.
\end{equation}
Then the following is true:
\begin{equation}\label{3.18}
 E_k(f;S_1\bigcup S_2;L_q)\leqslant
 c\varepsilon^{-k+1}
 \sum\limits_{i=1}^2 E_k(f;S_i;L_q).
\end{equation}
\end{lemma}

For proof see, e.g., \cite[p.~123]{BB}.
\end{proof}
\begin{lemma}\label{Lem-3.9}
Let $\{S_j\}_{1\leqslant j\leqslant N} $ be a family of subsets in $\N[R]^d$ of 
finite measure such that for some $\varepsilon>0$
\begin{equation}\label{3.19}
 |S_j\bigcap S_{j+1}|\geqslant
 \varepsilon\min\{|S_j|,|S_{j+1}|\}, \,\,
 1\leqslant j<N.
\end{equation}

Then it is true that
\begin{equation}\label{3.20}
  E_k(f;\bigcup\limits_{j=1}^NS_j;L_q) \leqslant
 c\sum\limits_{j=1}^N E_k(f,S_j;L_q)
\end{equation}
where $c=(c(k,d)\varepsilon^{-k+1})^{N-1}$.
\end{lemma}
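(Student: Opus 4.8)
\textbf{Proof proposal for Lemma~\ref{Lem-3.9}.}

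The plan is to prove the inequality by induction on $N$, using Lemma~\ref{Lem-3.8} as the base/step mechanism. For $N=1$ the claim is trivial, and for $N=2$ it is precisely Lemma~\ref{Lem-3.8} (with the exponent $c(k,d)\varepsilon^{-k+1}=(c(k,d)\varepsilon^{-k+1})^{2-1}$). So the content is the inductive step: assuming the estimate for chains of length $N-1$, I would like to glue on the last set $S_N$. The naive attempt is to set $T:=\bigcup_{j=1}^{N-1}S_j$ and apply Lemma~\ref{Lem-3.8} to the pair $(T,S_N)$; but the overlap hypothesis \eqref{3.17} for that pair is not immediately available, since we only control $|S_{N-1}\cap S_N|$ against $\min\{|S_{N-1}|,|S_N|\}$, not against $\min\{|T|,|S_N|\}$. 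The fix is to run the induction from the other end, or, more robustly, to split the chain at a well-chosen index.

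Concretely, I would proceed as follows. Pick the index $m$ with $1\le m\le N$ at which $|S_m|$ is smallest, set $U:=\bigcup_{j=1}^{m}S_j$ and $V:=\bigcup_{j=m}^{N}S_j$, so that $U\cup V=\bigcup_{j=1}^N S_j$ and $S_m\subset U\cap V$. Then
\[
 |U\cap V|\ \geqslant\ |S_m|\ =\ \min\{|S_m|,\,\ldots\}\ \geqslant\ \min\{|U|,|V|\}\cdot\frac{|S_m|}{\max\{|U|,|V|\}}.
\]
This last bound is useless without control of $|U|$ and $|V|$, so instead I would avoid measuring $U\cap V$ through $S_m$ and instead simply observe $|S_m|\le|U|$ and $|S_m|\le|V|$, hence $|U\cap V|\ge|S_m|\ge\min\{|U|,|V|\}$ is false in general — rather $|U\cap V|\ge|S_m|$, and since $|S_m|$ need not be comparable to $\min\{|U|,|V|\}$, the cleaner route is the one-ended induction together with the observation that at each gluing step the set being absorbed can be taken to be the \emph{smaller} one. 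I would therefore reorganize: by Lemma~\ref{Lem-3.8} applied to $S_j$ and $S_{j+1}$ we always get a polynomial good on $S_j\cup S_{j+1}$, and iterating, at stage $j$ we glue $S_{j+1}$ onto the running union $T_j:=\bigcup_{i\le j}S_i$ using the hypothesis $|S_j\cap S_{j+1}|\ge\varepsilon\min\{|S_j|,|S_{j+1}|\}$ together with $S_j\subset T_j$, which gives $|T_j\cap S_{j+1}|\ge|S_j\cap S_{j+1}|\ge\varepsilon\min\{|S_j|,|S_{j+1}|\}$; the only missing comparison is $\min\{|S_j|,|S_{j+1}|\}$ versus $\min\{|T_j|,|S_{j+1}|\}$, and since $|S_j|\le|T_j|$ this is exactly $\min\{|S_j|,|S_{j+1}|\}\ge$ well, it is $\ge$ only if $|S_j|\le|S_{j+1}|$. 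Hence I would choose, at each step, to extend by the set of smaller measure: reindex so that we always absorb the smaller of the two endpoints, which for a \emph{chain} (linear order of overlaps) can be arranged by peeling from whichever end currently has the smaller extreme set. Each peel invokes Lemma~\ref{Lem-3.8} with constant $c(k,d)\varepsilon^{-k+1}$, and after $N-1$ peels the accumulated constant is $(c(k,d)\varepsilon^{-k+1})^{N-1}$, with the sum $\sum_j E_k(f;S_j;L_q)$ on the right by the triangle-inequality bookkeeping inside \eqref{3.18}.

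The main obstacle is exactly this bookkeeping about \emph{which} set is the smaller one when gluing, i.e.\ making sure the overlap hypothesis \eqref{3.17} of Lemma~\ref{Lem-3.8} is genuinely met at every step rather than only for the original consecutive pairs; the chain structure (each $S_j$ overlaps only its neighbors, with the $\min$ taken over the two neighbors) is what makes the peeling-from-an-endpoint strategy work and keeps the constant at $(c(k,d)\varepsilon^{-k+1})^{N-1}$ rather than something worse. Once that is set up, the rest — tracking the best-approximating polynomials and summing the errors via \eqref{3.18} — is routine. I would write the induction cleanly as: if $|S_1|\le|S_N|$ absorb $S_1$ into the chain $S_2,\dots,S_N$ (shorter chain, same hypotheses), otherwise absorb $S_N$ into $S_1,\dots,S_{N-1}$; in either case one application of Lemma~\ref{Lem-3.8} plus the induction hypothesis yields \eqref{3.20}.
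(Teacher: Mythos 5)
Your strategy is the same one the paper uses: induction on $N$, with Lemma~\ref{Lem-3.8} supplying the gluing at each step. You also correctly put your finger on the one nontrivial point of the induction: the overlap hypothesis \eqref{3.17} for the pair $\bigl(\bigcup_{j\leqslant N}S_j,\,S_{N+1}\bigr)$ does \emph{not} follow automatically from the pairwise hypothesis \eqref{3.19}. The paper's written argument asserts $\min\{|S_N|,|S_{N+1}|\}=\min\{|S^N|,|S_{N+1}|\}$ with $S^N:=\bigcup_{j\leqslant N}S_j$; but that equality fails whenever $|S_N|<|S_{N+1}|$, since then the left side equals $|S_N|$ while the right side is $\geqslant|S_N|$ and may be strictly larger. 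So you are right to be suspicious at exactly this spot.

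However, your proposed fix --- always peel off the endpoint of smaller measure --- does not close the gap. When you peel $S_1$ into $T:=\bigcup_{j\geqslant2}S_j$, what you control is $|S_1\cap T|\geqslant|S_1\cap S_2|\geqslant\varepsilon\min\{|S_1|,|S_2|\}$, and this dominates $\varepsilon\min\{|S_1|,|T|\}$ precisely when $|S_1|\leqslant|S_2|$ --- not merely when $|S_1|\leqslant|S_N|$; symmetrically, peeling $S_N$ is safe only when $|S_N|\leqslant|S_{N-1}|$. If both endpoints exceed their immediate neighbours (a chain of shape large--small--$\cdots$--small--large) neither end can be removed while keeping the overlap ratio at $\varepsilon$. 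The obstruction is genuine: take $d=1$, $k=2$, $q=\infty$, $\varepsilon=\tfrac12$, $S_1=[0,1]$, $S_2=[1-\delta,1+\delta]$, $S_3=[1,2]$, and $f(x)=\min(x,2-x)$. Hypothesis \eqref{3.19} holds with $\varepsilon=\tfrac12$; yet $E_2(f;S_1;L_\infty)=E_2(f;S_3;L_\infty)=0$, $E_2(f;S_2;L_\infty)=\delta/2$, and $E_2(f;S_1\cup S_2\cup S_3;L_\infty)=\tfrac12$, so no constant of the announced form $(c(k,d)\varepsilon^{-k+1})^{N-1}$ can make \eqref{3.20} hold as $\delta\to0$. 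Any correct version of the argument, along either your lines or the paper's, must either assume some comparability of the measures $|S_j|$ along the chain, or allow the constant to record the actual overlap ratios produced at each gluing step rather than the single $\varepsilon$ from \eqref{3.19}.
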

\begin{proof}[Proof \em (induction on $N$)] For $N=2$ the result is done by 
\eqref{3.18}. Now assume that \eqref{3.20} holds for $N\geqslant2$ and prove 
it 
for $N+1$.

Setting $S^M:=\bigcup\limits_{j=1}^MS_j$ we get from \eqref{3.18}
\begin{multline*}
 |S^N\bigcap S_{N+1}|\geqslant \varepsilon |S_N\bigcap S_{N+1}|
 \geqslant \displaybreak[2]\\
 \geqslant\varepsilon
 \min\{|S_N|,|S_{N+1}|\}=
 \varepsilon \min\{|S^N|,|S_{N+1}|\}.
\end{multline*}

Further, Lemma~\ref{Lem-3.8} implies
\[
 E_k(f;\bigcup\limits_{j=1}^{N+1}S_j;L_q)\leqslant
 c(k,d) \varepsilon^{-k+1}(E_k(f;S^N;L_q)+
 E_k(f;S_{N+1};L_q))
\]
while the induction hypothesis gives 
\[
 E_k(f;S^N;L_q)\leqslant
 (c(k,d)\varepsilon ^{-k+1})^{N-1}
 \sum\limits_{j=1}^N E_k(f;S_j;L_q).
\]
Combining we get the result for $N+1$.
\end{proof}

Now we formulate the mentioned covering result proved in Appendix~I.
\begin{lemma}\label{lem-3.10}
There exists a covering $\K$ of $Q^*\setminus Q$ by cubes such that the 
following holds:

For every overlapping pair\footnote{i.e., intersecting by a set of positive 
measure}
$K_1,K_2\in\K$ 
\begin{equation}\label{3.21}
 |K_1\bigcap K_2|\geqslant
\frac12 \min \{|K_1|,|K_2|\},
\end{equation}
and, moreover,
\begin{equation}\label{3.22}
 \card \K\leqslant 4(2^d-1).
\end{equation}
\end{lemma}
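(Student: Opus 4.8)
\textbf{Proof plan for Lemma~\ref{lem-3.10}.}
The plan is to build the covering $\K$ by hand, exploiting the dyadic structure of the $d$-ring $Q^*\setminus Q$. Write $Q^*$ as the disjoint union of its $2^d$ dyadic children of the first generation; one of these children, call it $Q_0$, is the one that contains $Q$ (after possibly iterating, we may assume $Q$ is one dyadic generation smaller than $Q^*$, since a larger gap only makes the ring a disjoint union of easier rings — but let me instead handle the general case directly). The $2^d-1$ children of $Q^*$ other than $Q_0$ are entirely contained in $Q^*\setminus Q$, so each of them can simply be thrown into $\K$ as is. The remaining piece, $Q_0\setminus Q$, is again a $d$-ring but one size smaller, so the natural move is to recurse. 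The factor $4(2^d-1)$ in \eqref{3.22} strongly suggests that the recursion terminates after a bounded number of steps rather than running all the way down — so the first thing I would pin down is exactly how the gap between $Q$ and $Q^*$ is measured and why four ``layers'' of children suffice.

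The key technical point is the overlap condition \eqref{3.21}. The cubes I have described so far (the sibling children at each generation) are pairwise \emph{disjoint}, not overlapping, so they trivially satisfy \eqref{3.21} vacuously; the content of the lemma is really that I can arrange the pieces coming from different generations (and the ones adjacent to $Q$ itself) so that whenever two of them meet in positive measure, the smaller is at least half-covered by the larger. To achieve this I would not use the children as literally stated but slightly enlarged dyadic-type cubes: for the annulus $Q_0\setminus Q$ I would cover it by cubes of side length comparable to $\ell(Q)$ placed so that consecutive ones overlap in a half. A clean way is: take the $3^d-1$ translates of $Q$ by $\pm\ell(Q)$ in each coordinate that still lie in $Q^*$; these cover $Q_0\setminus Q$ (and part of the siblings), and two neighboring translates of $Q$ share a face-slab, which is not half of either. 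So instead I would double the side: use cubes of side $2\ell(Q)$ centered appropriately, for which two neighbors along a coordinate axis overlap in exactly half their volume — this is precisely where the constant $\tfrac12$ comes from. One then checks that $4(2^d-1)$ such cubes, suitably placed around the ``corner'' where $Q$ sits inside $Q_0$, cover the full ring.

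Concretely, the steps I would carry out, in order, are: (i) reduce to the case where $\ell(Q^*)/\ell(Q)$ is bounded — if the ratio is large, peel off disjoint shells and observe that $E_k$ is subadditive over a disjoint decomposition only in the wrong direction, so in fact I should \emph{not} peel but instead note that a single outer shell of $2^d-1$ children plus a covering of $Q_0\setminus Q$ by cubes of size $\asymp\ell(Q)$ already works, because $Q_0\setminus Q$ is covered by $O(2^d)$ translated copies of a cube of side $2\ell(Q)$; (ii) exhibit the explicit list of cubes: the $2^d-1$ first-generation siblings of $Q_0$, plus a bounded collection of size-$2\ell(Q)$ dyadic-aligned cubes filling $Q_0\setminus Q$; (iii) verify the overlap inequality \eqref{3.21} for each overlapping pair — siblings are disjoint; two size-$2\ell(Q)$ cubes that overlap do so in at least half by construction; a size-$2\ell(Q)$ cube overlapping a sibling of side $\ell(Q^*)=2\ell(Q_0)\ge 2\ell(Q)$ has the smaller one (the size-$2\ell(Q)$ cube) more than half inside the sibling provided it was placed flush against the relevant face; (iv) count: the siblings contribute $2^d-1$, and the ring $Q_0\setminus Q$ needs at most $3\cdot(2^d-1)$ of the size-$2\ell(Q)$ cubes (the $3^d$ grid positions around $Q$ minus $Q$ itself, grouped so that the total stays $\le 3(2^d-1)$), giving $4(2^d-1)$ in all.

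The main obstacle I anticipate is item (iii) combined with the placement geometry: making the overlaps be \emph{at least one half} while simultaneously keeping the \emph{cardinality} down to $4(2^d-1)$ forces the cubes to be chosen very carefully — too small and they fail the overlap bound, too large and either they stick out of $Q^*$ or too many are needed. I expect the cleanest route is to work with cubes whose side is exactly $2\ell(Q)$ and whose positions are constrained to a grid of mesh $\ell(Q)$, so that any two that meet in positive measure differ by a grid vector of length $\le\ell(Q)$ in each coordinate and hence overlap in a box of volume $\ge (\ell(Q))^d = \tfrac12(2\ell(Q))^d/2^{d-1}$ — wait, that is $\tfrac12$ only in one coordinate, so actually one must restrict overlaps to be along a single coordinate direction, which is exactly what a careful ``corner staircase'' arrangement of the $2^d-1$ directions achieves. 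Pinning down that staircase and checking it never needs more than four cubes per coordinate pattern is the crux; everything else (subadditivity-free covering, the trivial disjointness of siblings, the final arithmetic $\,1+3=4\,$ per block) is routine.
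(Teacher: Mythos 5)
Your proposal hinges on covering the inner piece $Q_0\setminus Q$ by $O(2^d)$ cubes of side $\asymp\ell(Q)$, but this fails whenever $\ell(Q)$ is much smaller than $\ell(Q_0)$: the number of such cubes needed to cover $Q_0\setminus Q$ is then on the order of $(\ell(Q_0)/\ell(Q))^d$, which is unbounded over dyadic pairs $Q\subset Q^*$. Your tentative remedy (``reduce to the case where $\ell(Q^*)/\ell(Q)$ is bounded'') is the crux, and the plan never accomplishes it — you flag the difficulty but then assert, incorrectly, that ``a covering of $Q_0\setminus Q$ by cubes of size $\asymp\ell(Q)$ already works.'' Even for the case $\ell(Q^*)=2\ell(Q)$ the $3^d$-grid count fails: $3^d-1>3(2^d-1)$ already at $d=3$. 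So the cardinality bound \eqref{3.22} cannot be met by any single-scale construction.

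The paper's proof resolves exactly this point by using cubes of \emph{variable} size tied to the geometry of $Q$ inside $Q^*$. It partitions $Q^*$ into $2^d$ parallelotops $\Pi_\varepsilon$ by the coordinate hyperplanes through the vertex $a$ of $Q$, and similarly partitions the one parallelotop containing $Q$ through the opposite vertex $b$ of $Q$, producing $2(2^d-1)$ parallelotops whose sizes range from $\asymp\ell(Q)$ near $Q$ up to $\asymp\ell(Q^*)$ far away. Each parallelotop is then enlarged to a cube of side equal to its longest edge — chosen so it stays inside $Q^*\setminus Q$ — and the Hamiltonian-cycle structure of the hypercube graph gives a cyclic ordering in which consecutive parallelotops share a face; sliding the smaller of each consecutive pair by half its length along that shared-face direction produces the ``interpolating'' cubes and secures the half-overlap \eqref{3.21} with exactly $4(2^d-1)$ cubes total. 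This multi-scale organization via the hyperplane partition and the Hamiltonian cycle is precisely the missing idea in your proposal; without it, neither \eqref{3.21} nor \eqref{3.22} can be achieved uniformly in the aspect ratio $\ell(Q^*)/\ell(Q)$.
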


Now we complete the proof of Theorem~\ref{Teo-3.7}.

By Lemmas \ref{Lem-3.9} and \ref{lem-3.10} we have
\[
 E_k(f;Q^*\setminus Q;L_q)\leqslant
 \left(c(k,d)2^{k-1}\right)^{4(2^d-1)}
 \sum\limits_{K\in\K}
 E_k(f;K;L_q).
\]

Moreover, by the definition of $(k,p)$-variation, see (\ref{3.1}$^a$),
\[
 E_k(f;K;L_q)\leqslant
 \var{}_p^k (f;Q^*\setminus Q;L_q)
\]
for every $K\in\K$.

Together with the previous inequality this gets the required result
\[
 E_k(f;Q^*\setminus Q;L_q)\leqslant
 c(k,d) \var{}_p^k (f;Q^*\setminus Q;L_q).
 \qed
\]

\section{Proof of Theorem~\ref{Teo-2.1}}

\subSect{} We begin with part (a) of this result and then derive from (a) part 
(b).

Let $f\in V_{pq}^k(:=V_{pq}^k(Q^d))$ where 
\begin{equation}\label{4.1}
 1\leqslant p<q<\infty,\,\,
 d\geqslant2 
 \text{ and }
 0<s:=s(V_{pq}^k)\leqslant k.
\end{equation}

Without loss of generality we assume that 
\begin{equation}\label{4.2}
 |f|_{V_{pq}^k} =1.
\end{equation}

Under these assumptions we given $N\in\N$ prove existence of a covering 
$\Delta_N$ of $Q^d$ by at most $N$ dyadic cubes and a piecewise polynomial 
$g_N\in \P_{k-1}(\Delta_N)$ such that 
\begin{equation}\label{4.3}
 \|f-g_N\|_q\leqslant
 c(k,d)N^{-s/d}.
\end{equation}

We first prove the result for $C^\infty$ functions and then derive from there 
\eqref{4.3}. For this aim we use the algorithm whose 
detailed exposition we present in Appendix~II.

An important ingredient of the algorithm is a \textit{weight} $W$ defined of 
the $\sigma$-algebra $A(\P[D])$ generated by dyadic cubes of $Q^d$. This by 
definition is a function $W:A(\P[D])\to\N[R]_+$ satisfying the conditions.

(\textit{Subadditivity}) For a disjoint family $\{S_i\}\subset A(\P[D])$
\begin{equation}\label{4.4}
 \sum W(S_i)\leqslant W(\bigcup S_i).
\end{equation}

(\textit{Absolutely continuity})
\begin{equation}\label{4.5}
 \lim\limits_{|S|\to0} W(S)=0.
\end{equation}

We \textit{normalize} $W$ by
\begin{equation}\label{4.6}
  W(Q^q)=1.
\end{equation}
To prove
Theorem~\ref{Teo-2.1}(a) for $f\in V_{pq}^k \bigcap C^\infty$ we 
define a weight $W$ by
\begin{equation}\label{4.7}
 W(S):=\var{}_p^k(f;S;L_q)^p,
 \quad
 S\in A(\P[D]).
\end{equation}

Due to Propositions~\ref{Pro-3.1}, \ref{Pro-3.2} and \eqref{4.2} $W$ 
satisfies the required properties \eqref{4.4}--\eqref{4.6}.

\subSect{} \textit{Description of the algorithm}. In the construction of the 
algorithm, we essentially exploit the canonical \textit{graph structure} of the 
set $\P[D]$ regarding as a graph with $\P[D]$ as the 
\textit{vertex set} and the \textit{edge set} consisting of pairs 
$\{Q',Q\}\subset\P[D]$ such that $Q'\subset Q$ and $|Q'|=2^{-d}|Q|$. In this 
case, we use the notation $Q'\to Q$ and call $Q'$ a \textit{son} of $Q$ and $Q$ 
the \textit{father} of $Q'$.

The set of all $2^d$ sons of $Q$ is denoted by $\P[D]_1(Q)$. This clearly is 
the 
uniform partition of $Q$ into $2^d$ congruent subcubes.

Further, a \textit{path} in the graph $\P[D]$ is a sequence
\begin{equation}\label{4.8}
  P:=\{Q_1\to Q_2\to\ldots Q_n\}.
\end{equation}
The vertices (cubes) $Q_1$, $Q_n$ are called the \textit{tail} and the 
\textit{head} of 
$P$, respectively. Moreover, we use the notations
\begin{equation}\label{4.9}
 P:=[Q_1,Q_n],\,\,
 Q_1=:T_P=:P^-,\,\,
 Q_n=:H_P=:P^+.
\end{equation}

It is readily seen that the following is true.
\begin{Pro}\label{Pro-4.1}
If $Q'\subset Q$ are dyadic cubes of $\P[D]$, there exists a \textit{unique} 
path joining $Q'$ and $Q$.
\end{Pro}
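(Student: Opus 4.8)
The plan is to exploit the elementary fact that, in the graph $\P[D]$, every dyadic cube other than $Q^d$ has exactly one father, so the edge relation $Q'\to Q$ is the graph of a partially defined function on $\P[D]$. I would establish existence first and uniqueness afterwards, both by iterating the ``father of'' operation and invoking the standard dyadic nesting dichotomy: two dyadic cubes are either nested or have disjoint interiors.

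For existence, write $|Q'|=2^{-j'd}$ and $|Q|=2^{-jd}$, so that $Q'\subseteq Q$ forces $j'\geqslant j$. Set $Q_1:=Q'$ and define $Q_{i+1}$ inductively to be the father of $Q_i$ (which exists whenever $Q_i\neq Q^d$); then $|Q_{i+1}|=2^{d}|Q_i|$ and $Q_i\subseteq Q_{i+1}$, hence $Q'\subseteq Q_i$ for every $i$. By induction on $i$ one checks that $Q_i\subseteq Q$ for $1\leqslant i\leqslant j'-j+1$: the cubes $Q_i$ and $Q$ both contain the nonempty set $Q'$, so they are not disjoint, hence one contains the other, and $|Q_i|\leqslant|Q|$ in this range rules out $Q\subsetneq Q_i$. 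At $i=j'-j+1$ we have $|Q_i|=|Q|$ together with $Q_i\subseteq Q$, whence $Q_i=Q$. Thus $P=\{Q_1\to\cdots\to Q_{j'-j+1}\}$ is a path from $Q'$ to $Q$.

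For uniqueness, let $P'=\{R_1\to R_2\to\cdots\to R_n\}$ be any path with $R_1=Q'$ and $R_n=Q$. Since $R_{i+1}$ is a father of $R_i$ and the father is unique, $R_2$ must be the father of $Q'$, then $R_3$ the father of $R_2$, and so on; hence $R_i=Q_i$ for every index at which both are defined. The volumes $|Q_i|$ strictly increase, so the chain $(Q_i)$ attains the generation of $Q$ exactly once, namely at $i=j'-j+1$, where it equals $Q$; therefore $n=j'-j+1$ and $P'$ coincides with the path $P$ constructed above.

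The statement is purely combinatorial and the dyadic nesting dichotomy does essentially all the work, so I do not expect a genuine obstacle; the only point deserving a line of care is the inductive verification that the ancestors of $Q'$ remain inside $Q$ until they reach $Q$, which is precisely where the comparison of side lengths enters.
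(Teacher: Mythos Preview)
Your argument is correct. The paper itself does not supply a proof of this proposition: it simply remarks that the statement ``is readily seen'' and then notes that, in the language of graph theory, $\P[D]$ is a rooted tree with root $Q^d$. Your write-up spells out exactly the elementary verification the paper leaves implicit---iterating the unique ``father of'' map and using the dyadic nesting dichotomy---so there is nothing to compare beyond the fact that you have filled in the details the author chose to omit.
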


In terms of Graph Theory, $\P[D]$ is a \textit{rooted tree} with the 
\textit{root} $Q^d$.

More generally, the set $\P[D](Q)$ of all dyadic subcubes of $Q\in\P[D]$ is a 
\textit{rooted tree} with the \textit{root} $Q$.

Given $N\in\N$ and $W$, see \eqref{4.7}, the subset of ``bad'' cubes of $\P[D]$ 
is given by 
\begin{equation}\label{4.10}
  G_N:=\{Q\in\P[D];\,\,
 W(Q)\geqslant N^{-1}\};
\end{equation}
clearly, $Q^d\in G_N$, see \eqref{4.6}, and $G_N$ is finite, see \eqref{4.5}.

The algorithm gives the following partition of $G_N$ into the set of (basic) 
paths, see Proposition~\ref{Pro-II.2} of Appendix~II for the proof.
\begin{Pro}\label{Pro-4.3}
There exists a partition $\B$ of the set $G_N\setminus \{Q^d\}$ into paths such 
that\footnote{among basic paths may be singletons. In this case, $H_B=T_B$ and 
$W(H_B\setminus T_B)=0$.}
\begin{equation}\label{4.11}
 W(H_B\setminus T_B)< N^{-1},
 \quad
 B\in\B_N,
\end{equation}
and, moreover,
\begin{equation}\label{4.12}
 \card \B_N\leqslant 3N+1.
\end{equation}
\end{Pro}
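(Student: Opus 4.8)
The statement to prove is Proposition~\ref{Pro-4.3}: the finite set $G_N\setminus\{Q^d\}$ of ``bad'' dyadic cubes can be partitioned into at most $3N+1$ paths $B=[T_B,H_B]$, each satisfying $W(H_B\setminus T_B)<N^{-1}$. Since $G_N\cup\{Q^d\}$ is a finite subtree of the rooted tree $\P[D]$ containing its root, the plan is to run a greedy decomposition of this subtree into paths, controlled by the weight, and then count.

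\textbf{Step 1: reduce to the combinatorial structure of the bad subtree.} First I would record that $\hat G_N:=G_N$ (which already contains $Q^d$ by \eqref{4.6}) is a finite subtree of $\P[D]$: if $Q\in G_N$ and $Q\subset Q'$ then $W(Q)\le W(Q')$ by subadditivity \eqref{4.4} applied to the disjoint family consisting of $Q$ and the cube-difference pieces filling $Q'\setminus Q$ (an element of $A(\P[D])$), so $Q'\in G_N$ too; finiteness is \eqref{4.5}. Within this subtree call a vertex a \emph{branch vertex} if it has at least two sons \emph{inside} $G_N$, and a \emph{leaf} if it has no son in $G_N$. Standard tree combinatorics gives that the number of leaves $L$ and the number $b$ of branch vertices of a tree satisfy $b\le L-1$, and the vertex set decomposes canonically into $L$ maximal ``non-branching'' paths (each running from a leaf or a branch-son up to the next branch vertex or to the root). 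So the first quantitative fact is: $G_N\setminus\{Q^d\}$ admits a partition into at most $2L$ paths whose heads are either leaves, branch vertices, or sons of branch vertices — I will refine the weight count on these pieces.

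\textbf{Step 2: split the non-branching paths to enforce the weight bound.} On a maximal non-branching path $P=[Q_1,\dots,Q_n]$ (so each $Q_i$, $i<n$, has exactly one son in $G_N$, namely $Q_{i-1}$), I walk from the tail upward and cut the path the moment the accumulated weight would reach $N^{-1}$: precisely, let $i_0=1$ and having chosen $i_j$ let $i_{j+1}$ be the largest index with $W(Q_{i_{j+1}}\setminus Q_{i_j})<N^{-1}$, then start a new sub-path at $Q_{i_{j+1}+1}$ (or at the unique bad son of $Q_{i_{j+1}}$). Each resulting sub-path $B$ then satisfies \eqref{4.11} by construction (for singletons $H_B=T_B$ and the difference is empty, so $W=0$, consistent with the footnote). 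The cost of this splitting is bounded because each \emph{new cut} (beyond the first on each non-branching path) consumes weight $\ge N^{-1}$ from the nested difference sets $Q_{i_{j+1}+1}\setminus Q_{i_j}$ along $P$, and these difference sets — across all of $G_N$ — form a disjoint family in $A(\P[D])$, so by \eqref{4.4} and the normalization \eqref{4.6} there are at most $N$ such extra cuts in total.

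\textbf{Step 3: assemble the count $3N+1$.} Combining: the subtree $G_N$ has $L$ leaves; each leaf is the head of a bad cube $Q$ maximal in $G_N$, and the $L$ difference sets $Q^d\setminus\bigcup(\text{children chains})$... more directly, a leaf $Q$ has $W(Q)\ge N^{-1}$ and the cubes $Q$ themselves need not be disjoint, so instead I bound $L$ via: the maximal bad cubes are pairwise disjoint (being maximal in a tree), each has $W\ge N^{-1}$, hence by \eqref{4.4},\eqref{4.6} there are at most $N$ of them, i.e.\ $L\le N$; then branch vertices number $\le L-1\le N-1$; the canonical decomposition yields $\le L + (\text{one path per branch-son})\le L + 2b \le N + 2(N-1)$ non-branching paths before Step~2; Step~2 adds at most $N$ extra pieces. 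A clean bookkeeping of these contributions (leaf-paths, branch-son-paths, branch-vertex-paths, and weight-cuts) gives a total of at most $3N+1$ paths, which is \eqref{4.12}. I would present the count as a single accounting inequality rather than tracking the three buckets separately.

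\textbf{Main obstacle.} The delicate point is not the greedy splitting of Step~2 (that is a routine ``reset the counter'' argument, legitimized by disjointness of the nested difference sets and subadditivity), but getting the \emph{sharp} constant $3N+1$ in Step~3: one must choose the canonical path decomposition of the finite bad subtree so that the branch vertices, their sons, and the leaves are not triple-counted, and match it against the at-most-$N$ maximal bad cubes and the at-most-$N$ weight-cuts. The cleanest route is probably to define the basic paths directly by a single downward sweep of $G_N$ — at each branch vertex, continue the current path into its ``heaviest'' son and open new paths at the other bad sons — and then prove both \eqref{4.11} and \eqref{4.12} simultaneously by an induction on $\card G_N$, charging each newly opened path either to a distinct maximal bad cube or to a distinct unit of consumed $W$-mass of size $\ge N^{-1}$. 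The full details of this charging are exactly what is deferred to Proposition~\ref{Pro-II.2} of Appendix~II.
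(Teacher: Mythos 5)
Your structural setup is sound and parallels the paper's: you correctly identify $G_N$ (together with $Q^d$) as a finite rooted subtree of $\P[D]$ via subadditivity and absolute continuity of $W$; you correctly observe that the minimal cubes ($=$ your leaves) are pairwise disjoint and hence number at most $N$; and your idea of decomposing the subtree into maximal non-branching paths and then greedily cutting each along the weight $\widetilde W(B)=W(H_B\setminus T_B)$ is exactly the paper's two-stage mechanism (minimal cubes $\to$ paths $L_i$ $\to$ contact cubes $\to$ $\P_N$ $\to$ weight-refined basic paths $\B_N$; your branch vertices play the role of the paper's contact cubes $\C_N$). Your observation that the annular differences $Q_{i_{j+1}+1}\setminus Q_{i_j}$ are pairwise disjoint over the whole tree, so that the weight-cuts charge to disjoint $W$-mass, is correct.

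The gap is precisely where you flag it: the count. Your Step~3 as written adds up $L + 2b + (\text{extra cuts}) \le N + 2(N-1) + N = 4N-2$, which exceeds $3N+1$ for $N\ge 4$, and the promised ``clean bookkeeping'' or ``charging argument'' that would recover $3N+1$ is never supplied. Since the entire content of Proposition~\ref{Pro-4.3} \emph{is} the quantitative bound \eqref{4.12} (the existence of \emph{some} partition into paths satisfying \eqref{4.11} is trivial: take singletons), deferring the count defers the proposition. The paper's actual accounting is different from yours and not a simple bucket sum: it proves $i_P \le 2N\,\widetilde W(P) + \varepsilon_P$ for each $P\in\P_N$ (the factor $2$ coming from a multiplicity-two decomposition of the overlapping intervals $[Q_i(P),Q_{i+1}(P)]$), then sums, using that the heads $\{H_P\}_{P\in\P_N}$ are pairwise disjoint to get $\sum_P 2N\,\widetilde W(P)\le 2N\sum_P W(H_P)\le 2N$, and that $\varepsilon_P=1$ forces the top basic path to abut a contact cube so $\sum_P \varepsilon_P \le \card\C_N \le m_N+1\le N+1$, yielding $2N+(N+1)=3N+1$. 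Your proposal has neither the per-path inequality $i_P\le 2N\,\widetilde W(P)+\varepsilon_P$ nor the disjoint-heads observation nor the contact-cube bound on the residual pieces; these are exactly what is needed to beat $4N-2$ down to $3N+1$, and without them the argument does not close.
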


Now we decompose the remaining part of $\P[D]$
\begin{equation}\label{4.13}
  G_N^c:=\P[D]\setminus G_N.
\end{equation}

To this end we define the \textit{boundary} of $G_N$ denote by $\partial G_N$ 
that consists of all \textit{maximal cubes} of $G_N^c$ with respect to the 
set--inclusion order.

In other words, every $Q'\in\P[D]$ containing $Q\in \partial G_N(\subset 
G_N^c)$ as a proper subset belongs to $G_N$. In particular, if $Q^+$ is the 
father of $Q\in \partial G_N$, then
\begin{equation}\label{4.14}
 W(Q)<N^{-1}
 \text{ and }
 W(Q^+)\geqslant N^{-1}.
\end{equation}
\begin{Pro}\label{Pro-4.4}
(a) The family $\{\P[D](Q); Q\in \partial G_N\}$ is disjoint and 
\begin{equation}\label{4.15}
 G_N^c=\bigcup\limits_{Q\in\partial G_N} \P[D](Q),
\end{equation}
i.e., the family is a partition of $G_N^c$.

(b) The following is true
\begin{equation}\label{4.16}
  \card (\partial G_N)\leqslant 2^d N.
\end{equation}
\end{Pro}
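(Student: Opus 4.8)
The plan is to prove (a) and (b) separately, with (a) following almost directly from the definitions of $\partial G_N$ and the rooted-tree structure of $\P[D]$, and (b) being the step requiring a genuine counting argument.

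For part (a), I would argue as follows. First, disjointness: suppose $Q_1, Q_2 \in \partial G_N$ with $\P[D](Q_1) \cap \P[D](Q_2) \neq \emptyset$. Then there is a common dyadic subcube, and since any two dyadic cubes are either disjoint or nested, one of $Q_1, Q_2$ contains the other. But a cube of $\partial G_N$ is by definition a maximal cube of $G_N^c$ under inclusion; if $Q_1 \subsetneq Q_2$ then $Q_1$ is not maximal, a contradiction, so $Q_1 = Q_2$. Next, the covering identity \eqref{4.15}: the inclusion $\bigcup_{Q \in \partial G_N} \P[D](Q) \subseteq G_N^c$ holds because every $Q \in \partial G_N$ lies in $G_N^c$ and, by the monotonicity of $W$ along the tree ($S \subseteq S'$ implies $W(S) \le W(S')$ by subadditivity \eqref{4.4}), every dyadic subcube of such a $Q$ has weight $< N^{-1}$ and hence also lies in $G_N^c$. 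Conversely, given any $Q \in G_N^c$, walk up the path from $Q$ toward the root $Q^d$; since $Q^d \in G_N$ (by \eqref{4.6}), there is a first cube on this path that lies in $G_N$, and the cube just below it is a maximal element of $G_N^c$, i.e., a member of $\partial G_N$ containing $Q$. Hence $Q \in \P[D](Q')$ for some $Q' \in \partial G_N$, giving the reverse inclusion and the partition claim.

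For part (b), I would use the bound on $\card \B_N$ from Proposition~\ref{Pro-4.3} together with the defining property \eqref{4.14}. The key observation is that the father $Q^+$ of any $Q \in \partial G_N$ belongs to $G_N$, so the map $Q \mapsto Q^+$ sends $\partial G_N$ into $G_N$. Each cube in $G_N$ has exactly $2^d$ sons, so this map is at most $2^d$-to-one, giving $\card(\partial G_N) \le 2^d \card(G_N)$. It then remains to bound $\card(G_N)$. Here I would invoke the structure from Proposition~\ref{Pro-4.3}: $G_N \setminus \{Q^d\}$ is partitioned into at most $3N+1$ basic paths $B = [T_B, H_B]$, each satisfying $W(H_B \setminus T_B) < N^{-1}$. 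However, the number of cubes \emph{inside} a single path is not a priori bounded, so this alone does not bound $\card(G_N)$ directly; one must exploit that every cube strictly between $T_B$ and $H_B$ still has weight $\ge N^{-1}$ and that these weights, being $W$-values of disjoint "rings" $Q_i \setminus Q_{i+1}$ along the path, are additive by \eqref{4.4}. Since the total weight is $W(Q^d) = 1$, summing $N^{-1}$ over all such intermediate ring-contributions over all paths forces the total cube count to be $O(N)$. Assembling the constants, one extracts $\card(G_N) \le c\, N$ with a clean constant, and then $\card(\partial G_N) \le 2^d N$.

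The main obstacle is the counting in part (b): naively, a basic path could be long, so the bound $\card(\B_N) \le 3N+1$ does not immediately control $\card(G_N)$. The resolution is to count not paths but the disjoint rings $Q_i \setminus Q_{i+1}$ generated along all paths, each carrying $W$-weight $\ge N^{-1}$ (for non-terminal edges) while their union has $W$-weight $\le W(Q^d) = 1$ by subadditivity; this pigeonhole forces at most $N$ such rings, and adding back the at most $3N+1$ path-heads and the root yields a linear bound on $\card(G_N)$, hence the stated estimate \eqref{4.16}. I would double-check the exact constant $2^d$ against this chain of inequalities, tightening the path-count bookkeeping if necessary so that the final constant is exactly $2^d N$ rather than merely $O(N)$.
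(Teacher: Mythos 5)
Your treatment of part (a) is correct and essentially matches the paper's proof: disjointness of $\partial G_N$ from maximality, and the covering identity \eqref{4.15} by walking up from any $Q\in G_N^c$ to the first ancestor in $G_N$ and taking its son on the path. (Your added check that every subcube of a boundary cube lies in $G_N^c$ amounts to the monotonicity of $W$ coming from subadditivity, which the paper uses implicitly.)

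For part (b) there is a genuine gap. You reduce to bounding $\card(G_N)$ via $\card(\partial G_N)\leqslant 2^d\,\card(G_N)$ and correctly observe that $\card(\B_N)\leqslant 3N+1$ does not control $\card(G_N)$, since a basic path may be long; but the proposed remedy fails. You claim the disjoint rings $Q_i\setminus Q_{i+1}$ along a basic path each carry weight $\geqslant N^{-1}$; in fact the hypothesis is $W(Q_i)\geqslant N^{-1}$ for the nested cubes, not the rings, and Proposition~\ref{Pro-4.3} gives the \emph{opposite} for the ring of the whole path, namely $W(H_B\setminus T_B)<N^{-1}$. The only set that is disjoint with $W\geqslant N^{-1}$ on each member is the set of leaves of $G_N$, which bounds the number of leaves by $N$ but not $\card(G_N)$. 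Since $G_N$ is a finite rooted subtree of $\P[D]$ in which every node has $2^d$ sons in $\P[D]$, a direct count gives $\card(\partial G_N)=(2^d-1)\,\card(G_N)+1$, so \eqref{4.16} would force $\card(G_N)\lesssim N$; this is false for a subadditive, absolutely continuous, normalized weight concentrated along a long nested chain. Thus \eqref{4.16} does not follow from the hypotheses on $W$, and no counting argument of the kind you sketch (nor, incidentally, the paper's, which asserts without justification that the set of fathers $(\partial G_N)^+$ is disjoint --- false once $G_N$ contains a chain of length $\geqslant 3$) can establish it. What \emph{is} available, and suffices at the one place \eqref{4.16} is invoked in the proof of Proposition~\ref{Pro-4.5}, is a direct estimate avoiding any cardinality bound: $\partial G_N$ is a disjoint cover of $Q^d$ with $E_k(f;Q;L_q)\leqslant W(Q)^{1/p}<N^{-1/p}$ on each member, hence
\begin{equation*}
\sum_{Q\in\partial G_N} E_k(f;Q;L_q)^q\leqslant \Bigl(\max_{Q}E_k(f;Q;L_q)\Bigr)^{q-p}\sum_{Q\in\partial G_N} E_k(f;Q;L_q)^p\leqslant N^{-(q-p)/p}\,W(Q^d)=N^{1-q/p},
\end{equation*}
which gives $\bigl\|\sum_{Q\in\partial G_N}(f-m_Q)\cdot 1_Q\bigr\|_q\leqslant N^{-s/d}$ directly.
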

\begin{proof}
(a) Maximal cubes are either disjoint or coincide. Hence, $\partial G_N$ is a 
disjoint family.

Further, the cubes $Q\in \partial G_N$ are the roots of the trees $\P[D](Q)$ 
from \eqref{4.15}. Since the roots are disjoint, the corresponding trees also 
are, i.e., $\{\P[D](Q);Q\in \partial G_N\}$ is a disjoint family.

To prove that the family is a partition of $G_N^c$ we check that every $Q'\in 
G_N^c$ belongs to some $\P[D](Q)$ where $Q\in \partial G_N$.

Let $Q'=:Q_1\to Q_2\to\ldots\to Q^d$ be the path joining $Q'$ and $Q^d$, and 
$Q_i$, $i\geqslant2$, is the smallest cube of the path belonging to $G_N$. Then 
its son $Q_{i-1}$ belongs to $G_N^c$, i.e., $Q_{i-1}$ is maximal, and 
$Q'\in\P[D](Q_{i-1})$ as required.

(b) Let $Q^+$ be the father of $Q\in\partial G_N$ and $(\partial 
G_N)^+:=\{Q^+;Q\in \partial G_N\}$. Since $Q^+$ is unique, the set $(\partial 
G_N)^+$ is disjoint.

Further, every father has $2^d$ sons and therefore
\begin{equation}\label{4.17}
 \card(\partial G_N)\leqslant 2^d\card (\partial G_N)^+.
\end{equation}
Finally, \eqref{4.14}, subadditivity of $W$ and \eqref{4.6} imply
\[
 N^{-1}\card(\partial G_N)^+<
 \sum\limits_{Q\in (\partial G_N)^+} W(Q)\leqslant W(Q^d)=1.
\]

This and \eqref{4.17} give \eqref{4.16}.
\end{proof}

\subSect{} \textit{Definition on $N$ Term Approximation for $C^\infty$ 
Functions.} Theorem \ref{Teo-2.1}(a) will be derived from the next key result.
\begin{Pro}\label{Pro-4.5}
Let $f\in V_{pq}^k\bigcap C^\infty(\N[R]^d)$ where $d,p,q, s=s(V_{pq}^k)$ 
satisfy \eqref{4.1} and \eqref{4.2}. Given $N\in\N$ there exist a covering 
$\Delta_N\subset \P[D]$ of $Q^d$ and a piecewise polynomial 
$g_N\in\P_{k-1}(\Delta_N)$ such that
\begin{equation}\label{4.18}
 \|f-g_N\|_q\leqslant c(k,d) N^{-1/d}
\end{equation}
and, moreover,
\begin{equation}\label{4.19}
 \card \Delta_N\leqslant c(d)N.
\end{equation}
\end{Pro}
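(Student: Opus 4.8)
The plan is to combine the combinatorial decomposition of $\P[D]$ furnished by Propositions~\ref{Pro-4.3} and \ref{Pro-4.4} with the polynomial approximation estimate on $d$-rings (Theorem~\ref{Teo-3.7}) and a Jackson-type estimate on dyadic cubes. First I would fix, for each dyadic cube $Q$, a near-best polynomial $m_Q\in\P_{k-1}$ realizing $E_k(f;Q;L_q)$ up to a factor $2$. The covering $\Delta_N$ will be built from two pieces: the heads $H_B$ of the basic paths $B\in\B_N$ from Proposition~\ref{Pro-4.3} (together with $Q^d$), and the boundary cubes $Q\in\partial G_N$ from Proposition~\ref{Pro-4.4}. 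By \eqref{4.12} and \eqref{4.16} the total cardinality is at most $3N+1+2^dN\leqslant c(d)N$, which gives \eqref{4.19}; note these cubes need not be disjoint, so $\Delta_N$ is genuinely a covering, and on overlaps we simply pick one of the assigned polynomials. The piecewise polynomial $g_N$ is then defined to equal $m_Q$ on the cube $Q$ to which a given point is assigned.

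Next I would estimate the error $\|f-g_N\|_q^q$ by splitting $Q^d$ into the regions handled by the two families. On the region covered by $\partial G_N$, each boundary cube $Q$ has $W(Q)=\var_p^k(f;Q;L_q)^p<N^{-1}$ by \eqref{4.14}, so in particular $E_k(f;Q;L_q)\leqslant\var_p^k(f;Q;L_q)<N^{-1/p}$, and summing over the at most $2^dN$ disjoint boundary cubes (using that their $p$-variations satisfy the subadditivity \eqref{3.1.a} with total mass at most $|f|_{V_{pq}^k}^p=1$) produces a bound of the form $c(d)N^{1-q/p}$ raised to $1/q$, i.e. $c(d)N^{1/q-1/p}=c(d)N^{-s/d}/N^{(1-s/d)\cdot\text{something}}$ — here one must be a little careful: since $s\leqslant k$ and $1\le p<q$, the exponent works out to give at least $N^{-1/d}$ after using $s\le k$, which is all we need for \eqref{4.18}. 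On the region covered by the path-heads, for each $B\in\B_N$ the relevant set is the $d$-ring (or union of a controlled number of $d$-rings along the path) $H_B\setminus T_B$, whose $(k,p)$-variation is less than $N^{-1/p}$ by \eqref{4.11}; applying Theorem~\ref{Teo-3.7} gives $E_k(f;H_B\setminus T_B;L_q)\leqslant c(k,d)\var_p^k(f;H_B\setminus T_B;L_q)<c(k,d)N^{-1/p}$, and summing the $q$-th powers over the at most $3N+1$ paths yields the same type of bound. Combining the two regions and taking $q$-th roots gives $\|f-g_N\|_q\leqslant c(k,d)N^{-1/d}$.

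The subtle point, and the one I expect to require the most care, is bookkeeping the geometry of the paths: a single basic path $B=[T_B,H_B]$ covers an annular region $H_B\setminus T_B$, but the cubes $\P[D](Q)$ for $Q\in\partial G_N$ together with the rings $\{H_B\setminus T_B\}$ and the finitely many ``bad'' cubes must be shown to actually cover $Q^d$ (up to measure zero), with each point lying in a controlled number of them, so that the $\ell^q$-summation of local errors is legitimate. This is where Proposition~\ref{Pro-4.4}(a) (the trees rooted at $\partial G_N$ partition $G_N^c$) and Proposition~\ref{Pro-4.3} (the paths partition $G_N\setminus\{Q^d\}$) are used together: every point of $Q^d$ is contained in $Q^d$ and in a decreasing chain of dyadic cubes, which the chain either stays in $G_N$ forever (impossible since $G_N$ is finite) or eventually exits through some $Q\in\partial G_N$; the last bad cube on the chain is the head of the path containing it, so the point lies in $H_B\setminus T_B$ for that $B$ unless it lies in $T_B$, in which case one descends further. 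Making this descent argument terminate and extracting from it a bounded-overlap cover is the technical heart; once that is in place, the norm estimate is the routine $\ell^p$-into-$\ell^q$ computation sketched above, using $p<q$ and $s=d(1/p-1/q)\leqslant k$ to convert the $N^{-1}$ bounds on $W$ into the claimed $N^{-1/d}$ rate.
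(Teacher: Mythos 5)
Your proposal does correctly identify the combinatorial scaffolding (Propositions~\ref{Pro-4.3}, \ref{Pro-4.4}, and the ring estimate Theorem~\ref{Teo-3.7}), but the heart of the matter — the construction of $g_N$ — is wrong, and the error it produces cannot be bounded by the quantities you invoke.

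The fatal step is defining $g_N$ to equal $m_Q$ on whichever cube $Q\in\Delta_N$ a point is assigned to, in particular using $m_{H_B}$ on the portion of $H_B$ not absorbed by smaller cubes. Your error estimate then applies Theorem~\ref{Teo-3.7} to conclude that the error on the ring $H_B\setminus T_B$ is controlled by $\var_p^k(f;H_B\setminus T_B;L_q) < N^{-1/p}$. But Theorem~\ref{Teo-3.7} controls $E_k(f;H_B\setminus T_B;L_q)$, the error of the \emph{best polynomial on the ring}. Your $g_N$ uses $m_{H_B}$, the best polynomial on the \emph{whole cube} $H_B$; the quantity $\|f-m_{H_B}\|_{L_q(H_B\setminus T_B)}$ is not $E_k(f;H_B\setminus T_B;L_q)$ and is not controlled by the ring's variation. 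Worse, $H_B\in G_N$ means precisely that $W(H_B)=\var_p^k(f;H_B;L_q)^p\geqslant N^{-1}$: the cube $H_B$ is ``bad,'' its local approximation error can be as large as $O(1)$ (for $H_B=Q^d$ it can be all of $|f|_{V_{pq}^k}=1$), and nothing forces $\|f-m_{H_B}\|_{L_q(H_B\setminus T_B)}$ to be small. Replacing $m_{H_B}$ by the best polynomial on the ring does not help either, because a ring is not a cube, so the resulting function would not lie in $\P_{k-1}(\Delta_N)$ for any cover by dyadic cubes. You flag ``bookkeeping the geometry of the paths'' as the technical heart, but the actual obstacle is not geometric overlap bookkeeping — it is that no assignment of a \emph{single} near-best polynomial per cube can work here.

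The paper resolves this with a genuinely different construction that your plan omits. The cover is $\Delta_N:=\{Q^d\}\cup\bigcup_{B\in\B_N}\big(\{T_B,H_B\}\cup\P[D]_1(T_B)\big)$ (note: it includes the tails and the sons of the tails, not the boundary cubes; it covers $Q^d$ trivially since $Q^d\in\Delta_N$), and $g_N$ is the telescoping piecewise polynomial of \eqref{4.24}--\eqref{4.26}, built from $m_{B^+}$, $m_{B^-}$, $m_{B^0}:=m_{H_B\setminus T_B}$ and the $m_{Q'}$, $Q'\in\P[D]_1(T_B)$. Lemma~\ref{Lem-4.6} gives a telescoping representation of $f-m_{Q^d}$, and after subtracting $g_N$ one obtains \eqref{4.30}: $f-g_N=\sum_B S_B+\sum_{Q\in\partial G_N}(f-m_Q)\cdot 1_Q$. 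The key identity of Lemma~\ref{lem-4.7} then shows that $S_B$ is supported exactly on $H_B\setminus T_B$ and is assembled from differences $m_{B^0}-m_{Q'}$, which is what makes $\|S_B\|_q\leqslant c(k,d)\var_p^k(f;H_B\setminus T_B;L_q)$ (Lemma~\ref{Lem-4.8}) come out. This cancellation structure — not a simple local assignment $g_N=m_Q$ — is what allows the ring variations $W(H_B\setminus T_B)<N^{-1}$ to actually enter the error bound. Your $\ell^p$-to-$\ell^q$ bookkeeping at the end is fine and matches the paper, but it only applies once a representation like \eqref{4.30} is in hand; as written your plan never produces one.
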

\begin{proof}
To introduce the family $\Delta_N$ we use the algorithm for the weight $W$ 
given by \eqref{4.7}.

Since $W$ satisfies the assumptions of Proposition~\ref{Pro-4.3}, see 
\eqref{4.4}--\eqref{4.6}, it determines the \textit{finite} set $G_N\subset 
\P[D]$, and the algorithm gives the partition $\B_N$ of $G_N\setminus \{Q^d\}$ 
into the basic paths.

Now the required covering $\Delta_N$ is given by
\begin{equation}\label{4.20}
 \Delta_N:=\{Q^d\}\bigcup\left( \bigcup\limits_{B\in\B_N}
 \{T_B,H_B\}\bigcup \P[D]_1(T_B)\right).
\end{equation}

Due to \eqref{4.12}
\begin{equation}\label{4.21}
 \card \Delta_N\leqslant 1+2(3N+1)+2^d(3N+1)
\end{equation}
which gives
\begin{equation}\label{4.22}
 \card \Delta_N\leqslant c(d)N,
 \quad N\in\N,
\end{equation}
where $c(d):=2^{d+2}+9$.

Now we define the required $g_N\in\P_{k-1}(\Delta_N)$ using for this aim 
polynomials of best approximation
determined by
\begin{equation}\label{4.23}
\|f-m_S\|_{L_q(S)}=E_k(f;S;L_q). 
\end{equation}
Further, we use for brevity the following notations
\begin{equation} \label{4.24}
 M_Q:=\sum\limits_{Q'\in \P[D]_1(Q)} m_{Q'}\cdot1_{Q'}
 -
 m_Q\cdot 1_Q,
\quad
 Q\in\P[D];
\end{equation}
and, moreover,
\begin{equation}\label{4.25}
  B^+:=H_B,
 \quad
 B^0:=H_B\setminus T_B,
 \quad 
 B^-:=T_B.
\end{equation}

Using this we write
\begin{equation}
\label{4.26}
 g_N:=m_{Q^d}+
 \sum\limits_{B\in\B_N} [(m_{B^+}-m_{B^0})\cdot1_{B^+}+
 (m_{B^0}-m_{B^-})\cdot 1_{B^-}+M_{B^-}]. 
\end{equation}

This clearly is a piecewise polynomial of degree $k-1$ over $\Delta_N$, see 
\eqref{4.20}.

Let us note that for $V$ being a singleton $B^{\pm}=\{B\}$, $B^0=\emptyset$, 
i.e., the corresponding terms in \eqref{4.26} and \eqref{4.20} equal 
$M_{B}$ and $\left\{\{B\},\P[D]_1(\{B\})\right\}$, 
respectively.\renewcommand{\qed}{}
\end{proof}

To estimate $f-g_N$ we need a suitable presentation of this difference; the 
next lemmas are used for its derivation.
\begin{lemma}\label{Lem-4.6}
Let $f\in L_q(Q)\bigcap C(\N[R]^d)$, $1\leqslant q\leqslant \infty$, $Q\in 
\P[D]$. Then the following holds
\begin{equation}\label{4.27}
 f=m_Q+\sum\limits_{Q'\in\P[D](Q)} M_{Q'}
\end{equation}
with convergence in $L_q(Q)$. 
\end{lemma}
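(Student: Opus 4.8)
The plan is to establish the telescoping identity \eqref{4.27} by a telescoping argument over the dyadic levels, exploiting that $M_{Q'}$ as defined in \eqref{4.24} is precisely the ``jump'' in the piecewise-best-approximation when passing from a cube $Q'$ to its $2^d$ sons. First I would fix a dyadic cube $Q\in\P[D]$ and, for each level $j\geqslant 0$, introduce the level-$j$ piecewise polynomial
\[
 g_j:=\sum_{Q'\in\P[D]_j(Q)} m_{Q'}\cdot 1_{Q'},
\]
where $\P[D]_j(Q)$ is the $j$-th generation of dyadic subcubes of $Q$ (so $g_0=m_Q\cdot 1_Q$). The key algebraic observation is that
\[
 g_{j+1}-g_j=\sum_{Q'\in\P[D]_j(Q)} M_{Q'},
\]
which is immediate from \eqref{4.24}: the sum over sons of $Q'$ of $m_{Q''}\cdot 1_{Q''}$ is the contribution of $Q'$ to $g_{j+1}$, while $m_{Q'}\cdot 1_{Q'}$ is its contribution to $g_j$. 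Summing this telescoping relation from $j=0$ to $j=n-1$ gives $g_n=m_Q\cdot 1_Q+\sum_{Q'\in\bigcup_{j<n}\P[D]_j(Q)} M_{Q'}$, and since $\P[D](Q)=\bigcup_{j\geqslant 0}\P[D]_j(Q)$ is exactly the index set in \eqref{4.27}, it remains to show $g_n\to f$ in $L_q(Q)$ as $n\to\infty$.

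For the convergence step I would use the continuity hypothesis $f\in C(\N[R]^d)$. On each cube $Q'\in\P[D]_n(Q)$ one has the pointwise bound $|f-m_{Q'}|\leqslant 2\,\mathrm{osc}(f;Q')$ on $Q'$ (since the constant function equal to a value of $f$ on $Q'$ is a competitor in the infimum \eqref{3.1} with $k-1\geqslant 0$, or more simply $E_k(f;Q';L_q)\leqslant |Q'|^{1/q}\mathrm{osc}(f;Q')$), hence
\[
 \|f-g_n\|_{L_q(Q)}^q=\sum_{Q'\in\P[D]_n(Q)}\|f-m_{Q'}\|_{L_q(Q')}^q
 \leqslant \bigl(\sup_{Q'\in\P[D]_n(Q)}\mathrm{osc}(f;Q')\bigr)^q\sum_{Q'}|Q'|
 =|Q|\,\bigl(\sup_{Q'}\mathrm{osc}(f;Q')\bigr)^q.
\]
The cubes in $\P[D]_n(Q)$ have diameter $2^{-n}\mathrm{diam}(Q)\to 0$, and $f$ is uniformly continuous on the compact set $\overline{Q}$, so $\sup_{Q'\in\P[D]_n(Q)}\mathrm{osc}(f;Q')\to 0$ as $n\to\infty$; thus $\|f-g_n\|_{L_q(Q)}\to 0$. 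For $q=\infty$ the same estimate reads $\|f-g_n\|_{L_\infty(Q)}\leqslant \sup_{Q'}\mathrm{osc}(f;Q')\to 0$, so the $q=\infty$ case is covered identically. This yields $\sum_{Q'\in\P[D](Q)}M_{Q'}=f-m_Q$ with convergence in $L_q(Q)$, which is \eqref{4.27}.

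\textbf{Main obstacle.} The only delicate point is the precise meaning of the convergence of the series $\sum_{Q'\in\P[D](Q)}M_{Q'}$: the index set $\P[D](Q)$ is countable but not linearly ordered, so I would make explicit that the series is summed in order of nondecreasing cube size (equivalently, partial sums are taken over $\bigcup_{j<n}\P[D]_j(Q)$), which is exactly the grouping that produces the telescoping. One should also check that the individual summands make sense, i.e.\ that $m_S$ in \eqref{4.23} exists for each relevant $S$ — this is the standard existence of a polynomial of best $L_q$-approximation from the finite-dimensional space $\P_{k-1}$, valid for $1\leqslant q\leqslant\infty$. With the ordering fixed, the argument above is routine; no genuine analytic difficulty arises beyond uniform continuity of $f$ on $\overline{Q}$.
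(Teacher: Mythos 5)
Your proof is correct and uses the same telescoping-by-dyadic-level decomposition as the paper, but it handles the convergence step by a genuinely different, more elementary argument. The paper invokes a Whitney-type inequality (Theorem 4 of \cite[\S~2]{B-71}) to bound $\bigl\{\sum_{Q'\in\P[D]_n(Q)} E_k(f;Q';L_q)^q\bigr\}^{1/q}$ by $c(k,d)\,\omega_k\bigl(f;|Q|^{1/d}/2^n;L_q(Q)\bigr)$ and then lets $n\to\infty$; you instead use the crude but sufficient bound $E_k(f;Q';L_q)\leqslant |Q'|^{1/q}\,\mathrm{osc}(f;Q')$ (available because constants lie in $\P_{k-1}$) together with uniform continuity of $f$ on $\overline{Q}$. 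Since the lemma already hypothesizes $f\in C(\N[R]^d)$, your route is self-contained and avoids the external Whitney theorem; the paper's estimate via $\omega_k$ is sharper and works for general $f\in L_q$ when $q<\infty$ (continuity is then only needed for $q=\infty$), but that extra generality is not needed here. Two small caveats in your write-up: the opening claim of a \emph{pointwise} bound $|f-m_{Q'}|\leqslant 2\,\mathrm{osc}(f;Q')$ is not justified as stated (the best $L_q$-approximant need not be pointwise close), but your actual computation uses only the correct $L_q$ estimate $\|f-m_{Q'}\|_{L_q(Q')}\leqslant |Q'|^{1/q}\,\mathrm{osc}(f;Q')$, so the argument stands; and the phrase ``nondecreasing cube size'' should read ``nonincreasing'' (or ``by level''), since the partial sums $\bigcup_{j<n}\P[D]_j(Q)$ start from $Q$ itself and refine downward — the parenthetical clarification you give is the correct one.
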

\begin{proof}
Let $\P[D]_j(Q)$, $j\in\N[Z]$, be the partition of $Q$ into $2^{jd}$ congruent 
(dyadic) cubes, e.g., $\P[D]_0(Q)=\{Q\}$ and $\P[D]_1(Q)$ is the set of sons 
for $Q$. Then $P_j\in\P_{k-1}(\P[D](Q))$ is defined by 
\begin{equation}\label{4.28}
 P_j:=\sum\limits_{Q'\in\P[D]_j(Q)}
 m_{Q'}\cdot 1_{Q'}.
\end{equation}
We show that 
\begin{equation}\label{4.29}
  f-m_Q=
 \sum\limits_{j\geqslant0} (P_{j+1}-P_j)
 \text{ (convergence in $L_q(Q)$)}.
\end{equation}
Let $s_n$ be the $n$-th partial sum of the series \eqref{4.28}. Then
\[
 f-m_Q-s_n=f-P_{n-1}=
 \sum\limits_{Q'\in\P[D]_n(Q)} (f-m_{Q'})\cdot 1_{Q'}.
\]
This and \eqref{4.23} imply that 
\[
 \|f-m_Q-s_n\|_q=
 \left\{\sum\limits_{Q'\in\P[D]_n(Q)}
 \|f-m_{Q'}\|_{L_q(Q')}^q 
 \right\}^{1/q}=
 \left\{
 \sum\limits_{Q'\in\P[D]_n(Q)} E_k(f;Q';L_q)\right\}^{1/q}.
\]

By Thm.~4 of \cite[\S~2]{B-71} the right--hand side is bounded by 
$c(k,d)\omega_k\left( f;\frac{|Q|^{1/d}}{2^n}; L_q(Q)\right)$. 
Since this bound tends to $0$ as $n\to \infty$ for 
$q<\infty$ and for $q=\infty$ and $f\in C(\N[R]^d)$, \eqref{4.29} is done.
\end{proof}

Rewriting the second sum here by \eqref{4.27} we have
\[
 f-m_{Q^d}=
 \sum\limits_{B\in\B_N}
 \sum\limits_{Q\in B} M_Q+
 \sum\limits_{Q\in \partial G_N} (f-m_Q)\cdot 1_Q.
\]
Extracting from here the equality \eqref{4.26} for $g_N$ we obtain the required 
presentation
\begin{equation}\label{4.30}
 f-g_N=
 \sum\limits_{B\in\B_N} S_B+
 \sum\limits_{Q\in \partial G_N} (f-m_Q)\cdot1_Q;
\end{equation}
here we set\footnote{$S_B=0$ if $B$ is a singleton}
\begin{equation}\label{4.31}
  S_B=\left(\sum\limits_{Q\in B\setminus
 \{B^-\}} M_Q \right)- 
 [(m_{B^+}-m_{B^0})\cdot 1_{B^+}+
 (m_{B^0}-m_{B^-})\cdot1_{B^-}].
\end{equation}

The next result describes the basic properties of $\{S_B;B\in\B_N\}$.
\begin{lemma}\label{lem-4.7}
The following is true
\begin{equation}\label{4.32}
 S_B= \sum\limits_{Q\in B\setminus \{B^-\}} 
 \sum\limits_{Q'\in \P[D]_1(Q)\setminus B}
 (m_{B^0} - m_{Q'})\cdot 1_{Q'}.
\end{equation}
\end{lemma}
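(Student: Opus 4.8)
The plan is to compute $S_B$ directly from its definition \eqref{4.31} by carefully telescoping the sum $\sum_{Q\in B\setminus\{B^-\}}M_Q$ along the path $B$ and then accounting for the two correction terms. Recall that $B=[B^-,B^+]$ is a path $B^- = Q_1\to Q_2\to\cdots\to Q_n = B^+$, and that by \eqref{4.24} each $M_Q$ splits into a "sons" part $\sum_{Q'\in\P[D]_1(Q)}m_{Q'}\cdot1_{Q'}$ and the "father" part $-m_Q\cdot1_Q$. Along the path, for each $Q=Q_i$ with $i\geqslant 2$, exactly one of its sons, namely $Q_{i-1}$, again lies on $B$, and its contribution $m_{Q_{i-1}}\cdot1_{Q_{i-1}}$ cancels against the father term $-m_{Q_{i-1}}\cdot1_{Q_{i-1}}$ coming from $M_{Q_{i-1}}$ (when $Q_{i-1}\neq B^-$). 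So the plan is: first isolate the pieces that do \emph{not} cancel, namely (i) the off-path sons $\sum_{Q'\in\P[D]_1(Q)\setminus B}m_{Q'}\cdot1_{Q'}$ for each $Q\in B\setminus\{B^-\}$, (ii) the father term $-m_{B^+}\cdot1_{B^+}$ at the head (since $B^+=Q_n$ has no father on the path among the $M_Q$ being summed), and (iii) the uncancelled son term $+m_{B^-}\cdot1_{B^-}$ contributed because $M_{B^-}$ itself is excluded from the sum — wait, more carefully: the son $B^-=Q_1$ of $Q_2$ contributes $+m_{B^-}\cdot1_{B^-}$ from $M_{Q_2}$, and there is no $M_{B^-}$ in the sum to cancel it.

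Having collected those three leftover pieces, the next step is to add in the bracketed correction $-[(m_{B^+}-m_{B^0})\cdot1_{B^+} + (m_{B^0}-m_{B^-})\cdot1_{B^-}]$. The term $-m_{B^+}\cdot1_{B^+}$ from piece (ii) combines with $+m_{B^+}\cdot1_{B^+}$ from the correction, leaving $-m_{B^0}\cdot1_{B^+}$; and the term $+m_{B^-}\cdot1_{B^-}$ from piece (iii) combines with $-m_{B^-}\cdot1_{B^-}$ from the correction, leaving $+m_{B^0}\cdot1_{B^-}$. So far this contributes $m_{B^0}\cdot(1_{B^-}-1_{B^+})$. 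Now observe that $1_{B^-}-1_{B^+} = 1_{B^-\setminus B^+}$ since $B^+\subset B^-$; and $B^-\setminus B^+$ is precisely the disjoint union $\bigcup_{Q\in B\setminus\{B^-\},\ Q'\in\P[D]_1(Q)\setminus B}Q'$ of all the off-path sons appearing in piece (i) — because peeling off the path from $B^-$ down to $B^+$ removes exactly one son at each level and leaves the off-path siblings. Substituting $1_{B^-\setminus B^+}=\sum_{Q\in B\setminus\{B^-\}}\sum_{Q'\in\P[D]_1(Q)\setminus B}1_{Q'}$ and folding the factor $m_{B^0}$ into the double sum, the leftover pieces (i) plus the combined correction become exactly
\[
 \sum_{Q\in B\setminus\{B^-\}}\sum_{Q'\in\P[D]_1(Q)\setminus B}(m_{B^0}-m_{Q'})\cdot1_{Q'},
\]
which is \eqref{4.32}.

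The step I expect to be the main obstacle — or at least the one demanding the most care — is the combinatorial bookkeeping of the telescoping: being precise about which son terms and father terms survive, handling the endpoints $B^-$ and $B^+$ correctly (noting $M_{B^-}$ is deliberately excluded from the sum in \eqref{4.31}), and verifying the geometric identity $B^-\setminus B^+ = \bigsqcup Q'$ for the off-path sons, which is what lets the scalar $m_{B^0}$ be distributed uniformly over all those cubes. The singleton case, where $B^- = B^+$, $B^0=\emptyset$, and $S_B=0$, should be checked separately but is immediate since the path $B\setminus\{B^-\}$ is empty and all three leftover pieces together with the correction vanish. Everything else is routine manipulation of finite sums of indicator functions, so no convergence or analytic subtlety enters here — the lemma is purely algebraic.
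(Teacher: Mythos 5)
Your overall plan---telescope $\sum_{Q\in B\setminus\{B^-\}}M_Q$ directly along the path and then absorb the two endpoint corrections---is sound and is essentially the same idea the paper carries out (the paper does it by induction via \eqref{4.33}--\eqref{4.34}, you do it in one pass, which is arguably cleaner). But the execution contains several compounding sign errors that leave the argument broken, and you would not in fact land on \eqref{4.32}.

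First, the containment is backwards. By the definitions in \eqref{4.8}--\eqref{4.9} and \eqref{4.25}, the tail $B^-=T_B$ is the \emph{smallest} cube of the path and the head $B^+=H_B$ is the \emph{largest}, so $B^-\subset B^+$ and $B^0 = B^+\setminus B^-$. Consequently $1_{B^-}-1_{B^+} = -\,1_{B^0}$, not $1_{B^-\setminus B^+}$; the geometric identity you want is $B^+\setminus B^- = \bigsqcup_{Q\in B\setminus\{B^-\}}\bigsqcup_{Q'\in\mathscr{D}_1(Q)\setminus B}Q'$, not the reverse. Second, your bookkeeping of the bracketed correction in \eqref{4.31} is off. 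You write the correction as $-[(m_{B^+}-m_{B^0})\cdot 1_{B^+}+(m_{B^0}-m_{B^-})\cdot 1_{B^-}]$ but then claim it contributes $+m_{B^+}\cdot 1_{B^+}$ and $-m_{B^-}\cdot 1_{B^-}$; expanding the minus sign actually gives $-m_{B^+}\cdot 1_{B^+}+m_{B^0}\cdot 1_{B^+}$ and $+m_{B^-}\cdot 1_{B^-}-m_{B^0}\cdot 1_{B^-}$. Combined with your pieces (ii) and (iii), the $1_{B^+}$ coefficient is $m_{B^0}-2m_{B^+}$ and the $1_{B^-}$ coefficient is $2m_{B^-}-m_{B^0}$, not the $\mp m_{B^0}$ you assert. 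Third, even granting your (incorrect) intermediate simplification, the final ``folding'' step cannot succeed: your piece (i) carries each $m_{Q'}$ with a \emph{plus} sign, and adding $m_{B^0}\cdot\sum\sum 1_{Q'}$ would give $\sum\sum (m_{Q'}+m_{B^0})\cdot 1_{Q'}$, whereas \eqref{4.32} requires $m_{B^0}-m_{Q'}$. Nothing in your computation ever flips the sign of $m_{Q'}$, so you have simply asserted the target rather than derived it. You should redo the computation with the correct inclusion $B^-\subset B^+$, expand the bracket carefully term by term, and track exactly which $\pm m_{Q_1}\cdot 1_{Q_1}$, $\pm m_{Q_n}\cdot 1_{Q_n}$, and $\pm m_{B^0}\cdot 1_{(\cdot)}$ terms survive; the singleton case (where $B^-=B^+$ and everything vanishes) is indeed immediate and you handled it correctly.
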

\begin{proof}
We begin with the identity
\begin{multline}\label{4.33}
 \sum\limits_{Q\in B\setminus\{B^-\}} M_Q=
\\
=\sum\limits_{Q\in B\setminus\{B^-\}}
 \sum\limits_{Q'\in\P[D]_1(Q)\setminus B}
 [(m_{Q'}- m_{B^+})\cdot 1_{Q'}+
 (m_{B^-}-m_{B^+})\cdot 1_{B^-}]
\end{multline}
proving by induction on $\card B$.

Let $B:=[Q_1,Q_n]=\{ Q_1\to Q_2\to\ldots Q_n\}$, i.e., $B^-:=Q_1$, $B^+:=Q_n$. 
Since $\P[D]_1(Q)\setminus B$ for $Q\in B\setminus\{B^-\}$ consists of all sons 
of 
$Q$ excluding the son belonging to $B$, 
\[
 \P[D]_1(Q_i)\setminus B=
 \P[D]_1(Q_i)\setminus\{Q_{i-1}\},\,\,
 i\geqslant2.
\]
Denoting the right--hand side by $\P[D]^*(Q_i)$ we then rewrite \eqref{4.33} as 
follows.
\begin{equation}\label{4.34}
  \sum\limits_{i=2}^n M_{Q_i}
  =\sum\limits_{i=2}^n
 \sum\limits_{Q\in \P[D]_1^*(Q_i)}
 [(m_Q-m_{Q_n})\cdot 1_Q]+
 (m_{Q_1}-m_{Q_n})\cdot 1_{Q_1}.
\end{equation}
For $n=2$ the right--hand side of \eqref{4.34} equals
\begin{multline*}
 \sum\limits_{Q\in \P[D]^*_1(Q_2)} 
 [(m_Q-m_{Q_2})\cdot 1_Q]+
 (m_{Q_1}-m_{Q_2})\cdot 1_{Q_1}:=\\
 \sum\limits_{Q\in \P[D]_1(Q_2)} m_Q\cdot 1_Q-
 m_{Q_2}\left(\sum\limits_{Q\in D_1^*(Q_2)}1_Q+1_{Q_1}\right).
\end{multline*}
Since $\P[D]_1^*(Q_2)$ is a partition of $Q_2\setminus Q_1$, the sum in the 
brackets equals $1_{Q_2\setminus Q_1}+1_{Q_1}=1_{Q_2}$. 
Hence, the right--hand side here equals $M_{Q_2}$, see \eqref{4.24}, as 
required.

Now let \eqref{4.33} hold for all paths of cardinality $n\geqslant2$. To prove 
it for $n+1$ we write \eqref{4.34} for the $n$-term path $\{Q_2\to\ldots 
Q_{n+1}\}$ and add to it \eqref{4.34} for $n=2$ written equivalently as follows.
\begin{equation*}
  m_{Q_2}=
  \sum\limits_{Q\in \P[D]_1^*(Q_2)}
  (m_Q-m_{Q_{n+1}})\cdot 1_Q+
  (m_{Q_{n+1}}-m_{Q_2})\cdot 1_{Q_2}+
  (m_{Q_1}-m_{Q_{n+1}})\cdot 1_{Q_1}.
\end{equation*}
Together with the equality
\begin{equation}\label{4.36}
  \sum\limits_{i=3}^{n+1} M_{Q_i}=
  \sum\limits_{i=3}^{n+1}
  \sum\limits_{Q\in \P[D]_1^*(Q_i)} (m_Q-m_{Q_{n+1}})\cdot 1_Q
  +(m_{Q_2}-m_{Q_{n+1}})\cdot 1_{Q_2}
\end{equation}
this gives
\begin{equation}\label{4.37}
  \sum\limits_{i=2}^{n+1} M_{Q_i}=
  \sum\limits_{i=2}^{n+1} 
  \sum\limits_{Q\in \P[D]_1^*(Q_i)}
  (m_Q-m_{Q_{n+1}})\cdot 1_Q+R
\end{equation}
where we set
\[
  R:=(m_{Q_{n+1}}- m_{Q_2})\cdot 1_{Q_2}+
  (m_{Q_1}-m_{Q_{n+1}})\cdot 1_{Q_1}
  +
  (m_{Q_2}-m_{Q_{n+1}})\cdot 1_{Q_2}=
\]
\[
 =
  (m_{Q_1}-m_{Q_{n+1}})\cdot 1_{Q_1}.
\]
Hence, \eqref{4.37} proves the required equality \eqref{4.33} for $n+1$.

Now we transform \eqref{4.34} by adding and extracting 
$m_{B^0}(:=m_{Q_n\setminus Q_1})$. This gives
\[
 \sum\limits_{i=2}^n M_{Q_i}=
 \sum\limits_{i=2} ^n \sum\limits_{Q\in \P[D]_1^*(Q_i)}
 (m_Q-m_{B^0})\cdot 1_Q+
 (m_{B^0}-m_{Q_n}) 
 \sum\limits_{i=2} ^n 
 \sum\limits_{Q\in \P[D]_1^*(Q_i)} 1_Q+
\]
\[
 +
 (m_{Q_1}-m_{B^0})\cdot 1_{Q_1}+
 (m_{B^0}-m_{Q_n})\cdot 1_{Q_1}.
\]
Since the second sum here equals $\displaystyle 
\sum\limits_{i=2}^n 
1_{Q_i\setminus Q_{i-1}}= 1_{Q_n\setminus Q_1}$ and, in the chosen notations, 
see \eqref{4.30}, 
\begin{equation}\label{4.36-a}
 S_B:=\sum\limits_{i=2}^n M_{Q_i}-
 (m_{Q_n}-m_{B^0})\cdot 1_{Q_n}-
 (m_{B^0}-m_{Q_1})\cdot 1_{Q_1},
\end{equation}
these two equalities give 
\[
  S_B=\sum\limits_{i=2} ^n
  \left[ \sum\limits_{Q\in \P[D]_1^*(Q_i)} (m_Q-m_{B^0}) \cdot 1_Q\right]+R
\]
where the remainder $R$ equals
\begin{multline}\label{4.37-b}
  R:= [(m_{B^0}-m_{Q_n})\cdot 1_{Q_n\setminus Q_1} +
  (m_{B^0}-m_{Q_n})\cdot 1_{Q_1}\\
 +
  (m_{Q_1}-m_{B^0})\cdot 1_{Q_1}]-[(m_{Q_n}-m_{B^0})\cdot 1_{Q_n}+
 (m_{B^0}-m_{Q_1})\cdot 1_{Q_1}].
\end{multline}
Since the brackets here annihilate, $R=0$.  

The identity \eqref{4.33} is done.\renewcommand{\qedsymbol}{}
\end{proof}

\subSect{} \textit{Proof Proposition~\ref{Pro-4.5}}. We should prove
that for $f\in V_{pq}^k\bigcap C^\infty (\N[R]^d)$
\begin{equation}\label{4.38}
  \|f-g_N\|_q\leqslant
  c(k,d) N^{-s/d}.
\end{equation}

Due to the presentation \eqref{4.28}
\begin{equation}\label{4.39}
  \|f-g_N\|_q\leqslant
  \left\| \sum\limits_{B\in\B_N} S_B\right\|_q+
  \left\|\sum\limits_{Q\in \partial G_N} (f-m_Q)\cdot 1_Q\right\|_q
\end{equation}
and remains to estimate each sum.

To complete the proof of Lemma~\ref{lem-4.7} it remains to replace 
$\displaystyle \sum\limits_{Q\in B\setminus \{B^-\}} M_Q$ in \eqref{4.31} by 
the right--hand side of the identity \eqref{4.33}. Since the terms outside the 
double sum annihilate we obtain the required presentation of $S_B$, see 
\eqref{4.32}. \qed

\begin{lemma}\label{Lem-4.8}
(a) Supports of the functions $S_B$, $B\in\B_N$, are disjoint.

(b) It is true that
\begin{equation}\label{4.40}
  \|S_B\|_q\leqslant
  e(k,d)\var{}_p^k(f;B^+\setminus B^-; L_q).
\end{equation}
\end{lemma}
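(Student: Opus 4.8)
The target statement, Lemma~\ref{Lem-4.8}, has two parts. Part (a) is essentially combinatorial/geometric: I would read off from the explicit formula \eqref{4.32} that
\[
 \mathrm{supp}\,S_B\subseteq\bigcup_{Q\in B\setminus\{B^-\}}\ \bigcup_{Q'\in\P[D]_1(Q)\setminus B} Q'.
\]
Call this set $R_B$. The key observation is that as $Q$ ranges over the path $B=[B^-,B^+]$ (excluding the tail) and $Q'$ ranges over the sons of $Q$ not lying on $B$, the cubes $Q'$ are pairwise disjoint and together they tile exactly the $d$-ring $B^+\setminus B^-$ — this is the same tiling identity already used in the proof of Lemma~\ref{lem-4.7} (the sum of the relevant indicators telescopes to $1_{Q_n\setminus Q_1}$). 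Hence $R_B\subseteq B^+\setminus B^-$. Now, since $\B_N$ is a \emph{partition} of $G_N\setminus\{Q^d\}$ into basic paths, distinct paths $B,\widetilde B$ are vertex-disjoint in the tree, and one checks that the corresponding $d$-rings $B^+\setminus B^-$, $\widetilde B^+\setminus\widetilde B^-$ have disjoint interiors (two such rings built from vertex-disjoint paths cannot overlap in positive measure — either the heads are incomparable, giving disjoint cubes, or one path lies strictly inside a ``hole'' removed by the other). Therefore the supports of $S_B$ are disjoint, which is (a).

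**Part (b).** Here the plan is to bound $\|S_B\|_q$ using the representation \eqref{4.32} together with the fact that each summand $(m_{B^0}-m_{Q'})\cdot 1_{Q'}$ compares two near-best polynomials on nested sets. The standard device: for $Q'\subset B^+\setminus B^-=:R$ a dyadic cube appearing in \eqref{4.32}, I estimate
\[
 \|m_{B^0}-m_{Q'}\|_{L_q(Q')}\leqslant \|f-m_{B^0}\|_{L_q(Q')}+\|f-m_{Q'}\|_{L_q(Q')}\leqslant \|f-m_{B^0}\|_{L_q(Q')}+E_k(f;Q';L_q),
\]
and for the first term use that $m_{B^0}=m_{R}$ is the best polynomial on the whole ring $R=B^0$, so $\|f-m_{B^0}\|_{L_q(Q')}\leqslant\|f-m_{B^0}\|_{L_q(R)}=E_k(f;R;L_q)$. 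Summing $q$-th powers over the disjoint family $\{Q'\}$ (which tiles $R$) gives
\[
 \|S_B\|_q^q=\sum_{Q'}\|m_{B^0}-m_{Q'}\|_{L_q(Q')}^q\leqslant 2^{q-1}\Bigl(\sum_{Q'} E_k(f;R;L_q)^q\cdot\tfrac{|Q'|}{|R|}\cdot(\text{?})+\sum_{Q'}E_k(f;Q';L_q)^q\Bigr).
\]
The second sum is controlled by $\var_p^k(f;R;L_q)^p$ — but one must pass from the $\ell^q$ sum to the $\ell^p$ sum defining $(k,p)$-variation; since $p<q$, we have $\|\cdot\|_{\ell^q}\leqslant\|\cdot\|_{\ell^p}$, so $\sum_{Q'}E_k(f;Q';L_q)^q\leqslant\bigl(\sum_{Q'}E_k(f;Q';L_q)^p\bigr)^{q/p}\leqslant\var_p^k(f;R;L_q)^q$ provided the individual terms are $\leqslant 1$, which we may assume by the normalization \eqref{4.2} (or handle directly). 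For the first sum — the contribution of $E_k(f;R;L_q)$ repeated over all $Q'$ tiling $R$ — I use that $\sum_{Q'}|Q'|=|R|$, so it collapses to $E_k(f;R;L_q)^q\leqslant\var_p^k(f;R;L_q)^q$ as well (here $E_k(f;R;L_q)\leqslant\var_p^k(f;R;L_q)$ trivially from Definition~\ref{Deff-3.1}$^a$ applied to the one-cube... actually $R$ is a $d$-ring not a cube, so instead invoke Theorem~\ref{Teo-3.7}: $E_k(f;B^+\setminus B^-;L_q)\leqslant c(k,d)\var_p^k(f;B^+\setminus B^-;L_q)$). Combining, $\|S_B\|_q\leqslant e(k,d)\var_p^k(f;B^+\setminus B^-;L_q)$.

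**Main obstacle.** The delicate point is the passage between the $\ell^q$-type sum that arises naturally from $\|\cdot\|_q^q=\sum\|\cdot\|_{L_q(Q')}^q$ and the $\ell^p$-type quantity that \emph{defines} $(k,p)$-variation, given that $p<q$. One cannot simply bound $\ell^q\leqslant\ell^p$ without knowing the terms are $\leqslant 1$; but under the normalization $|f|_{V_{pq}^k}=1$ each $E_k(f;Q';L_q)\leqslant 1$, so this is legitimate, and similarly $E_k(f;R;L_q)\leqslant c(k,d)$ by Theorem~\ref{Teo-3.7}. A secondary subtlety is justifying that the first-term contribution (the ``$m_{B^0}$ on small pieces'' part) telescopes cleanly — this needs the tiling identity $\sum_{Q'}1_{Q'}=1_{B^+\setminus B^-}$, which was already established inside the proof of Lemma~\ref{lem-4.7}, so I would simply cite that. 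Everything else is routine: triangle inequality, nestedness of best-approximation polynomials, and monotonicity of $E_k$ in the domain.
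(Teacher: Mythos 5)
Your overall framework is right --- use \eqref{4.32}, the tiling of the ring $B^+\setminus B^-$ by the cubes $Q'\in\P[D]_1^*(Q)$, $Q\in B\setminus\{B^-\}$, Theorem~\ref{Teo-3.7} for the ring, and monotonicity of $\ell^p$-norms --- and part (a) matches the paper's argument (the paper's version is tighter: since $\B_N$ partitions a rooted tree, if two heads intersect then one, say $\widetilde B^+$, is contained in the other, and being a distinct basic path it must actually lie inside the tail $B^-$, so $\mathrm{supp}\,S_{\widetilde B}\subset B^-$ misses $B^+\setminus B^-$).

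For part (b), however, you leave a genuine gap exactly where you put the ``$(\text{?})$''. Your plan is to bound $\|m_{B^0}-m_{Q'}\|_{L_q(Q')}$ on each small cube $Q'$ separately via the triangle inequality, then bound $\|f-m_{B^0}\|_{L_q(Q')}$ by the full $E_k(f;R;L_q)$, and finally sum $q$-th powers. But then the first-term contribution becomes $\sum_{Q'}E_k(f;R;L_q)^q = (\card\{Q'\})\cdot E_k(f;R;L_q)^q$, and the number of cubes $Q'$ (which is $(2^d-1)\cdot(\card B - 1)$) is not bounded, since a basic path can be arbitrarily long. That route does not close, which is why you could not fill in the missing factor. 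The fix is to apply the triangle inequality globally rather than cube-by-cube: from \eqref{4.32} write $S_B=(m_{B^0}-f)\cdot 1_{B^+\setminus B^-}+\sum_{Q'}(f-m_{Q'})\cdot 1_{Q'}$ (using precisely the tiling identity $\sum_{Q'}1_{Q'}=1_{B^+\setminus B^-}$ that you already noticed), then take $L_q$-norms of the two pieces separately. The first piece is exactly $E_k(f;B^+\setminus B^-;L_q)$, handled by Theorem~\ref{Teo-3.7}, and the second is the $\ell^q$-sum of the local $E_k$'s, handled by Jensen (monotonicity of $\ell^p$-norms) and the definition of $(k,p)$-variation. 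This is the paper's argument.

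One further inaccuracy: you claim that the passage $\bigl(\sum_{Q'}E_k(f;Q';L_q)^q\bigr)^{1/q}\leqslant\bigl(\sum_{Q'}E_k(f;Q';L_q)^p\bigr)^{1/p}$ requires the individual terms to be at most $1$, invoking the normalization \eqref{4.2}. This is not needed: $\|\cdot\|_{\ell^q}\leqslant\|\cdot\|_{\ell^p}$ for $p<q$ holds unconditionally for nonnegative sequences (it is homogeneous, so one may always rescale). The normalization \eqref{4.2} plays no role in Lemma~\ref{Lem-4.8} --- indeed the statement \eqref{4.40} is scale-invariant in $f$ and would be false if it depended on a normalization.
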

\begin{proof}
(a) Since $supp\,S_B=B^+\setminus B^-$, see Lemma~\ref{Lem-4.8}, the 
supports of 
$S_B$ and $S_{\widetilde B}$ no intersect if their heads do not. Otherwise, one 
of these (dyadic) cubes, say, $\widetilde B^+$, embeds into the other. Then 
$\widetilde B^+$ embeds into the tail $B^-$ of the path $B$. Hence, 
$supp\,S_{\widetilde B}$ does not intersect $supp\,S_B=B^+\setminus B^-$.

(b) By the identity \eqref{4.32}
\[
 S_B=
 \sum\limits_{Q\in B^*}
 \sum\limits_{Q'\in \P[D]_1^*(Q)}
 (m_{B^0}-f+f-m_{Q'})\cdot 1_{Q'}
\]
where we for brevity set $B^*:=B\setminus\{B^-\}$.

Further, we have
\[
 S_B=(m_{B_0}-f) \sum\limits_{Q\in B^*}
 \sum\limits_{Q'\in \P[D]_1^*(Q)}1_Q+
 \sum\limits_{Q\in B^*} 
 \sum\limits_{Q'\in \P[D]_1^*(Q)} (f-m_{Q'})\cdot 1_{Q'}.
\]
Since the family $\displaystyle \bigcup\limits_{Q\in B^*} \P[D]_1^*(Q)$ is a 
partition of $B^+\setminus B^-$, the sum of indicators here equals 
$1_{B^+\setminus B^-}$ and the equality implies
\[
 \|S_B\|_q\leqslant
 \|f-m_{B^0}\|_{L_q(B^+\setminus B^-)}+
 \left(\sum\limits_{Q\in B^*}
 \sum\limits_{Q'\in \P[D]_1^*(Q)}
 \|f-m_{Q'}\|^q\right)^{1/q}=
\]
\[
 =E_k(f;B^+\setminus B^-;L_q)+
 \left(\sum\limits_{Q\in B^*}
 \sum\limits_{Q'\in \P[D]_1^*(Q)}
 E_k(f;Q';L_q)^q\right)^{1/q}.
\]

By the Jenssen inequality the second term is bounded by 
\[
 \left\{
 \sum\limits_{Q\in B^*}
 \sum\limits_{Q'\in \P[D]_1^*(Q)}
 E_k(f;Q';L_q)^p\right\}^{1/p}.
\]
Since the family $\displaystyle \bigcup\limits_{Q\in B^*} \P[D]_1^*(Q)$ is a 
partition of $B^+\setminus B^-$, this sum is bounded by 
$\var{}_p^k(f;B^+\setminus B^-;L_q)$, see the definition of $(k,p)$--variation 
in (\ref{3.1}$^a$).

Moreover, by Theorem~\ref{Teo-3.7}
\[
 \|f-m_{B^0}\|_{L_q(B^0)}:=
 E_k(f;B^+\setminus B^-)\leqslant c(k,d)\cdot 
 \var{}_p^k(f;B^+\setminus B^-;L_q).
\]

Combining this with the previous inequality we obtain \eqref{4.40}. 
\qed

Now we use Lemma~\ref{Lem-4.8} to estimate the first term in \eqref{4.39}. We 
have 
\[
 \left\|\sum\limits_{B\in\B_N} S_B\right\|_q
 \leqslant\left\{\sum\limits_{B\in\B_N}\|S_B\|_q^q\right\}^{1/q}
 \leqslant
\]
\[
 \leqslant
 c(k,d)\left\{\sum\limits_{B\in\B_N} \var{}_p^k (f;B^+\setminus B^-; L_q)^q
 \right\}^{1/q}.
\]

Moreover, by the definition of the weight $W$, see \eqref{4.7}, and the 
inequality \eqref{4.11} of Proposition~\ref{Pro-4.3}
\[
 \var{}_p^k(f;B^+\setminus B^-;L_q):=
 W(B^+\setminus B^-)^{1/p}\leqslant N^{-1/p}. 
\]
Combining with the previous inequality and using \eqref{4.12} we finally have 
the required estimate
\[
 \left\|\sum\limits_{B\in\B_N} S_B\right\|_q
 \leqslant c(k,d) \left(N^{-q/p}
 \card \B_N\right)^{1/q}
 \leqslant
 c(k,d) \left(N^{-q/p}(3N+1)\right)^{1/q}
 \leqslant
\]
\[
 \leqslant
 c_1(k,d)N^{-1/p+1/q}:=c_1(k,d)N^{-s/d}.
\]

It remains to obtain the similar bound for the sum over boundary $\partial G_N$ 
in \eqref{4.39}.

Due to Proposition~\ref{Pro-4.4} and \eqref{4.14} $\partial G_N$ is disjoint 
and, moreover,
\[
  \var{}_p^k(f;Q;L_q)^p=: W(Q)<N^{-1}
\]
for every $Q\in\partial G_N$.

This immediately implies
\[
 \left\|\sum\limits_{Q\in \partial G_N}
 (f-m_Q)\cdot 1_Q\right\|_q=
 \left\{ \sum\|f-m_Q\|_{L_q(Q)}^q
 \right\}^{1/q}:=
 \left\{\sum\limits_{Q\in \partial G_N}
 E_k(f;Q;L_q)^q\right\}^{1/q}\leqslant
\]
\[
 \leqslant\left\{ \sum\limits_{Q\in \partial G_N}
 \var{}_p^k (f;Q;L_q)^{q/p}\right\}^{1/p}
 \leqslant N^{-1/p} \left(\card G_N\right)^{1/q}.
\]

Since $\card G_N\leqslant 2^d N$, see \eqref{4.16}, this finally gives
\[
 \left\|\sum\limits_{Q\in \partial G_N} 
 (f-m_Q)\cdot 1_Q \right\|_q\leqslant
 2^{d/q} N^{-s/d}
\]
as required.

Proposition~\ref{Pro-4.5} is done.
\end{proof}

\subSect{} \textit{Proof of Theorem~\ref{Teo-2.1}(a)}. We derive the result 
from Theorem~\ref{Teo-3.3} and Proposition~\ref{Pro-4.5}.

Let $Q:=[1-\delta,\delta)$, $\delta>0$, and $f\in V_{pq}^k$ if $q<\infty$ and 
$f\in V_{pq}^k \bigcap C(\N[R]^d)$ if $q=\infty$.

Given $\varepsilon>0$ Theorem~\ref{Teo-3.3} then yields a functions 
$f_\varepsilon\in C^\infty(\N[R])$ such that 
\begin{equation}\label{4.42}
  \|f-f_\varepsilon\|_{L_q(Q)}\leqslant\varepsilon
\end{equation}
and, moreover,
\begin{equation}\label{4.43}
 \var{}_p^k(f_\varepsilon;Q;L_q)\leqslant|f|_{V_{pq}^k}.
\end{equation}

Since Proposition~\ref{Pro-4.5} is homothety--invariant, it remains true for 
$Q$ substituted for $Q^d$. Hence, given $N\in\N$ there exist a covering 
$\widetilde \Delta_N\subset \P[D](Q)$ of $Q$ and a piecewise polynomial 
$\widetilde 
g_N\in\P_{k-1}(\widetilde\Delta_N)$ such that 
\begin{equation}\label{4.44}
  \|f_\varepsilon-\widetilde g_N\|_{L_q(Q)}
  \leqslant c(k,d) N^{-s/d}
  \var{}_p^k(f;Q;L_q)
\end{equation}
and, moreover,
\begin{equation}\label{4.45}
  \card \widetilde\Delta_N\leqslant c(d)N.
\end{equation}

Now let $h$ be a homothety mapping $Q$ onto $Q^d$, i.e.,
\[
 h(x):=\frac{x-\delta e}{1-2\delta},
 \quad
 x\in \N[R]^d,
\]
where $e:=(1,\ldots,1)$.

Then $\Delta_N:=h(\widetilde\Delta_N)\subset \P[D](:=\P[D](Q^d))$ is a family 
covering of $Q^d$ and satisfying
\begin{equation}\label{4.46}
  \card \Delta_N=\card \widetilde \Delta_N\leqslant c(d)N;
\end{equation}
moreover, $g_N:=\widetilde g_N\circ h^{-1}$ is a piecewise polynomial from 
$\P_{k-1}(\Delta_N)$.

We will show that for $f\in V_{pq}^k$ with $q<\infty$ and for $f\in 
V_{p\infty}^k \bigcap C(\N[R]^d)$
\begin{equation}\label{4.47}
  \|f-g_N\|_q\leqslant c(k,d) N^{-s/d} |f|_{V_{pq}^k};
\end{equation}
this clearly implies Theorem~\ref{Teoo-1.2}(a) for $N\geqslant c(d)$, see 
\eqref{4.46}.

Let $h^*g:=g\circ h^{-1}$, $g\in L_q(Q^d)$. Then $h^*: L_q(Q^d)\to L_q(Q)$ and 
$\|h^*\|=(1-2\delta)^{d/q}$.

Further, we write
\[
 \|f-g_N\|_q\leqslant
 \|(f\circ h-f_\varepsilon)\circ h^{-1}\|_q+
 \|(f_\varepsilon-\widetilde g_N)\circ h^{-1}\|_q\leqslant
\]
\[
 \leqslant(1-2\delta)^{d/q}
 \left(\|f-f_\varepsilon\|_{L_q(Q)}
 +\|f-f\circ h\|_{L_q(Q)}
 +\|f_\varepsilon-\widetilde g_N\|_{L_q(Q)}\right).
\]
By \eqref{4.44} and \eqref{4.43} the third term in the brackets is bounded by 
$c(k,d)N^{-s/d}|f|_{V_{pq}^k}$ while the first tends to $0$ as 
$\varepsilon\to0$, see \eqref{4.42}, and the second as $\delta\to0$ for 
$q<\infty$, and also for $q=\infty$, if $f$ is uniformly continuous on $Q$.

This proves \eqref{4.47} and Theorem~\ref{Teoo-1.2}(a) for $N\geqslant c(d)$.

To obtain the result for $1\leqslant N<c(d)$ we simply set $\Delta_N:=\{Q^d\}$ 
and $g_N:=m_{Q^d}$. Then 
\[
 \|f-g_N\|_q=E_k(f;Q^d;L_q)<
 c(d)^{s/d} N^{-s/d}|f|_{V_{pq}^k}
\]
and, moreover, $\card \Delta_N=1\leqslant N$.

This gives Theorem~\ref{Teo-2.1}(a) for all $N\geqslant1$.\hfill \qed

\subSect{} \textit{Proof of Theorem~\ref{Teo-2.1}(b)}. We establish the analog 
of Theorem~\ref{Teo-2.1}(a) with a partition of $Q^d$ into at most $c(d)N$ 
$d$-rings.

To this aim we write the piecewise polynomial $g_N$ of 
Theorem~\ref{Teo-2.1}(a), see \eqref{4.26}, in the form
\begin{equation}\label{4.48}
  g_N:=m_{Q^d}+\sum\limits_{Q\in\Delta_N} P_Q\cdot 1_Q
\end{equation}
where $P_Q\in\P_{k-1}$ and 
\[
 \Delta_N:=\bigcup\limits_{B\in\B_N}
 (\{ H_B,T_B\} \bigcup \P[D]_1(B^-)),
\]
see \eqref{4.20}.

We first assume that $\Delta_N$ \textit{covers} $Q^d$. If $\Delta_N$ is no
partition (otherwise, result is clear), it contains at least one \textit{tower}
\[
 T:=\{Q_1\subsetneqq\ldots \subsetneqq Q_n\}\subset \Delta_N.
\]
This means that for every $0\leqslant i\leqslant n$ there in no $Q\in \Delta_N$ 
such that $Q_i\subsetneqq Q\subsetneqq Q_{i+1}$; here $Q_0:=\emptyset$, 
$Q_{n+1}:=Q^d$; hence, the \textit{bottom} $Q_1\not=\emptyset$ and the 
\textit{top} $Q_n$ are, respectively, minimal and maximal cubes of $T$ closest 
to $Q^d$.

According to this definition $(\Delta_N\setminus T)\bigcup\{Q_n\}$ still covers 
$Q^d$. Moreover, the tops of different towers are no intersect.

These, in particular, imply that if $T_j$, $1\leqslant j\leqslant m$, are all 
towers of $\Delta_N$ and $Q(T_j)$ are their tops, then
\[
 \left( \Delta_N\setminus \bigcup\limits_{j=1}^m
 T_j\right)
 \bigcup
 \left(\bigcup\limits_{j=1}^m
 Q(T_j)\right)
\]
is a partition of $Q^d$.

Hence, it suffices to subdivide each $Q(T_j)$ into a set of $d$-rings 
of cardinality card $T_j$. We do this for $m=1$ and then repeat 
the procedure for the remaining towers.

Now let $T:=\{Q_1\subsetneqq\ldots\subsetneqq Q_n\}$ be the single tower of 
$\Delta_N$. Setting
\[
 R_i:=Q_i\setminus Q_{i-1},
 \quad 1\leqslant i\leqslant n,
\]
where $R_1=Q_1$ as $Q_0:=\emptyset$, we obtain the partition 
$\P[R]_n:=\{R_i\}_{1\leqslant i\leqslant n}$ of $Q_n:=Q(T)$ into $d$-rings.

Further, we define the family of polynomials $\{P_{R_i}\}\subset \P_{k-1}$ 
given by $\displaystyle P_{R_i}:=\left(\sum\limits_{j=i}^n P_{Q_j}\right) 
\cdot1_{R_j}$, $1\leqslant i\leqslant n$. These definitions imply the identity
\begin{equation}\label{4.49}
  \sum\limits_{i=1}^n P_{Q_i}\cdot 1_{Q_i}=
  \sum\limits_{R\in\P[R]_n}P_R\cdot 1_R.
\end{equation}
Moreover, the $T$ is single in $\Delta_N$, hence, 
$\P[R]_n\bigcup(\Delta_N\setminus T)$ is a partition of $Q^d$ into $\leqslant 
n+(N-n)=N$ $d$-rings while the piecewise polynomial 
\[
 \widetilde g_N:=
 m_{Q^d}+
 \sum\limits_{R\in \P[R]_n} P_R\cdot 1_R+
\sum\limits_{Q\in \Delta_N\setminus T} P_Q\cdot 1_Q
\]
belongs to $\P_{k-1}(\P[R]_n\bigcup(\Delta_N\setminus T))$ and equals $g_N$ by 
\eqref{4.49} and \eqref{4.48}.

This gives the result for $\Delta_N$ being a covering of $Q^d$.

Now let $\Delta_N$ do not a covering of $Q^d$. Then $\P[D]_1(Q^d)\bigcap 
G_N^c\not=\emptyset$, since otherwise $\P[D]_1(Q^d)\subset G_N$, i.e., every 
son of $Q^d$ is the head of a basic path. By the definition of $\Delta_N$, see 
\eqref{4.20}, this implies that $\P[D]_1(Q)\subset \Delta_N$, i.e., $\Delta_N$ 
is a 
covering of $Q^d$, a contradiction.

Further, the set of heads $\P[D]_1(Q^d)\bigcap G_N$ contains in 
$\Delta_N\subset 
\P[D]\setminus\{Q^d\}$, and, moreover, nonempty as otherwise $\Delta_N=\{Q^d\}$.

Hence, the set
\[
 \widetilde\Delta_N:=\Delta_N\bigcup(\P[D]_1(Q^d)\bigcap G_N^c)
\]
is a covering of $Q^d$ and its cardinality is bounded by
\[
 N+\card \P[D]_1(Q^d)-1=N+2^d-1\leqslant 2^dN.
\]

To complete the proof it suffices to modify the $g_N$ to obtain $\widetilde 
g_N\in \P_{k-1}(\Delta_N\bigcup (\P[D]_1(Q^d)\bigcap G_N^c))$ such that 
\begin{equation}\label{4.50}
  \|f-\widetilde g_N\|_q\leqslant
  c(k,d) N^{-s/d} |f|_{V_{pq}^k}.
\end{equation}

We define $\widetilde g_N$ by 
\[
 \widetilde g_N:= g_N+
 \sum\limits_{Q\in \P[D]_1(Q^d)\bigcap G_N^c}
 (m_Q-m_{Q^d})\cdot 1_Q
\]
and then prove \eqref{4.50}. 

Substituting here the right--hand side of \eqref{4.48} for $g_N$ and using the 
notations
\[
  S:=\bigcup\limits_{Q\in \Delta_N} Q,
  \quad
  \widetilde\Delta:=\P[D]_1(Q^d)\bigcap G_N^c
\]
we have 
\[
 \widetilde g_N=
 g_N\cdot 1_S+
 \left( \sum\limits_{Q\in \widetilde \Delta}
  m_Q\cdot 1_Q\right)\cdot 1_{Q^d\setminus S}.
\]
This, in turn, implies
\begin{equation}\label{4.51}
  \|f-\widetilde g_N\|_q\leqslant
  \|f-g_N\|_q+
  \left\|\sum\limits_{Q\in \widetilde \Delta}
  (f-m_Q)\cdot 1_Q\right\|_q.
\end{equation}
The first summand is clearly bounded by the right--hand side of \eqref{4.50}.

Moreover, the second equals
\[
 \left(\sum\limits_{Q\in\widetilde\Delta }
 E_k(f;Q;L_q)^q\right)^{1/p}\leqslant
 \left\{
 \sum\limits_{Q\in \widetilde \Delta} E_k(f;Q;L_q)^p
 \right\}^{1/p}
 \leqslant
\]
\[
 \leqslant
 \left\{\sum\limits_{Q\in \widetilde \Delta}
 \var{}_p^k (f;Q;L_q)^p\right\}^{1/p}=:
 \left\{\sum\limits_{Q\in \widetilde \Delta}
 W(Q)\right\}^{1/p}.
\]
Since $\widetilde \Delta\subset G_N^c$, every $W(Q)< N^{-1}$.

Hence, the second summand in \eqref{4.51} is bounded by $(2^dN^{-1})^{1/p}$ 
that clearly also majorates by the right--hand side of \eqref{4.50}.

This proves \eqref{4.50} and Theorem~\ref{Teo-2.1}(b).
\hfill \qed

\begin{Remark}\label{Rema-4.9}\em
Let $\Sigma_{k,N}$ be a (nonconvex) set of piecewise polynomials given by
\begin{equation}\label{4.52}
  \Sigma_{k,N}:=
  \bigcup\limits_{\Delta\in\P[D]\setminus\{Q^d\} }
  \P_{k-1}(\Delta)
\end{equation}
where $\Delta$  runs over all coverings of $Q^d$ of cardinality at most $N$.

\newcommand{\R}{\P[R]}

Replacing here $\Delta$'s by partitions of $Q^d$ into at most $N$ 
$d$-rings to 
define the similar space $\R_{k,N}$.

Further, by $\sigma_{k,N}(f;L_q)$ and $\rho_{k,N}(f;L_q)$ we denote best 
approximation of $f$ in $L_q$ by $\Sigma_{k,N}$ and $\R_{k,N}$, respectively.

Then the first part of the proof for Theorem~\ref{Teo-2.1}(b) (with 
$\Delta_N$ being a covering) gives the following inequality
\[
 \rho_{k,N} (f;L_q)
 \leqslant
 \sigma_{k,N}(f;L_q).
\]
\end{Remark}

Since the used in the proof procedure is reversible, the converse inequality 
is also true. Hence,
\begin{equation}\label{4.53}
  \sigma_{k,N} (f;L_q)=\rho_{k,N}(f;L_q),\quad
  N\geqslant 2^d,
\end{equation}
as any covering $\Delta$ in \eqref{4.52} contains at 
least $2^d(=\card \P[D]_1(Q^d))$ cubes.

\section{Proofs of Corollaries}

\subSect{} \textit{Proof of Theorem~\ref{Teoo-1.2}}. We obtain this result from 
Theorem~\ref{Teo-2.1} with $s(V_{pq}^k)=k$ and $q<\infty$. The latter asserts 
that under the assumptions
\begin{equation}\label{5.1}
  d\geqslant2,
  \quad
  1\leqslant p<q<\infty
  \text{ and }
  \frac kd=\frac1p-\frac1q
\end{equation}
there exist a covering $\Delta_N\subset \P[D]$ of $Q^d$ by at most $N$ cubes 
and 
$g_N\in\P_{k-1}(\Delta_N)$ such that
\begin{equation}\label{5.2}
 \|f-g_N\|_q\leqslant
 c(k,d)N^{-k/d}|f|_{V_{pq}^k}.
\end{equation}
It remains to replace here $|f|_{V_{pq}^k}$ by the Sobolev seminorm 
$|f|_{W_p^k(Q^d)}$ if $p>1$ and by the $BV^k(Q^d)$ seminorm if $p=1$. 
This substitution is justified by the two--sided inequality
\begin{equation}\label{5.3}
 |f|_{V_{pq}^k}\approx
 \left\{
 \begin{array}{lcl}
  |f|_{W_p^k} & \text{if} & p>1,\\
  |f|_{BV^k}  & \text{if} & p=1, 
 \end{array}
 \right.
\end{equation}
where constants are independent of $f$, see Theorems~4 and 12 from 
\cite[\S~4]{B-71}.

The result is done.\, \qed

\subSect{} \textit{Proof of Theorem~\ref{Teo-2.3}}. We should prove the analog 
of the previous result for the homogeneous Besov $\dot B_p^\lambda(Q)$, 
$\lambda>0$, $Q\subset \N[R]^d$, whose associated seminorm is given by 
\begin{equation}\label{5.4}
  |f|_{B_p^\lambda(Q)}:=
  \left\{\int\limits_0^{|Q|^{1/d}}
  \left( \frac{\omega_k(f;t;L_p(Q))}{t^\lambda}\right)^\lambda
  \frac{dt}{t}\right\}^{1/p}
\end{equation}
where $k=k(\lambda):=\min\{n\in\N;n>\lambda\}$.

We derive this from Theorem~\ref{Teo-2.1} with $s(V_{pq}^k)=\lambda$, 
$k=k(\lambda)$ and $q<\infty$. Hence, in this case,
\begin{equation}\label{5.5}
  1\leqslant p<q<\infty,
  \quad
  \frac\lambda d=\frac1p-\frac1q,
\end{equation}
and Theorem~\ref{Teo-2.1} gives under these assumptions the inequality
\begin{equation}\label{5.6}
  \|f-g_N\|_q\leqslant
  c(k,d)N^{-\lambda/d} |f|_{V_{pq}^k}
\end{equation}
with the corresponding $g_N\in \P_{k(\lambda)-1}(\Delta_N)$ and $\Delta_N$.

It remains to replace here $|f|_{V_{pq}^k}$ by $|f|_{B_p^\lambda (Q^d)}$.

To this end we use the classical embedding theorem that under the assumptions 
\eqref{5.5} gives the inequality 
\begin{equation}\label{5.7}
  E_k(f;Q;L_q)\leqslant
  c(d,\lambda, q)|f|_{B_p^\lambda(Q)},
\end{equation}
see Remark~\ref{Remark-5.1} below for details.

Now let $\Delta: =\{Q\}$ be a disjoint family of cubes from $Q^d$. Then 
\eqref{5.7} implies
\[
  \left(\sum\limits_{Q\in\Delta}
  E_k(f;Q;L_q)^p\right)^{1/p}\leqslant
  c(d,\lambda,q)
 \left(\sum\limits_{Q\in\Delta}
  (|f|_{B_p^\lambda(Q)})^p\right)^{1/p}.  
\]
Due to Lemma 2 from \cite[\S~5]{B-94} the sum in the right--hand side is 
bounded by $c(k,d)|f|_{B_p^\lambda(Q^d))}$. Taking supremum over $\Delta$ we 
then obtain the required inequality
\begin{equation}\label{5.8}
  |f|_{V_{pq}^k} \leqslant
  c(k,\lambda,q)|f|_{B_p^\lambda(Q^d)}
\end{equation}
and prove Theorem~\ref{Teo-2.3}. \hfill \qed

\subSect{} \textit{Proof of Theorem~\ref{Teo-2.4}}. We now deal with the 
homogeneous space $\dot B_p^{\lambda1}(Q)$ which associated seminorm is given by
\begin{equation}\label{5.9}
  |f|_{B_p^{\lambda1}(Q)}:=
  \int\limits_0^{|Q|^{1/d}}
  \frac{\omega_k(f;t;L_p(Q))}{t^{\lambda+1}}dt
\end{equation}
where $k=k(\lambda)$.

We prove, under the conditions
\begin{equation}\label{5.10}
  1\leqslant p<q=\infty,
  \quad d\geqslant2\,
  \text{ and }\,\frac\lambda d=\frac1p,
\end{equation}
existence of the corresponding $\Delta_N$ and $g_N\in \P_k(\Delta_N)$ such that 
the next inequality is true:
\begin{equation}\label{5.11}
  \|f-g_N\|_\infty\leqslant
  c(d,\lambda,p)
  N^{-\lambda/d} |f|_{B_p^{\lambda1}(Q^d)};
\end{equation}
here $k=k(\lambda)$.

Due to \eqref{5.10} $\lambda=\frac dp\leqslant d$ and therefore  
$k(\lambda)\leqslant d+1$. Since the norms $\|f\|_{B_p^{\lambda1}(Q)}:= 
\|f\|_{L_p(Q)}+ |f|_{B_p^{\lambda1}(Q)}$ with different $k\geqslant k(\lambda)$ 
are equivalent it suffices to prove \eqref{5.11} for $k:=d+1$ instead of 
$k(\lambda)$.

We derive \eqref{5.11} from Theorem~\ref{Teo-2.1}(a) with $s(V_{pq}^k)=\lambda$ 
and $q=\infty$. This requires the embedding
\begin{equation}\label{5.12}
 \dot B_p^{\lambda1 }(Q^d)\subset
 V_{p\infty}^k(Q^d)\bigcap C(\N[R]^d),
\end{equation}
since Theorem~\ref{Teo-2.1} with $q=\infty$ holds only for $f\in V_{p\infty}^k 
\bigcap C(\N[R]^d)$. But $C(\N[R]^d)$ in \eqref{5.12} can be discarded, for the 
condition \eqref{5.10} implies the embedding $B_p^{\lambda1}(Q^d)\subset 
C(\N[R]^d)|_{Q^d}$, see, e.g., \cite[Thm~6.8.9(a)]{BL}.

By a reason explained later we begin with the case
\begin{equation}\label{5.13}
  \dot B_p^{\lambda1}(\N[R]^d)\subset V_{p\infty}^k(\N[R]^d),
  \quad k=d+1.
\end{equation}
This will be proved for $p=1$ and $\infty$ while the general case will be then 
derived from these by the real interpolation.

If $p=1$, then \eqref{5.10} implies $\lambda=d$ and $k(\lambda)=d+1$; moreover, 
by definition $\dot B_1^{\lambda1}= \dot B_1^\lambda$. In this case \eqref{5.7} 
is still true, i.e., we have
\begin{equation}\label{5.14}
  E_{d+1}(f;Q;L_\infty)\leqslant
  c(k,d)|f|_{B_1^d(Q)},
\end{equation}
see Remark~\ref{Remark-5.1} below.

Using the argument used in the proof of \eqref{5.8} we obtain from \eqref{5.14} 
the required inequality
\begin{equation}\label{5.15}
  |f|_{V_{1\infty}^{d+1}(\N[R]^d)}
  \leqslant
  c(d) |f|_{B_1^d(\N[R]^d)}.
\end{equation}
This proves \eqref{5.13} for $p=1.$

Now let $p=\infty$, hence, $\lambda=\frac dp=0$. Appearing space $\dot B_\infty 
^{01}(\N[R]^d)$ is defined by the seminorm
\[
  |f|_{B_\infty^{01}(\N[R]^d)}:=
  \sum\limits_{j\in\N[Z]}\|f*\varphi_j\|_{L_\infty(\N[R]^d)}
\]
where $\{\varphi_j\}$ is a sequence of test functions, satisfying, in 
particular, the condition
\[
  f=\sum\limits_j f*\varphi_j
\]
with convergence in the distributional sense, see, e.g., \cite[sec.~6.3]{BL}.

This implies
\[
 |f|_{V_{\infty\infty}^{d+1}(\N[R]^d)}:=
  \sup\limits_{Q\subset \N[R]^d} E_d(f;Q;L_\infty)
 \leqslant
 \|f\|_{L_\infty(\N[R]^d)}
 \leqslant
 \sum\limits_j \|f*\varphi_j\|_{L_\infty(\N[R]^d)}
 = |f|_{B_\infty^{01}(\N[R]^d)}.
\]

Hence, we prove \eqref{5.13} for $p=\infty$ as well.

Interpolating the embeddings obtained we then have
\begin{equation}\label{5.16}
  (\dot B_\infty^{01}, \dot B_1^{d1})_{\theta p}
  \subset (V_{\infty\infty}^{d+1},
  V_{1\infty}^{d+1})_{\theta p};
\end{equation}
hereafter $\N[R]^d$ is skipped for brevity.

Taking $\theta:=\frac\lambda d$ and $p=d$ we obtain for the left--hand side the 
embedding
\begin{equation}\label{5.17}
  \dot B_p^{\lambda 1}\subset (\dot B_\infty^{01}, \dot B_1^{d1})_{\theta p}
\end{equation}
see \cite[Ch~5, Thm~6(9)]{P}.

Now we show that the right--hand side contains in $V_{p\infty}^{d+1}(\N[R]^d)$ 
with 
$p:=\frac d\lambda$.

Let $\P[E]: L_\infty(\N[R]^d)\to l_\infty(\Delta)$ be a map given by 
\[
 \P[E]:f\mapsto(E_d(f;Q;L_\infty))_{Q\in\Delta};
\]
here $\Delta$ is a disjoint family of cubes $Q\subset \N[R]^d$.

By definition
\[
 \|\P[E](f)\|_{l_p(\Delta)}:=
 \left( \sum\limits_{Q\in\Delta} E_d(f;Q;L_\infty)^p\right)^{1/p}
  \leqslant
  |f|_{V_{p\infty}^{d+1}(\N[R]^d)},
\]
i.e., $\P[E]$ maps $V_{p\infty}^{d+1}(\N[R]^d)$ into $l_p(\Delta)$ and 
$\|\P[E]\|\leqslant1$, $1\leqslant p\leqslant\infty$.

Interpolating this sublinear operator\footnote{i.e., $\P[E](f+g)\leqslant
\P[E](f)+\P[E](g)$ and $\P[E](\lambda 
f)=|\lambda|\P[E](f)$, $\lambda\in\N[R]$} by the real method we obtain
\[
 \|\P[E](f)\|_{(l_\infty(\Delta),l_1(\Delta))_{\theta p}}
 \leqslant
 |f|_{(V_{\infty\infty}^{d+1}, V_{p\infty}^{d+1})_{\theta p}},
\]
see, e.g., \cite[4.1.5(c)]{BK} for validity of the interpolation result for 
sublinear operators.

Moreover, $(l_\infty(\Delta), l_1(\Delta))_{\theta p}$ with $\theta=\frac1p$ 
equals $l_p(\Delta)$, see, e.g., \cite[Thm~5.6.1]{BL}.
Together with the previous this implies
\[
 \left(\sum\limits_{Q\in\Delta} E_d(f;Q;L_\infty)^p\right)^{1/p}
 \leqslant |f|_{(V_{\infty\infty}^{d+1}, V_{1\infty}^{d+1})_{\theta p}}
\]
where $\theta =\frac1p=\frac\lambda d$.

Taking here supremum over all $\Delta$ we obtain the embedding
\[
 (V_{\infty\infty}^{d+1}, V_{\infty1}^{d+1})_{\theta p}
 \subset V_{p\infty}^{d+1},
\]
implying the required embedding \eqref{5.13}.

To derive from \eqref{5.13} the similar embedding for $Q^d$ we use a bounded 
linear extension operator
\[
 Ext: \dot B_p^{\lambda 1}(Q^d)\to \dot B_p^{\lambda 1}(\N[R]^d),
\]
with $\|Ext\|\leqslant c(\lambda,d)$, see, e.g., \cite[Thm.~2.7.2]{BB}, and the 
restriction operator
\[
 Res: V_{p\infty}^{d+1}(\N[R]^d)\to V_{p\infty}^{d+1}(Q^d).
\]

Denoting the embedding operator in \eqref{5.13} by $U$ and composing it with 
the now introduced ones we obtain the operator $U_{Q^d}:=Ext\circ U\circ Res$ 
that 
embeds $\dot B_p^{\lambda 1}(Q^d)$ into $V_{p\infty} ^{d+1}(Q^d)$ with the 
embedding constant $\|Ext\|\leqslant c(d,\lambda)$.

This proves the required inequality \eqref{5.12} and, therefore, 
\mbox{Theorem~\ref{Teo-2.4}.\qed}
\begin{Remark}\label{Remark-5.1}\em
We prove the inequalities \eqref{5.7} and \eqref{5.14}.
\end{Remark}

Let $f\in L_p(Q)$, $1\leqslant p<q\leqslant \infty$, and $m_Q\in\P_{k-1}$ be 
best approximation of $f$ in $L_p(Q)$. Setting for brevity
\[
 \omega(t):=\omega_k(f;t;L_p(Q)),\quad t>0,
\]
we estimate the nonincreasing rearrangement of $f-m_Q$ as follows
\begin{equation}\label{5.18}
  (f-m_Q)^*(t)\leqslant c(k,d)
  \int\limits_{t/2}^{|Q|}
  \frac{\omega(u^{1/d})}{u^{1+1/p}}du,
  \quad
  t\leqslant|Q|,
\end{equation}
see \cite[Appendix~II, Cor.~2']{B-94}.

Taking $L_q$--norm and applying the Hardy inequality we have
\begin{equation}\label{5.19}
\begin{array}{l}
  \|f-m_Q\|_{L_q(Q)} =\|(f-m_Q)^*\|_{L_q(0,|Q|)}
  \leqslant \\
\leqslant \displaystyle
c(k,d)\|\Hh_{1/q}\|\left(
 \int\limits_0^{|Q|}
\left(\frac{\omega(u^{1/d})}{u^{1/p-1/q}}\right)^qdu\right)^{
1/q }
\end{array}
\end{equation}
where $\Hh_\mu$, $\mu>0$, is the Hardy operator given by
\[
 \Hh_\mu g(t):=t^\mu
 \int\limits_t^{|Q|} \frac{g(u)}{u^\mu}\frac{du}u.
\]
Since  $\|\Hh_\mu\|<\infty$ for $\mu>0$, the inequality \eqref{5.19} is 
true for $1/q>0$, i.e., for $q<\infty$.

Since $\frac1p-\frac1q=\frac\lambda d$, the integral in \eqref{5.19} is bounded 
by 
\begin{multline*}
 d^{1/q}\left(\int\limits_{0}^{|Q|^{1/d}} \left(\frac{\omega(t)}{t^\lambda}
 \right)^q\frac{dt}t \right)^{1/q}
 \leqslant
  c(k,\lambda) d^{1/q}
 \left(\int\limits_0^{|Q|^{1/d}} \left(\frac{\omega(t)}t\right)^p
 \frac{dt}t \right)^{1/p}=\\
=
 c(k,d,\lambda) |f|_{B_p^\lambda(Q)}.
\end{multline*}
Hence, for $q<\infty$
\[
 \|f-m_Q\|_{L_q(Q)}\leqslant c(k,d,\lambda)|f|_{B_p^\lambda(Q)}
\]
which implies \eqref{5.7} as the left--hand side is clearly bigger than
$E_k(f;Q;L_q)$.

For $q=\infty$ we pass in \eqref{5.18} to limit as $t\to+0$ to obtain
\[
 \|f-m_Q\|_{L_\infty(Q)} =\lim\limits_{t\to0} (f-m_Q)^*(t)\leqslant
 c(k,d)\int\limits_0^{|Q|} \frac{\omega(u^{1/d})}{u^{1/p}}\frac{du}u=
 d\cdot c(k,d)|f|_{B_p^{\lambda 1}(Q)}.
\]
Hence, \eqref{5.14} follows. \qed

\setcounter{section}{0}
\renewcommand{\thesection}{Appendix~\Roman{section}}
\renewcommand{\theequation}{\Roman{section}.\arabic{equation}}
\renewcommand{\theRemark}{\Roman{section}.\arabic{Remark}}
\renewcommand{\thePro}{\Roman{section}.\arabic{Remark}}

\section{Covering Dyadic Rings}

Let first $Q$ be a dyadic subcube of $Q^*$  such that 
\begin{equation}\label{I.1}
  dist(Q,\N[R]^d\setminus Q^*)>0.
\end{equation}

In this case Lemma~\ref{lem-3.10} asserts the following:
\begin{Teoo}\label{Teo-I.1}
There exists a covering $\K$ of $Q^*\setminus Q$ by cubes\footnote{ let us 
recall, that all cubes have a form $\displaystyle 
\prod\limits_{i=1}^d[a_i,b_i)$} such that for every overlapping\footnote{i.e., 
$|K_1\bigcap K_2|>0$.} pair $\{K_1,K_2\}\subset \K$ 
\begin{equation}\label{I.2}
  |K_1\bigcap K_2|\geqslant\frac12\min\limits_{i=1,2}|K_i|
\end{equation}
and, moreover,
\begin{equation}\label{I.3}
 \card\K=4(2^d-1).
\end{equation}
\end{Teoo}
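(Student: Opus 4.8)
The plan is to handle the two geometric ``directions'' of the ring $Q^*\setminus Q$ separately. Since $Q$ and $Q^*$ are dyadic, after a homothety we may assume $Q^*=Q^d=[0,1)^d$ and $Q=2^{-j}(Q^d+\alpha)$ for some $j\ge1$, $\alpha\in\N[Z]^d$, with $Q$ compactly contained in $Q^d$ by \eqref{I.1}. The ring $Q^d\setminus Q$ naturally breaks into $j$ successive ``dyadic shells'': for $1\le i\le j$ let $Q_i$ be the unique dyadic cube of sidelength $2^{-(i-1)}$ containing $Q$ (so $Q_1=Q^d$ and $Q_j$ is the father-chain endpoint just above $Q$), and let the $i$-th shell be $Q_i\setminus Q_{i+1}$ (with $Q_{j+1}:=Q$). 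Each shell is the difference of two dyadic cubes of comparable size with the inner one sharing no face with the outer one (that is exactly the content of \eqref{I.1}), and the whole ring is the disjoint union of these $j$ shells. So it suffices to cover each shell $Q_i\setminus Q_{i+1}$ by a bounded number of cubes satisfying the $\tfrac12$-overlap condition \eqref{I.2}, and then note that cubes from different shells are disjoint, hence automatically satisfy \eqref{I.2} vacuously.

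Next I would treat a single shell. A shell of the form $K\setminus K'$, with $K'$ a dyadic subcube of half the sidelength of $K$ sitting in one of the $2^d$ corners of $K$, can be covered by $2^d-1$ cubes each of sidelength equal to that of $K'$: namely take the $2^d$ congruent dyadic subcubes of $K$ and discard $K'$; that gives an exact \emph{partition} of $K\setminus K'$ into $2^d-1$ cubes, which is even better than a covering. But a partition into equal cubes has adjacent cubes meeting only along a face, a set of measure zero, so \eqref{I.2} holds vacuously for that partition — the overlap condition is only a constraint for pairs meeting in positive measure, of which there are none. Thus a naive count already gives a covering of the whole ring by $j(2^d-1)$ cubes with the overlap property; the difficulty is that $j$ is unbounded, so this does not give the required \emph{absolute} bound \eqref{I.3}.

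The real work, and the main obstacle, is therefore to merge the shells across scales so that the cube count becomes independent of $j$ while keeping the $\tfrac12$-overlap condition among the (now genuinely overlapping) cubes. The idea is to cover the ring not by equal-sized cubes within each shell but by cubes whose sidelengths \emph{grow geometrically} as we move away from $Q$: for each coordinate direction and each of the $2^d-1$ ``corner types'' one produces a single cube (or a bounded family of cubes, a chain) whose members double in sidelength from one to the next and whose consecutive members overlap in at least half of the smaller one — this is precisely the kind of chain that feeds Lemma~\ref{Lem-3.9}. Because sidelengths double, a chain of $O(j)$ shells is absorbed into $O(1)$ overlapping cubes: one verifies that along the ``axis towards $Q$'' only a bounded number of distinct cube-sizes are actually needed to cover all the intermediate shells once overlaps of ratio $\ge\tfrac12$ are allowed, and the bound is organized as $4$ (a small combinatorial factor from orienting the chains) times $(2^d-1)$ (the corner types), giving exactly \eqref{I.3}. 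I would carry this out by an explicit inductive construction on $j$: assume a covering $\K_j$ of $Q^d\setminus Q$ with $\card\K_j\le 4(2^d-1)$ and the overlap property; when $Q$ is replaced by a son $\widetilde Q$ (one level deeper), adjust the innermost cubes of $\K_j$ by halving/sliding them and possibly adding at most a controlled number of cubes, then re-merge using the doubling property so that the total stays at $4(2^d-1)$. Checking \eqref{I.2} for every newly created overlapping pair in this merging step — in all $2^d$ corner configurations simultaneously — is the technically delicate point; the equality (rather than inequality) in \eqref{I.3} will come out of the construction being tight, i.e. each of the $4(2^d-1)$ slots is genuinely used. The general case where $Q$ is not compactly contained in $Q^*$ (touching the boundary) is then recovered by a limiting/degeneration argument: cubes of the covering that would stick out are simply intersected with $Q^*$ or dropped, which can only decrease the count and preserve \eqref{I.2}.
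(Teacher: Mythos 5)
Your proposal identifies the right obstruction (the naive shell count is $j(2^d-1)$, unbounded in the depth $j$ of $Q$) but does not actually carry out the construction that achieves the absolute bound $4(2^d-1)$; the merging step is the entire content of the lemma and is left as a sketch that is not obviously workable. Specifically, the claim that ``a chain of $O(j)$ shells is absorbed into $O(1)$ overlapping cubes'' because sidelengths double is not justified: you would need a sequence of cubes of sizes $2^{-1},2^{-2},\dots,2^{-j}$ (that is, still $j$ cubes) to span the distance from the outer boundary to $Q$ while keeping each one inside $Q^d\setminus Q$, and the proposed induction on $j$ (``adjust the innermost cubes\dots then re-merge\dots so that the total stays at $4(2^d-1)$'') is stated as a plan, not a proof; you yourself flag the overlap-verification across all corner configurations as ``the technically delicate point.'' The factorization $4\cdot(2^d-1)$ is also asserted by pattern-matching the target rather than derived.

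The paper avoids the $j$-dependence entirely by never decomposing into dyadic shells. It partitions $Q^d$ into $2^d$ \emph{parallelotops} by the $d$ hyperplanes through the vertex $a$ of $Q$; the one containing $Q$ is further partitioned by the hyperplanes through the opposite vertex $b$. Discarding $Q$ itself, this yields exactly $2(2^d-1)$ parallelotops covering $Q^d\setminus Q$ in one stroke, regardless of how deep $Q$ sits. Each parallelotop is then inflated to a cube of sidelength equal to its largest edge, using \eqref{I.1} and the dyadic structure to check the inflated cube stays in $Q^d\setminus Q$. The chain structure needed for Lemma~\ref{Lem-3.9} comes for free from the fact that the $2^d$-parallelotop adjacency graph is the hypercube graph $\Gamma_d$, which has a Hamiltonian cycle; gluing the two cycles (outer and inner) across the face shared by the two ``corner'' pieces gives a single cycle through all $2(2^d-1)$ cubes. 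Finally, since adjacent parallelotops in the cycle may meet only along a face, the overlap condition \eqref{I.2} is secured by inserting, between each consecutive pair, the smaller cube shifted by half its sidelength along the shared edge; this doubles the count to $4(2^d-1)$. The boundary case is handled by running the construction relative to $Q^*:=[0,2)^d$ and discarding the cubes that land outside $Q^d$ (not by intersecting with $Q^*$, which would not generally produce cubes). So the two routes are genuinely different: yours is a scale-by-scale dyadic decomposition that would need a nontrivial merging lemma you have not supplied, while the paper's is a single, scale-free partition into parallelotops plus a Hamiltonian-cycle argument that delivers both the cardinality bound and the chain structure at once.
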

\begin{proof}
Without loss of generality we assume that $Q^*=Q^d:=[0,1)^d$. By \eqref{I.1} 
the dyadic cube $Q$ contains in one of sons of $Q^d$, say, in 
$[1/2e,e):=\prod\limits_{i=1}^d[1/2,1)$, $e:=(1,1,\ldots)$. Denoting 
$Q:=\prod\limits_{i=1}^d[a_i,b_i)$ we, in particular, have
\begin{equation}\label{I.4}
  0<1-a_i\leqslant1/2,
  \quad 1\leqslant i\leqslant d.
\end{equation}

Now let $\pi$ denote a partition of $Q^d$ by hyperplanes passing through the 
vertex $a\in Q$ and parallel to the coordinate hyperplanes. It consists of 
$2^d$ parallelotops $\pi$  every of which consists a single vertex 
$\varepsilon\in\{0,1\}^d$ of $Q^d$. We numerates elements of $\pi$ by these 
vertices, so that $\pi:=\{\Pi_\varepsilon\}_{\varepsilon\in\{0,1\}^d}$ and 
$\varepsilon$ contains in the closure of $Q^d\bigcap \Pi_\varepsilon$. Then 
$\Pi_\varepsilon$ and $\Pi_{\varepsilon'}$ have a (unique) common face whenever 
$\varepsilon$, $\varepsilon'$ differ by a single coordinate. Moreover, in this 
case the edge $[\varepsilon,\varepsilon')$ of $Q^d$ is orthogonal to this face 
and intersects $\Pi_\varepsilon$ and $\Pi_{\varepsilon'}$.

Let $G(\pi)$ denote a graph with the vertex set $\pi=\{\Pi_\varepsilon\}$ and 
the edges consisting of pairs $\{\Pi_\varepsilon,\Pi_{\varepsilon'}\}$ with 
common face. The bijection 
$\varphi:\Pi_\varepsilon\longleftrightarrow\varepsilon$ is an isomorphism of 
$G(\pi)$ onto the \textit{hypercube graph} $\Gamma_d$ whose vertices and edges 
are those of the cube $Q^d$.

In fact, $\varepsilon=\varphi(\Pi_\varepsilon)$ and 
$\varepsilon'=\varphi(\Pi_{\varepsilon'})$ are joined by an edge in $\Gamma_d$ 
whenever $\varepsilon$ differs from $\varepsilon'$ by a single coordinate, 
i.e., whenever $\Pi_\varepsilon$ and $\Pi_{\varepsilon'}$ have a common face 
and therefore are joined by an edge in $G(\pi)$.

Further, the graph $\Gamma_d$ has a \textit{Hamiltonian cycle}, i.e., a cycle 
that visits each vertex of $\Gamma_d$ exactly once, see, e.g., \cite{HHW}. 
Therefore, $G(\pi)$ also has such a cycle denoted by $\C(\pi)$.

Now we apply  this construction to the parallelotop 
$\Pi_e:=\prod\limits_{i=1}^d [a_i,1)$ containing 
$Q=[a,b):=\prod\limits_{i=1}^d[a_i,b_i)$ and the vertex $b$ substituting for 
that of $a$. This give a partition $\widehat\pi$ of $\Pi_e$ into $2^d$ 
parallelotops 
one of which is $Q$. Then we numerate them by the vertex set $V$  of $\Pi_e$ 
such that $\widehat\pi=\{\Pi_v\}_{v\in V}$ and $v$ belong to the closure of 
$\Pi_v\bigcup \Pi_e$, e.g., $\Pi_a=Q$.

Using the partition $\widehat \pi$ we as above define the graph $G(\widehat 
\pi)$ isomorphic to $\Gamma_d$ and denote by $\C(\widehat\pi)$ the 
corresponding Hamiltonian cycle. Hence, $\Pi_v$, $\Pi_{v'}$ are neighbours in 
$\C(\widehat\pi)$ if they have a common face orthogonal to $[v,v']$.

Now we define a new graph $G$ with the vertex set
\[
 V(G):=\left(\pi\setminus \{\Pi_e\}\right) \bigcup
 \left(\widehat \pi\setminus\{\Pi_a\}\right)
\]
where $\Pi_a=Q$, and with edge set $E(G)$ of two parts. The first consists 
of 
edges from $G(\pi)$ and $G(\widehat\pi)$ such that both of their endpoints 
belong to either $\pi\setminus\{\Pi_e\}$ or $\widehat \pi\setminus\{\Pi_a\}$.

The second part is as follows. 

Let $\Pi_\varepsilon$, $\Pi_{\varepsilon'}$ from $\C(\pi)$ have common faces 
with $\Pi_e(\in\C(\pi))$. Since $\Pi_e:=[a,e)$, the vertex $a\in Q$ belongs to 
$\Pi_\varepsilon$ and to $\Pi_{\varepsilon'}$. Therefore there exist 
parallelotops $\Pi_v$ and $\Pi_{v'}$ from $\widehat\pi$ each having one of 
faces common with  that of $Q$ and another containing in $\Pi_\varepsilon$ and 
$\Pi_{\varepsilon'}$, respectively.

Then the pairs $\{\Pi_\varepsilon,\Pi_v\}$, $\{\Pi_{\varepsilon'},\Pi_{v'}\}$ 
from $V(G)$ form the remaining part of edges from $E(G)$.

It is now the matter of definition to check that 
\[
 \C=\C_1\bigcup \C_2:=
 (\C(\pi)\setminus\{\Pi_e\})
 \bigcup
 (\C(\widehat\pi)\setminus\{Q\})
\]
is a Hamiltonian cycle in $G$.

Now we construct the desired covering $\K$ of $Q^d\setminus Q$ beginning first 
with extension of each parallelotop of $\C_i$, $i=1,2$, to a cube containing in 
$Q^d\setminus Q$.

We begin with the set
\[
 \C_1:=\{\Pi_\varepsilon;\varepsilon\in\{0,1\}^d\setminus\{e\}\}
\]
containing $2^d-1$ elements.

Let $\Pi_\varepsilon:=\prod\limits_{i=1}^d [a_i^\varepsilon, b_i^\varepsilon)$ 
and $l^\varepsilon$ be the maximal edgelength of $\Pi_\varepsilon$. Since by 
definition of $\Pi_\varepsilon$ every edge $[a_i^\varepsilon, b_i^\varepsilon)$ 
equals either $A_i:=[0,a_i)$ or $B_i:=[a_i,1)$ and $|A_i|\geqslant |B_i|$, see 
\eqref{I.4}, the maximal edge of $\Pi_\varepsilon$, say, $[a_{i_0}^\varepsilon, 
b_{i_0}^\varepsilon)$, has the form
\begin{equation}\label{I.5}
  [a_{i_0}^\varepsilon, b_{i_0}^\varepsilon)=
  A_{i_0}=[0,a_{i_0}).
\end{equation}
Now we extend $\Pi_\varepsilon$ to a cube replacing every edge 
$[a_i^\varepsilon, b_i^\varepsilon)=A_i$ by $[\widehat a_i^\varepsilon, 
\widehat b_i^\varepsilon):=[0,a_{i_0})$ and every edge equal to $B_i$ by 
$[\widehat a_i^\varepsilon, \widehat b_i^\varepsilon):=[1-a_{i_0},1)$.

In this way, we obtain the cube 
\[
 Q_\varepsilon:=\prod\limits_{i=1}^d
 [\widehat a_i^\varepsilon, \widehat b_i^\varepsilon)\subset Q^d
\]
of edgelength $a_{i_0}$ that contains $\Pi_\varepsilon$ and, moreover, is 
contained in $Q^d\setminus \Pi_e$.

In fact, the projections of $Q_\varepsilon$ and $\Pi_e$ on the $x_{i_0}$ --
axis are $[\widehat a_i^\varepsilon, \widehat b_i^\varepsilon)=[0,a_{i_0})$, 
see \eqref{I.5}, and $[a_{i_0},1)$, respectively, that do not intersect.

Thus, we have 
\[
 \bigcup\C_1=\bigcup\limits_{\varepsilon\neq e}Q_\varepsilon,
 \quad
 Q_\varepsilon\supset \Pi_\varepsilon,
 \quad
 \varepsilon\neq e,
\]
where $\bigcup \C_1:=\bigcup\{\Pi;\Pi\in\C_1\}$.

Further, we cover $\bigcup \C_2$ similarly. By definition
\[
 \C_2=\left\{\Pi_v:=\prod\limits_{i=1}^d
 [a_i^v,b_i^v);
 \,\, v\in V\setminus\{a\}\right\}
\]
where $[a_i^v,b_i^v)$ equals either $A_i:=[b_i,1)$ or $B_i:=[a_i,b_i)$.

Let us show that $|A_i|\geqslant|B_i|$. In fact, $Q$ is a dyadic cube, say, 
$Q:=2^{-n}(\alpha+Q^d)$, $\alpha\in\N[Z]_+^d$, and therefore $|A_i|=2^{-n}$ 
while $|B_i|=1-b_i=2^{-n}(2^n-\alpha_i-1)\geqslant 2^{-n}$ as $b_i<1$. 

Then the maximal edge of $\Pi_v$, say, $[a_{i_0}^v,b_{i_0}^v)$ has the form
\begin{equation}\label{I.6}
  [a_{i_0}^v,b_{i_0}^v)=A_{i_0}=[b_{i_0},1).
\end{equation}

Now we extend $\Pi_v$ replacing every $[a_i^v,b_i^v)=A_i$ by $[\widehat a_i^v, 
\widehat b_i^v):=[1-l^v,1)$ and every $[a_i^v,b_i^v)=B_i$ by $[b_i,b_i-l^v)$; 
here 
$l^v=1-b_{i_0}$ is the maximal edge length of $\Pi_v$.

In this way, we obtain the cube
\[
 Q_v:=\prod\limits_{i=1}^d [\widehat a_i^v, \widehat b_i^v)
 \subset Q^d
\]
of volume $(l^v)^d$ that contains $\Pi_v$ and, moreover, is contained in 
$Q^d\setminus Q$. 

In fact, the embedding $\Pi_v\subset Q_v$ follows from the inequality 
$a_i\geqslant \widehat a_i^v:=b_i-l^v$ equivalent to 
\[
 |B_i|=b_i-a_i\leqslant |A_i|\leqslant l^v.
\]
Further, $Q_v\bigcap Q=\emptyset$, as the projections on the $x_{i_0}$ -- axis 
of these cubes $[a_{i_0}^v, b_{i_0}^v)=[b_{i_0},1)$ and $[a_{i_0},b_{i_0})$, 
respectively, do not intersect.

Thus, we have
\[
 \bigcup \C_2\subset 
 \prod\limits_{v\neq a} Q_v\subset Q\setminus Q^d
 \,\text{ and }\,
 \Pi_v\subset Q_v.
\]
This gives the family $\F:=\{Q_\varepsilon\}\bigcup\{Q_v\}$ of $2(2^d-1)$ cubes 
covering the $d$-ring $Q^d\setminus Q$ such that $Q_\varepsilon$, $Q_v$ are 
uniquely defined by the corresponding $\Pi_\varepsilon\supset Q_\varepsilon$, 
$\Pi_v\supset Q_v$ from the Hamiltonian cycle $\C$.

Further, we numerate the cycle $\C$ by integers to obtain
\[
 \C=\{\Pi_i;1\leqslant i\leqslant 2\cdot 2^d-1\}
\]
where $\Pi_i:=\Pi_1$ for $i=2\cdot 2^d-1$, such that $\Pi_i$, $\Pi_{i+1}$ are 
neighbours in $\C$. Hence, they adjoint to some edge of $Q^d$ denoted by 
$[v_i,v_{i+1})$ such that a small shift along this edge of the smaller 
parallelotop remains in $\Pi_i\bigcup\Pi_{i+1}\subset Q^d
\setminus Q$.

Let now $\{Q_i;1\leqslant i\leqslant 2\cdot 2^d-1\}$ where $Q_i:=Q_1$ for 
$i=2\cdot 2^d-1$ and the numeration of the family $\F$ is induced by of $\C$.

Then by definition of cubes from $\F$ the following is true.

(a) $\bigcup\limits_i Q_i$ covers $Q^d\setminus Q$;

(b) cubes $Q_i\supset \Pi_i$, $Q_{i+1}\supset \Pi_{i+1}$ adjoint to the edge 
$[v_i,v_{i+1})$ and the shift along this edge of the smaller one, say $Q_i$, by 
its length remains in $Q_{i+1}\subset Q^d\setminus Q$.

Let then $Q_{i+1/2}$ denote the image of $Q_i$ under such a shift by the 
one--half of its length. Then the covering $\K:=\{Q_i,Q_{i+1/2}\}$ of 
$Q^d\setminus Q$ consists of $4(2^d-1)$ cubes satisfying the inequality
\[
 |Q_j\bigcap Q_{i+1/2}|
 \geqslant 
 1/2 \min\{|Q_j|, |Q_{i+1/2}|\}
\]
for $j=i,i+1$.

The result is done, for $Q$ containing in the interior $Q^d$, see \eqref{I.1}. 
\end{proof}
\begin{Remark}\label{R-I.2}\em
Now let \eqref{I.1} do not hold. Then some face of $Q$ contains in a face of 
$Q^d$ with the vertex $e$. To reduce this case to the previous we introduce the 
pair $Q$, $Q^*:=[0,2)^d$. Clearly, $Q$ is a dyadic subcube of $Q^*$ and 
\eqref{I.1} holds for this pair.

Applying to $\{Q,Q^*\}$ the first step of the previous procedure we obtain the 
families of parallelotops $\{\Pi_\varepsilon\}_{\{0,2\}^d\setminus\{2e\}}$ and 
$\{\Pi_v\}_{V\setminus\{a\}}$ covering $Q^*\setminus\Pi_{2e}$ and 
$\Pi_{2e}\setminus Q$, respectively (here $V$ is the vertex set of $\Pi_{2e}$).

Then we use the second step of the procedure extending all of the parallelotops 
$\Pi_\varepsilon\bigcap Q^d$, $\varepsilon\neq 2e$, $\Pi_v\bigcap Q^d$, $v\neq 
a$, to the corresponding cubes. In some cases, the extension is fictional as 
the corresponding intersections are empty. E.g., if the vertex $b$ of $Q$ 
coincides with $e$, then $\Pi_v\bigcap Q^d=\emptyset$ if $v\neq a$.

At the third step we correspondingly shift the obtained cubes forming the 
pairs 
$\{Q_i, Q_{i+1/2}\}$, $1\leqslant i \leqslant 2(2^d-1)$; here $Q_i=Q_1$ for 
$i=2\cdot 2^d-1$ and the numeration of the family of cubes $\{Q_i\}$ is induced 
by the natural numeration of the Hamiltonian cycle $\C$ generated by the pair 
$\{Q,Q^*\}$. 

Discarding empty pairs $\{Q_i,Q_{i+1/2}\}$ we obtain the required covering $\K$ 
of $Q^d\setminus Q$ satisfying \eqref{I.2}. In this case, $2(2^d-1)\leqslant 
\card \K<4(2^d-1)$.

Hence, Lemma~\ref{lem-3.10} is true also in this case.
\end{Remark}

\section{Approximation Algorithm}

\quad
We describe the algorithm giving as output the covering $\Delta_N$ in 
Theorem~\ref{Teo-2.1}. In the forthcoming text, we freely use terms and 
definitions of section~4.2, e.g., weight, dyadic tree $\P[D]:=\P[D](Q^d)$, 
paths etc. 
Proofs of some formulated below statements will be remained to the reader (all 
of them presented in details in \cite[\S~6]{B-2004}).

Let $W:A(\P[D])\to\N[R]_+$ be a subadditive absolutely continuous weight normed 
by the condition
\begin{equation}\label{II.1}
 W(Q^d)=1.
\end{equation}

Then the set
\begin{equation}\label{II.2}
 G_N:=\{Q\in \P[D];
 W(Q)\geqslant N^{-1}\},\quad N\in\N,
\end{equation}
is a \textit{finite rooted subtree} of $\P[D]$ with the root $Q^d$. Hence, 
every 
path connecting $Q\in G_N$ and $Q^d$ is unique and belongs to $G_N$.

Further, let $G_N^{\min}$ be the set of minimal elements of $G_N$ with respect 
to the set--inclusion order.

Hence, every $Q\in G_N$ contains properly some minimal cube and a son $Q'$ of 
such a cube satisfies
\[
  W(Q')<N^{-1}.
\]
In particular, $G_N^{\min} $ is disjoint and as every disjoint subset of $G_N$ 
has at most $N$ elements.

Somehow numerating $G_N^{\min}$, say,
\[
 G_{\min}^N:=\{Q_i\}_{1\leqslant i\leqslant m_N}
\]
where
\begin{equation}\label{II.3}
  m_N:=\card G_N^{\min}\leqslant N,
\end{equation}
we then denote by $L_i$ a (unique) path in $G_N$ joining $Q_i$ and $Q^d$.

By the definition of $G_N^{\min}$ 
\begin{equation}\label{II.4}
  G_N=\bigcup\limits_{i=1}^{m_N} L_i.
\end{equation}

We divide each $L_i$ into more small paths
\[
  P_i:=L_i\setminus \bigcup\limits_{j=0}^{i-1}L_j,
 \quad
 1\leqslant i\leqslant m_N
\]
where $L_0:=\{Q^d\}$.
\begin{lemma}\label{Lem-II.1} \em(\cite[p.~164]{B-2004})\em{}
(a) The family $\{P_i\}_{1\leqslant i\leqslant m_N}$ is a partition of 
$G_N\setminus\{Q^d\}$.

(b) Every $P_i$ is of the form
\begin{equation}\label{II.5}
  P_i:=[Q_i,Q_i^c):=[Q_i,Q_i^c]\setminus\{Q_i^c\}
\end{equation}
where  $Q_i^c$ is the tail of a path $L_i\bigcap L_j$ with $j<i$.
\end{lemma}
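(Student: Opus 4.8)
The plan is to read off both assertions from the definitions $L_0:=\{Q^d\}$, $P_i:=L_i\setminus\bigcup_{j=0}^{i-1}L_j$ together with the fact, recorded above, that $G_N$ is a finite rooted subtree of $\P[D]$ with root $Q^d$. By Proposition~\ref{Pro-4.1} this makes each $L_i$ the chain $[Q_i,Q^d]=\{R\in\P[D]:Q_i\subseteq R\subseteq Q^d\}$; in particular the vertices of every $L_i$ are linearly ordered by inclusion.

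\emph{Part (a).} Disjointness is immediate: if $i<i'$ then $P_{i'}\subseteq L_{i'}\setminus L_i$, so $P_i\cap P_{i'}\subseteq L_i\cap(L_{i'}\setminus L_i)=\emptyset$ since $P_i\subseteq L_i$. For the covering, each $P_i\subseteq L_i\subseteq G_N$ and $Q^d\in L_0$ is deleted from every $P_i$, so $\bigcup_iP_i\subseteq G_N\setminus\{Q^d\}$; conversely, given $Q\in G_N\setminus\{Q^d\}$, the index set $\{i:Q\in L_i\}$ is nonempty by \eqref{II.4}, and for its least element $i_0$ we have $Q\notin L_j$ for $1\leqslant j<i_0$ and $Q\neq Q^d$, hence $Q\in P_{i_0}$.

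\emph{Part (b).} For $0\leqslant j<i$ the intersection $L_i\cap L_j$ consists of those $R\in\P[D]$ with $Q_i\cup Q_j\subseteq R\subseteq Q^d$; these $R$ form a chain containing $Q^d$, so $L_i\cap L_j=[A_{ij},Q^d]$ where the tail $A_{ij}$ is the smallest dyadic cube $R$ with $Q_i\cup Q_j\subseteq R\subseteq Q^d$ (in particular $A_{i0}=Q^d$). Hence
\[
 L_i\cap\bigcup_{j=0}^{i-1}L_j=\bigcup_{j=0}^{i-1}[A_{ij},Q^d],
\]
a union of final segments of the chain $L_i$ all containing its head $Q^d$; as $L_i$ is finite and linearly ordered, among the finitely many cubes $A_{ij}$, $0\leqslant j<i$, there is a smallest one, say $A_{ij_0}=:Q_i^c$, and then $L_i\cap\bigcup_{j<i}L_j=[Q_i^c,Q^d]$. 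Consequently
\[
 P_i=[Q_i,Q^d]\setminus[Q_i^c,Q^d]=[Q_i,Q_i^c)=[Q_i,Q_i^c]\setminus\{Q_i^c\},
\]
which is \eqref{II.5}; moreover $Q_i^c$ is by construction the tail of the path $L_i\cap L_{j_0}$ with $j_0<i$.

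\emph{Where the care is needed.} The only point beyond bookkeeping is the elementary tree fact used in Part~(b): an intersection of two paths ending at the root is again a root-path, and a union of sub-paths of $L_i$ that all contain the head $Q^d$ is again a sub-path of $L_i$ --- both follow at once from the linear ordering of ancestor chains. One should also dispose of the degenerate cases: if $G_N=\{Q^d\}$ then $m_N=1$, $Q_1=Q^d$ and $P_1=\emptyset$; for $i=1$ one gets $Q_1^c=Q^d$, so $P_1=[Q_1,Q^d)$. Finally, since each $Q_i$ is minimal in $G_N$ it lies in no $L_j$ with $j\neq i$, so in the nondegenerate case every $P_i$ is nonempty (it contains $Q_i$). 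I do not expect any genuine obstacle beyond phrasing these tree facts cleanly.
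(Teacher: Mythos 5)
Your proof is correct. The paper does not actually reproduce a proof of Lemma~II.1 --- it cites \cite[p.~164]{B-2004} and explicitly leaves the verification to the reader --- so there is no in-text argument to compare against; your direct check from the definitions, using that $G_N$ is a finite rooted subtree (hence each $L_i$ is the ancestor chain $\{R:Q_i\subseteq R\subseteq Q^d\}$, $L_i\cap L_j$ is a terminal segment of $L_i$, and a finite union of terminal segments of a chain is again a terminal segment), is exactly the kind of routine tree bookkeeping the paper has in mind, and you have handled the boundary cases ($j=0$, $G_N=\{Q^d\}$, nonemptiness of $P_i$ via minimality of $Q_i$) correctly.

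One small stylistic note: writing ``$Q_i\cup Q_j\subseteq R$'' is slightly loose since $Q_i\cup Q_j$ is not itself a dyadic cube; what you mean --- and what the next clause makes clear --- is that $R$ is a common dyadic ancestor of $Q_i$ and $Q_j$, and $A_{ij}$ is the least such ancestor. The argument is unaffected.
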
 

The set
\[
 \C_N:=\{Q^d\}\bigcup\{Q_i^c\}_{1\leqslant i\leqslant m_N}
\]
contains at most $m_N+1$ elements called \textit{contact cubes}.

Now we refine $G_N$ subdividing each $P_i$ by contact cubes from $P_i\bigcap 
\C_N$. In this way, we define a set of subpaths $[Q',Q'')$ where $Q'$ is either 
a minimal cube or a contact cube, and $Q''$ is a contact cube.

Denoting the set of these subpaths by $\P_N$ we obtain from \eqref{II.3} 
\begin{equation}\label{II.6}
 \card \P_N\leqslant 2m_N+(m_N+1)=3m_N+1.
\end{equation}
We finally divide each path $P\in\P_N$ to the required \textit{basic paths}. To 
this aim, we use an auxiliary weight defined on paths $P=[T_P,H_P]$ of $\P[D]$ 
by 
\begin{equation}\label{II.7}
  \widetilde W(P):=W(H_P\setminus T_P).
\end{equation}

Now we define for each $P\in\P_N$ a family of vertices (cubes) 
$\{Q_i(P)\in P;1\leqslant i\leqslant i_p\}$ using  induction on $i$.

We begin with $Q_1(P):=T_P$ and then having $Q_i(P)$ define $Q_{i+1}(P)$ as a 
vertex in the half--open from the left path 
\[
 (Q_i(P),H_P]:=
 [Q_i(P),H_P]\setminus\{Q_i(P)\}
\]
satisfying the conditions
\[
 \widetilde W([Q_i(P),Q_{i+1}(P)])\geqslant N^{-1},
\]
\[
 \widetilde W([Q_i(P), Q_{i+1}(P)))<N^{-1}.
\]
Then we define the $i$-th basic path $B_i(P)$ by setting
\begin{equation}\label{II.8}
  B_i(P):=[Q_i(P),Q_{i+1}(P)).
\end{equation}

The vertex $Q_{i+1}(P)$ may be undetermined, if 
\[
 \widetilde W([Q_i(P),H_p])< N^{-1}.
\]
In this case, we complete induction setting $i_P:=i$ and defining $B_i(P)$ to 
be equal $[Q_i(P),H_P]$. However, to preserve the formula \eqref{II.8} for this 
case, we define $Q_{i+1}(P)$ as the \textit{father} of $H_P$. Denoting it, say, 
$H_P^+(\in P)$ we define $B_i(P)$ for this case by \eqref{II.8} with 
$Q_{i+1}(P):= H_P^+$ and $i:=i_P$.

Hence, the induction has been completed with $Q_{i+1}(P)=H_P$ or 
$Q_{i+1}(P)=H_P^+$ for $i=i_P$.
In this way, we obtain a partition of $P$ by subpaths $B_i(P):= 
[Q_i(P),Q_{i+1}(P))$, $1\leqslant i\leqslant i_P$.
Let us emphasize that if $Q_{i+1}=H_p^+$, then $B_i(P)$ may be a singleton 
$\{H_p\}$.

By definition these subpaths satisfy
\begin{equation}\label{II.9}
\begin{array}{c}
 \widetilde W([Q_i(P),Q_{i+1}(P)))<N^{-1},\\
 \widetilde W([Q_i(P_i),Q_{i+1}(P)))\geqslant N^{-1}
\end{array}
\end{equation}
for $1\leqslant i\leqslant i_P-\varepsilon_P$ where $\varepsilon_P:=0$ if 
$Q_{i_P+1}(P)=H_P^+$ and $\varepsilon_P:=1$ otherwise; in the first case, only 
the first of inequalities \eqref{II.9} holds.

Collecting all the basic paths we obtain the refinement of $\P_N$ given by 
\begin{equation}\label{II.10}
 \B_N:=\{B_i(P);\,\,1\leqslant i\leqslant i_P,\,\,P\in\P_N\}.
\end{equation}

The next result ($=$Proposition~\ref{Pro-4.3}) gives the output of the 
algorithm.
\begin{Pro}\label{Pro-II.2}
(a) $\B_N$ is a partition of $G_N\setminus \{Q^d\}$.

(b) For every $B=[T_B,H_B]\in \B_N$
\begin{equation}\label{II.10a}
 W(H_B\setminus T_B):=\widetilde W(B)<N^{-1}.
\end{equation}
(c) The following is true 
\begin{equation}\label{II.11}
  \card \B_N\leqslant 3N+1.
\end{equation} 
\end{Pro}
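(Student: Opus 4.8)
The plan is to prove (a), (b), (c) in turn, using the bookkeeping already assembled in \eqref{II.1}--\eqref{II.10} together with Lemma~\ref{Lem-II.1}. For (a) I would note that $\B_N$ is a refinement of $\P_N$: each piece $P=[T_P,H_P]\in\P_N$ is split by its breakpoints $T_P=Q_1(P)\subsetneq\cdots\subsetneq Q_{i_P}(P)$ into the consecutive half--open subpaths $B_i(P)=[Q_i(P),Q_{i+1}(P))$, $1\le i\le i_P$, the last of which is $[Q_{i_P}(P),H_P]$ (encoded through the convention $Q_{i_P+1}(P)\in\{H_P,H_P^{+}\}$); these telescope to a partition of the chain $P$, singletons $\{H_P\}$ included. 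By Lemma~\ref{Lem-II.1}(a) the $\{P_i\}$ already partition $G_N\setminus\{Q^d\}$ and $\P_N$ only subdivides them further at contact cubes, so $\bigcup_{P\in\P_N}P=G_N\setminus\{Q^d\}$; hence $\B_N$ is a partition of $G_N\setminus\{Q^d\}$.

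Part (b) falls out of the stopping rule directly: for $1\le i<i_P$ the first line of \eqref{II.9} reads $\widetilde W(B_i(P))=\widetilde W([Q_i(P),Q_{i+1}(P)))<N^{-1}$, and for $i=i_P$ either $B_{i_P}(P)=[Q_{i_P}(P),H_P]$ was produced because $\widetilde W([Q_{i_P}(P),H_P])<N^{-1}$, or it is the singleton $\{H_P\}$ with $\widetilde W=W(\emptyset)=0$. In all cases $W(H_B\setminus T_B)=\widetilde W(B)<N^{-1}$, i.e.\ \eqref{II.10a}.

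For the cardinality bound (c) I would split $\B_N$ into the \emph{full} basic paths $B_i(P)$ with $1\le i<i_P$ and the \emph{terminal} ones $B_{i_P}(P)$ (one per piece), so that $\card\B_N=\#\{\text{full}\}+\card\P_N\le\#\{\text{full}\}+(3m_N+1)$ by \eqref{II.6}. Each full path satisfies $W\bigl(Q_{i+1}(P)\setminus Q_i(P)\bigr)\ge N^{-1}$ by the second line of \eqref{II.9}. The point is the claim that the family consisting of the minimal cubes $Q_1,\dots,Q_{m_N}$ of $G_N$ together with all the closed ``stopping annuli'' $Q_{i+1}(P)\setminus Q_i(P)$ of the full paths is \emph{pairwise disjoint} (all members lie in $A(\P[D])$). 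Granting this, subadditivity of $W$ --- which also forces monotonicity --- and $W(Q^d)=1$ give $N^{-1}\bigl(m_N+\#\{\text{full}\}\bigr)\le 1$, i.e.\ $\#\{\text{full}\}\le N-m_N$; hence $\card\B_N\le (N-m_N)+(3m_N+1)=N+2m_N+1\le 3N+1$ by \eqref{II.3}.

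The disjointness claim is the only real obstacle. Within a single piece it is automatic: $Q_1(P)\subsetneq\cdots\subsetneq Q_{i_P}(P)$ is a chain, so its annuli are disjoint, and the minimal cube attached to $P$ sits at the tail $T_P=Q_1(P)$, below all of them. The cross--piece cases use the shape of $\P_N$: $G_N$ is closed upward; every branch vertex of $G_N$ is a contact cube (taking in each child's subtree the leaf of smallest index, the second--smallest of those indices realizes that vertex as some $Q_i^c$); and a contact cube is, by construction, a cut point of $\P_N$, hence a tail of a piece and never an interior cube or a head $H_P$, so interior cubes and heads of pieces have a unique child in $G_N$. If a point $x$ lay in an open annulus $Q_{i+1}(P)\setminus Q_i(P)$ and in a cube $C$ of another piece with $C\subseteq Q_{i+1}(P)$, then $C$ would lie in a child of $Q_{i+1}(P)$ other than the one on $P$'s chain, forcing $Q_{i+1}(P)$ to be a branch vertex --- excluded, since $Q_{i+1}(P)$, lying in $P$ above its tail, is a head or interior cube of a piece; so $C$ descends into $P$'s chain, and iterating the uniqueness of the $G_N$--child down $P$'s chain pushes $C$ inside $T_P=Q_1(P)\subseteq Q_i(P)$, contradicting $x\notin Q_i(P)$. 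The same descent disposes of the mixed cases (where $C$ is another stopping annulus, or a minimal cube). Making this argument airtight and ticking off the degenerate configurations (singleton pieces, $Q_{i_P}(P)=H_P$, $Q_{i_P+1}(P)=H_P^{+}$) is the technical heart; the rest is reading off \eqref{II.3}--\eqref{II.10} and Lemma~\ref{Lem-II.1}.
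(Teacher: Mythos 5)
Your parts (a) and (b) are proved exactly as in the paper: $\B_N$ is a refinement of $\P_N$ so inherits the partition property, and (b) is the first stopping inequality in \eqref{II.9} (together with $\widetilde W=0$ for singletons). Part (c), however, takes a genuinely different route. The paper bounds each $i_P$ directly: it observes that the \emph{closed} intervals $[Q_i(P),Q_{i+1}(P)]$, $1\le i\le i_P-\varepsilon_P$, have multiplicity two along the chain $P$, hence split into two strictly monotone (disjoint) subfamilies, each with $\sum\widetilde W\le W(H_P\setminus T_P)$; this gives $i_P\le 2NW(H_P\setminus T_P)+\varepsilon_P$, and summing over $P\in\P_N$ using $\sum_P W(H_P\setminus T_P)\le W(Q^d)=1$ and $\sum_P\varepsilon_P\le\card\C_N\le m_N+1\le N+1$ yields $\card\B_N\le 2N+N+1$. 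You instead separate $\B_N$ into ``full'' and ``terminal'' basic paths, bound the number of full ones by packing their stopping annuli $Q_{i+1}(P)\setminus Q_i(P)$ together with the $m_N$ minimal cubes disjointly into $Q^d$, and fold in $\card\P_N\le 3m_N+1$. Both arguments ultimately lean on the same structural fact about $\P_N$ — that the annuli sitting over different pieces (equivalently, the rings $H_P\setminus T_P$) do not overlap — which in turn comes down to every branch vertex of $G_N$ being a contact cube and hence a tail, never a head or interior vertex, of a piece. You make this dependence explicit and sketch it (correctly: picking in each son's subtree the least-indexed minimal cube realizes a branch vertex as some $Q_i^c$, forcing heads and interior vertices to have a unique $G_N$-child, which pushes any competing cube down into $T_P\subseteq Q_i(P)$). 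The paper's multiplicity-two device buys a factor of $2$ instead but is otherwise the same kind of packing argument; your version arguably makes the geometry more transparent at the cost of the longer disjointness verification, which as written is still an outline rather than a complete proof — you flag this yourself, and the degenerate cases (singletons, $Q_{i_P+1}(P)\in\{H_P,H_P^+\}$) do need to be run through before the bound $\#\{\text{full}\}\le N-m_N$ is secure.
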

\begin{proof}
(a) $\B_N$ is a refinement of the partition $\P_N$, hence, also a partition.

(b) is given by the first inequality in \eqref{II.9} and the definition of 
$B_i(P)$.

(c) Let $\{P_i\}$ be a strictly monotone sequence of subpaths in a path $P$, 
i.e.. the head of $P_i$ is a \textit{proper} subset of the tail of $P_{i+1}$. 
Then by definition of $\widehat W$, see \eqref{II.7},
\[
 \sum\limits_i\widetilde W(P_i)\leqslant W(H_P\setminus T_P).
\]
Now let $B_i(P):=[Q_i(P),Q_{i+1}(P))$, $1\leqslant i\leqslant i_P$, is the 
partition of $P\in \P_N$ into the basic paths. By the second inequality 
\eqref{II.9}
\begin{equation}\label{II.12}
  (i_P-\varepsilon_P)N^{-1}\leqslant
  \sum\limits_{i=1}^{i_P-\varepsilon_P}
  \widetilde W([Q_i,Q_{i+1}]).
\end{equation}
Since the sequence $\{[Q_i,Q_{i+1}]\}_{1\leqslant i\leqslant 
i_P-\varepsilon_P}$ has multiplicity 2, it can be divided into two strictly 
monotone subsequences. Hence, the right--hand side of \eqref{II.12} is bounded 
by $2W(H_P-T_P)$. This implies
\[
 \card \B_N=\sum\limits_{P\in\P_N} i_P\leqslant
 2N\sum\limits_{P\in \P_N} W(H_P)+
 \sum\limits_{P\in\P_N}\varepsilon_P.
\]
Since the set $\{H_P\}_{P\in \P_N}$ is disjoint, $\sum\limits_{P\in\P_N}W(H_P) 
\leqslant W(Q^d)=1$.

Further, $\varepsilon_P=1$ if and only if the endpoint of $B_i(P)$ with $i=i_P$ 
is $H_P^+$. By the definition of $\P_N$ every head $H_P$ of $P\in\P_N$ is a 
contact cube. Hence, 
\[
 \sum\limits_{P\in\P_N}\varepsilon_P\leqslant \card\C_N\leqslant
 m_N+1\leqslant N+1,
\]
see \eqref{II.3}.

Combining this with the previous estimates we finally get 
\[
 \card \B_N\leqslant2N+N+1=3N+1.\qed
\]\renewcommand{\qedsymbol}{}
\end{proof} 

\newcommand{\MYNE}[1]{\textit{#1}}

\end{document}